\numberwithin{equation}{section}
\newtheorem{de}{Definition}[section]
\newtheorem{thm}[de]{Theorem}
\newtheorem{cor}[de]{Corollary}
\newtheorem{prop}[de]{Proposition}
\newtheorem{lem}[de]{Lemma}
\newtheorem{rem}[de]{Remark}
\newcommand\E{\mathcal{E}}
\newcommand\F{\mathcal{F}}
\newcommand\V{\mathcal{V}}
\newcommand\D{\mathcal{D}}
\newcommand\G{\mathcal{G}}
\newcommand\h{\mathcal{H}}
\newcommand\s{\mathfrak{S}}
\newcommand\K{\mathfrak{K}}
\newcommand\N{\mathbb{N}}
\newcommand\grho{\boldsymbol{\rho}}
\renewcommand{\textbf}[1]{\begingroup\bfseries\mathversion{bold}#1\endgroup}
\DeclareMathOperator*{\dive}{div}
\DeclareMathOperator*{\Rn}{\mathbb{R}^\textit{n}}
\DeclareMathOperator*{\R}{\mathbb{R}}
\title{On some non linear evolution systems which are perturbations of Wasserstein gradient flows}
\author {Maxime Laborde \thanks{\scriptsize CEREMADE, UMR CNRS 7534, Universit\'e Paris IX
Dauphine, Pl. de Lattre de Tassigny, 75775 Paris Cedex 16, FRANCE
\texttt{laborde@ceremade.dauphine.fr}.}}
\begin{document}

\maketitle

\begin{abstract}
This paper presents existence and uniqueness results for a class of parabolic systems with non linear diffusion and nonlocal interaction. These systems can be viewed as regular perturbations of Wasserstein gradient flows. Here we extend results known in the periodic case (\cite{CL}) to the whole space and on a smooth bounded domain. Existence is obtained using a semi-implicit Jordan-Kinderlehrer-Otto scheme and uniqueness follows from a displacement convexity argument. 

\end{abstract}
\section{Introduction.}

In this paper we study existence and uniqueness of solutions for systems of the form
\begin{eqnarray}
\label{systeme}
 \left\{\begin{array}{ll}
 \partial_t \rho_i -  \dive(\rho_i \nabla (V_i[\boldsymbol{\rho}])) + \alpha_i \dive(\rho_i\nabla F'_{i}(\rho_i)) =0 & \text{   on } \mathbb{R}^+ \times \Omega,\\

\rho_i(0, \cdot)= \rho_{i,0} &\text{   on } \Omega,\\
\end{array}\right.
\end{eqnarray}
where $i \in [\![1,l]\!]$ ($l \in \mathbb{N}^*$), $\Omega= \Rn$ or is a bounded set of $\Rn$ and $\grho:=(\rho_1,\dots,\rho_l)$ is a collection of densities. Our motivation for this system comes from its appearance in modeling interacting species. 

\smallskip
In the case of $\nabla (V_i[\boldsymbol{\rho}])=0$ or $V_i[\grho]$ does not depend of $\grho$, this system can be seen as a gradient flow in the product Wasserstein space i.e $\nabla F'_i(\rho_i)$ can be seen as the first variation of a functional $\F_i$ defined on measures. This theory started with the work of Jordan, Kinderlehrer and Otto in \cite{JKO} where they discovered that the Fokker-Planck equation can be seen as the gradient flow of $\int_{\Rn} \rho\log\rho + \int_{\Rn} V\rho$. The method that they used to prove this result is often called JKO scheme. Now, it is well-known that the gradient flow method permits to prove the existence of solution under very weak assumptions on the initial condition for several evolution equations, such as the heat equation \cite{JKO}, the porous media equation \cite{O}, degenerate parabolic equations \cite{A}, Keller-Segel equation \cite{BCC}. The general theory of gradient flow has been very much developed and is detailed in the book of Ambrosio, Gigli and Savaré, \cite{AGS}, which is the main reference in this domain.

\smallskip
However, this method is very restrictive if we want to treat the case of systems with several interaction potentials. Indeed, Di Francesco and Fagioli show in the first part of \cite{DFF} that we have to take the same (or proportional) interaction potentials, of the form $V[\rho]=W\ast \rho$ for all densities. They prove an existence/uniqueness result of \eqref{systeme} using gradient flow theory in a product Wasserstein space without diffusion ($\alpha_i=0$) and with $l=2$, $V_1[\rho_1,\rho_2]:=W_{1,1} \ast \rho_1 + W_{1,2} \ast \rho_2$ and $V_2[\rho_1,\rho_2]:=W_{2,2} \ast \rho_2 + W_{2,1} \ast \rho_1$ where $ W_{1,2}$ and $W_{2,1}$ are equals or proportionals. Nevertheless in the second part of \cite{DFF}, they introduce a new semi-implicit JKO scheme to treat the case where $ W_{1,2}$ and $W_{2,1}$ are not proportional. In other words, they use the usual JKO scheme freezing the measure in $V_i[\grho]$. 

\smallskip
The purpose of this paper is to add a nonlinear diffusion in the system studied in \cite{DFF}. Unfortunately, this term requires strong convergence to pass to the limit. This can be obtained using an extension of Aubin-Lions lemma proved by Rossi and Savaré in \cite{RS} and recalled in theorem \ref{Rossi-Savaré}. This theorem requires separately time-compactness and space compactness to obtain a strong convergence in $L^m((0,T)\times \Omega)$. The time-compactness follows from classical estimate on the Wasserstein distance in the JKO scheme. The difficulty is to find the space-compactness. This problem has already been solved in \cite{CL} in the periodic case using the semi-implict scheme of \cite{DFF}. In this paper we extend this result on the whole space $\Rn$ or on a smooth bounded domain. On the one hand in $\Rn$, we will use the same argument than in \cite{CL}. We use the powerful flow interchange argument of Matthes, McCann and Savaré \cite{MMCS} and also used in the work of Di Francesco and Matthes \cite{DFM}. The differences with the periodic case are that functionals are not, a priori, bounded from below and we can not use Sobolev compactness embedding theorem. On the other hand in a bounded domain, the flow interchange argument is very restrictive because it forces us to work in a convex domain and to impose some boundary condition on $V_i[\grho]$. To avoid these assumptions, we establish a $BV$ estimate to obtain compactness in space and then to find the strong convergence needed.

\smallskip

The paper is composed of seven sections. In section \ref{main result}, we start to recall some facts on the Wasserstein space and we state our main result, theorem \ref{existence Rn}. Sections \ref{si JKO}, \ref{flow interchange} and \ref{limit} are devoted to prove theorem \ref{existence Rn}. In section \ref{si JKO}, we introduce a semi-implict JKO scheme, as in \cite{DFF}, and resulting standard estimates. Then, in section \ref{flow interchange}, we recall the flow interchange theory developed in \cite{MMCS} and we find a stronger estimate on the solution's gradient, which can be done by differentiating the energy along the heat flow. In section \ref{limit}, we establish convergence results and we prove theorem \ref{existence Rn}. Section \ref{borné} deals with the case of a bounded domain. In the final section \ref{uniqueness}, we show uniqueness of \eqref{systeme} using a displacement convexity argument. 

\section{Wasserstein space and main result.}
\label{main result}
Before stating the main theorem, we recall some facts on the Wasserstein distance.
\paragraph{The Wasserstein distance.}

We introduce $$ \mathcal{P}_{2}(\Rn) :=\left\{ \mu \in \mathcal{M}(\Rn ; \mathbb{R}^+) \, : \, \int_{\Rn} \,d\mu =1 \text{ and } M(\mu):= \int_{\Rn} |x|^2 d \mu(x) < +\infty \right\},$$
and we note $\mathcal{P}_{2}^{ac}(\Rn)$ the subset of $\mathcal{P}_{2}(\Rn)$ of probability measures on $\Rn$ absolutely continuous with respect to the Lebesgue measure. The Wasserstein distance of order $2$, $W_2(\rho,\mu)$, between $\rho$ and $\mu$ in $\mathcal{P}_{2}(\Rn)$, is defined by
$$ W_2(\rho,\mu):= \inf_{\gamma \in \Pi(\rho,\mu)} \left( \int_{\Rn \times \Rn} |x-y|^2 \,d\gamma(x,y) \right)^{1/2},$$
where $\Pi(\rho,\mu)$ is the set of probability measures on $\Rn\times \Rn$ whose first marginal is $\rho$ and second marginal is $\mu$. It is well known that $\mathcal{P}_{2}(\Rn)$ equipped with $W_2$ defines a metric space (see for example \cite{S,V1,V2}). Moreover if $\rho \in \mathcal{P}_{2}^{ac}(\Rn)$ then $W_2(\rho, \mu)$ admits a unique optimal transport plan $\gamma_T$ and this plan is induced by a transport map, i.e $\gamma_T=(Id\times T)_{\#}\rho$, where $T$ is the gradient of a convex function (see \cite{B}).\\
Now if $\grho,\boldsymbol{\mu} \in \mathcal{P}_{2}(\Rn)^l$, we define the product distance by
$$ W_2(\grho,\boldsymbol{\mu}) := \sum_{i=1}^l W_2(\rho_i,\mu_i),$$ 
or every equivalent metric as $W_2^2(\grho,\boldsymbol{\mu}) := \sum_{i=1}^l W_2^2(\rho_i,\mu_i)$.
We can define also the $1$-Wasserstein distance by
$$ W_1(\rho,\mu):= \inf_{\gamma \in \Pi(\rho,\mu)} \left( \int_{\Rn \times \Rn} |x-y| \,d\gamma(x,y) \right),$$
and the Kantorovich duality formula (see \cite{V1,V2,S}) gives 
$$ W_1(\rho,\mu)=\sup\left\{ \int_{\Rn} \varphi \,d(\rho-\mu)\; : \; \varphi \in L^1(d|\rho-\mu|)\cap Lip_1(\Rn) \right\},$$
with $Lip_1$ is the set of $1$-Lipschitz continuous functions. Then for all $\rho,\mu \in  \mathcal{P}_{2}(\Rn)$ and $\varphi \in Lip(\Rn)$ we have
\begin{eqnarray}
\label{Lipschitz W2}
 \int_{\Rn} \varphi \,d(\rho-\mu) \leqslant CW_1(\rho,\mu) \leqslant CW_2(\rho,\mu).
\end{eqnarray}

\paragraph{Main result.}
Let $l \in \mathbb{N}^*$ and for all $i \in [\![ 1 ,l ]\!]$, we define $V_i \, :\, \mathcal{P}(\Rn)^l \rightarrow \mathcal{C}^{2}(\Rn)$ continuous such that:\\

\begin{itemize}
\item For all $\grho=(\rho_1, \dots, \rho_l) \in \mathcal{P}(\Rn)^l$,
\begin{align}
\label{Hyp 1 V}
V_i[\boldsymbol{\rho}] \geqslant 0,
\end{align}

\item There exists $C>0$ such that for all $\grho \in \mathcal{P}(\Rn)^l$, $x\in \Rn$,
\begin{align}
\label{Hyp 2 V}
 \|  \nabla (V_i[\grho])  \|_{L^\infty(\Rn)} + \|  D^2 (V_i[\grho])  \|_{L^\infty(\Rn)} \leqslant C ,
\end{align}
i.e $V_i[\grho]$ and $\nabla (V_i[\grho])$ are Lipschitz functions and the Lipschitz constants do not depend on the measure.\\
\item There exists $C>0$ such that for all $\boldsymbol{\nu}, \boldsymbol{\sigma} \in \mathcal{P}(\mathbb{R}^n)^l$,
\begin{align}
\label{Hyp 3 V}
\| \nabla(V_i[\boldsymbol{\nu}]) -\nabla(V_i[\boldsymbol{\sigma}]) \|_{L^\infty(\Rn)} \leqslant C  W_2 (\boldsymbol{\nu} ,\boldsymbol{\sigma}).
\end{align} 
\end{itemize} 

\begin{rem}
The assumption \eqref{Hyp 1 V} can be replaced by $V_i[\grho]$ is bounded by below for all $\grho$.
\end{rem}
\bigskip
Let $m \geqslant 1$, we define the class of functions  $\h_m$ by 
$$ \h_m:= \{x\mapsto x\log(x)\}  \text{   if } m=1,$$
and, if $m >1$, $\h_m$ is the class of strictly convex superlinear functions $F \, : \, \R^+ \rightarrow \R$ which satisfy 
\begin{align}
\label{hyp F}
F(0)=F'(0)=0, \qquad F''(x) \geqslant Cx^{m-2} \text{  and  } P(x):=xF'(x)-F(x) \leqslant C(x+x^{m}).
\end{align} 
The two first assumptions imply that if $m>1$ and $F \in \h_m$ then $F$ controls $x^m$.
\bigskip

Before giving a definition of solution of \eqref{systeme}, we recall that the nonlinear diffusion term can be rewrite as
$$ \dive(\rho \nabla F'(\rho))=\Delta P(\rho),$$
where $P(x):=xF'(x)-F(x) $ is the pressure associated to $F$.

\begin{de}
We say that $(\rho_1, \dots, \rho_l) \, : \, [0, +\infty[ \rightarrow \mathcal{P}_{2}^{ac}(\Rn)^l$ is a weak solution of \eqref{systeme} if for all $i\in [\![ 1 ,l ]\!]$, $\rho_i \in \mathcal{C}([0,T],\mathcal{P}_{2}^{ac}(\Rn))$, $P_{i}(\rho_i) \in L^1(]0,T[\times \Rn)$ for all $T<\infty$ and for all $\varphi_1, \dots, \varphi_l \in \mathcal{C}^\infty_c([0,+\infty[\times \Rn)$,
$$ \int_0^{+\infty} \int_{\Rn} \left[ \left( \partial_t \varphi_i - \nabla \varphi_i \cdot \nabla (V_i[\grho]) \right)\rho_i +\alpha_i \Delta \varphi_i P_{i}(\rho_i) \right] = -\int_{\Rn} \varphi_i(0,x)\rho_{0,i}(x).$$
\end{de}

With this definition of solution we have the following result

\begin{thm}
\label{existence Rn}
For all $i \in [\![1,l]\!]$, let  $F_i \in \h_{m_i}$, with $m_i\geqslant 1$, and $V_i$ satisfies \eqref{Hyp 1 V}, \eqref{Hyp 2 V} and \eqref{Hyp 3 V}. Let $\alpha_1,\dots,\alpha_l$ positive constants. If $\rho_{0,i} \in \mathcal{P}_{2}^{ac}(\Rn)$ satisfies\begin{align}
\label{hyp energie initiale}
\mathcal{F}_i(\rho_{0,i})+\mathcal{V}_i(\rho_{0,i}|\boldsymbol{\rho_{0}}) < +\infty,
\end{align}
with
$$ \F_i(\rho) :=\left\{ \begin{array}{ll}
\int_{\Rn} F_{i}(\rho(x))\,dx & \text{ if } \rho \ll \mathcal{L}^n,\\
+\infty & \text{ otherwise,}
\end{array}\right.   \text{  and } \V_i(\rho| \boldsymbol{\mu}):= \int_{\R} V_i[\boldsymbol{\mu}] \rho \,dx.$$
then there exist  $(\rho_1, \dots, \rho_l) \, : \, [0, +\infty[ \rightarrow \mathcal{P}_{2}^{ac}(\Rn)^l$, continuous with respect to $W_2$, weak solution of \eqref{systeme}.
\end{thm}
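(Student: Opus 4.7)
The plan is to construct a solution via a semi-implicit JKO scheme, following \cite{DFF,CL}, and then pass to the limit using the extended Aubin–Lions theorem of \cite{RS}.

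First I would set up the scheme. Fix $\tau>0$, set $\grho^0_\tau=\grho_0$, and having defined $\grho^k_\tau=(\rho^k_{1,\tau},\dots,\rho^k_{l,\tau})$, define for each $i$
\[
\rho^{k+1}_{i,\tau}\in\argmin_{\rho\in\mathcal{P}_2^{ac}(\Rn)}\Big\{\tfrac{1}{2\tau}W_2^2(\rho,\rho^k_{i,\tau})+\alpha_i\F_i(\rho)+\V_i(\rho|\grho^k_\tau)\Big\}.
\]
Existence of minimizers follows from the direct method: $V_i[\grho^k_\tau]\ge 0$ and Lipschitz, $\F_i$ is lower semicontinuous, coercive (since $F_i\in\h_{m_i}$ controls $x^{m_i}$), and $W_2^2(\cdot,\rho^k_{i,\tau})$ provides the second-moment control needed to rule out mass escaping to infinity. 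Testing the minimality against $\rho^k_{i,\tau}$ and using assumption \eqref{Hyp 2 V} together with $\int\varphi\,d(\rho^{k+1}-\rho^k)\le C\,W_2(\rho^{k+1},\rho^k)$ yields, after a discrete Grönwall/telescoping argument, the standard bounds
\[
\sum_k \frac{W_2^2(\rho^{k+1}_{i,\tau},\rho^k_{i,\tau})}{2\tau}\le C,\qquad \sup_k \F_i(\rho^k_{i,\tau})+M(\rho^k_{i,\tau})\le C,
\]
uniformly in $\tau$ on $[0,T]$. Let $\grho_\tau(t)$ be the piecewise constant interpolation; then $t\mapsto\grho_\tau(t)$ is approximately $1/2$-Hölder in $W_2$ and \eqref{hyp F} gives an $L^{m_i}$ bound on $\rho_{i,\tau}$.

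The heart of the proof is obtaining spatial compactness so as to apply Rossi–Savaré to upgrade to strong $L^{m_i}((0,T)\times\Rn)$ convergence. I would use the flow interchange argument of \cite{MMCS}: evaluate the dissipation of the JKO functional along the heat semigroup $(S_s)_{s\ge 0}$ started at $\rho^{k+1}_{i,\tau}$, which produces a Fisher-information-type contribution. Comparing the minimality of $\rho^{k+1}_{i,\tau}$ with $S_s\rho^{k+1}_{i,\tau}$, dividing by $s$ and letting $s\to 0^+$ yields, after summation in $k$, a uniform bound of the form
\[
\int_0^T\!\!\int_{\Rn}|\nabla P_i(\rho_{i,\tau})|^2/\rho_{i,\tau}\,dx\,dt+\int_0^T\|\nabla\rho_{i,\tau}^{m_i/2}\|_{L^2}^2\,dt\le C,
\]
modulo suitable absorption of the $\V_i$-contribution using \eqref{Hyp 2 V}. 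This is the step I expect to be the main obstacle on $\Rn$: unlike in the periodic case \cite{CL}, the auxiliary functional used in the flow interchange is not bounded below a priori, and there is no compact Sobolev embedding. I would handle this by exploiting the second moment bound to control the sign terms and by applying the flow interchange directly to $\F_i$, which produces a nonnegative Fisher information, the $V_i$-cross term being absorbed via \eqref{Hyp 2 V} and Cauchy–Schwarz.

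Combining the time regularity (from the $W_2$-Hölder estimate) with the space compactness (from the $\nabla\rho^{m_i/2}_{i,\tau}\in L^2$ bound and the $L^{m_i}$ bound) and the integrability tightness from the moment bound, the version of Aubin–Lions in \cite[Theorem~\ref{Rossi-Savaré}]{RS} gives $\rho_{i,\tau}\to\rho_i$ strongly in $L^{m_i}((0,T)\times\Rn)$ up to subsequence, and hence $P_i(\rho_{i,\tau})\to P_i(\rho_i)$ in $L^1$ by \eqref{hyp F}. The Hölder estimate yields $W_2$-continuity of the limit and tightness. To conclude, write the Euler–Lagrange equation of the $k$-th minimization in its discrete weak form
\[
\int_{\Rn}\!\varphi_i(\rho^{k+1}_{i,\tau}-\rho^k_{i,\tau})=\tau\!\int_{\Rn}\!\nabla\varphi_i\cdot\nabla V_i[\grho^k_\tau]\,\rho^{k+1}_{i,\tau}+\alpha_i\tau\!\int_{\Rn}\!\Delta\varphi_i\,P_i(\rho^{k+1}_{i,\tau})+O(\tau\,W_2(\rho^{k+1}_{i,\tau},\rho^k_{i,\tau})\|D^2\varphi_i\|_\infty),
\]
sum over $k\tau\in[0,T]$, and pass to the limit $\tau\to 0$: the linear terms converge by weak convergence in $L^1_{\mathrm{loc}}$, the pressure term by the strong $L^1$ convergence of $P_i(\rho_{i,\tau})$, and the nonlocal drift $\nabla V_i[\grho^k_\tau]$ by the stability \eqref{Hyp 3 V} together with $W_2$-convergence of $\grho_\tau$. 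This produces the weak formulation defining a solution of \eqref{systeme}.
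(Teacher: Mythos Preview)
Your proposal is correct and follows the paper's approach closely: semi-implicit JKO scheme, standard energy/moment/distance estimates, flow interchange with the heat semigroup to obtain the $L^2_tH^1_x$ bound on $\rho_{i,\tau}^{m_i/2}$, Rossi--Savar\'e compactness (with the second-moment bound supplying tightness at infinity) for strong $L^{m_i}$ convergence, and passage to the limit in the discrete Euler--Lagrange equation using \eqref{Hyp 3 V} for the drift. Two small points worth tightening: the flow interchange telescopes in the Boltzmann entropy $E(\rho)=\int\rho\log\rho$ via the EVI of the heat flow (it is $E(\rho^{k})-E(\rho^{k+1})$ that bounds the dissipation, and the moment estimate then controls $-E(\rho^N)$ from below on $\Rn$), which your wording leaves somewhat implicit; and the Taylor remainder in the discrete weak equation is $O\big(W_2^2(\rho^{k+1}_{i,\tau},\rho^{k}_{i,\tau})\|D^2\varphi_i\|_\infty\big)$ rather than $O(\tau W_2\|D^2\varphi_i\|_\infty)$.
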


\begin{rem}
In the following, to simplify the proof, we take $\alpha_i =1$.
\end{rem}

\bigskip

\section{Semi-implicit JKO scheme.}\label{si JKO}
In this section, we introduce the semi-implicit JKO scheme, as \cite{DFF}, and we find the first estimates as in the usual JKO scheme.

Let $h>0$ be a time step, we construct $l$ sequences with the following iterative discrete scheme: for all $i \in [\![ 1 ,l ]\!]$, $\rho_{i,h}^0=\rho_{i,0}$ and for all $k\geqslant 1$, $\rho_{i,h}^k$ minimizes
$$ \E_{i,h}(\rho | \grho_{h}^{k-1}) := W_2^2(\rho, \rho_{i,h}^{k-1}) +2h\left( \F_i(\rho) + \V_i(\rho |\grho_{h}^{k-1}) \right),$$
on $\rho \in \mathcal{P}_{2}^{ac}(\Rn)$, with $\grho_{h}^{k-1}=(\rho_{1,h}^{k-1}, \dots, \rho_{l,h}^{k-1})$.\\

In the next proposition, we show that all these sequences are well defined. We start to prove that it is well defined for one step and after in remark \ref{step k}, we extend the result for all $k$.

\begin{prop}
\label{existence et unicité minimiseur}
Let $\grho_0=(\rho_{1,0}, \dots, \rho_{l,0}) \in \mathcal{P}_{2}^{ac}(\Rn)^l$, there exists a unique solution $\grho_h^1=(\rho_{1,h}^{1}, \dots ,\rho_{l,h}^{1}) \in \mathcal{P}_{2}^{ac}(\Rn)^l$ of the minimization problem above.
\end{prop}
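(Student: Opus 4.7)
Because the vector $\grho_0$ is frozen on the right of the conditioning bar, the functional $\rho\mapsto\E_{i,h}(\rho\mid\grho_0)$ depends only on the single variable $\rho$, so the minimization decouples across $i\in[\![1,l]\!]$. I fix $i$ and treat
$$J(\rho) := W_2^2(\rho, \rho_{i,0}) + 2h\,\F_i(\rho) + 2h\int_{\Rn} V_i[\grho_0]\,\rho\,dx, \qquad \rho\in\mathcal{P}_{2}^{ac}(\Rn),$$
by the direct method of the calculus of variations.

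The first task is to bound $J$ from below. The integral against $V_i[\grho_0]$ is nonnegative by \eqref{Hyp 1 V}. For $\F_i$: if $m_i>1$, integrating $F_i''\geqslant 0$ twice from $F_i(0)=F_i'(0)=0$ gives $F_i\geqslant 0$; if $m_i=1$, I invoke the classical Gaussian comparison $\int\rho\log\rho\geqslant -C(1+M(\rho))$ together with the triangle-inequality bound $M(\rho)\leqslant 2M(\rho_{i,0})+2W_2^2(\rho,\rho_{i,0})$, so that for $h$ small the negative contribution of the entropy is absorbed into the quadratic term and $J(\rho)\geqslant \tfrac{1}{2}W_2^2(\rho,\rho_{i,0})-C_h$. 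This simultaneously yields a lower bound and coercivity in the second moment.

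Taking a minimizing sequence $(\rho^n)\subset \mathcal{P}_{2}^{ac}(\Rn)$, coercivity gives uniform control on $M(\rho^n)$, hence tightness; by Prokhorov some subsequence converges narrowly to $\bar\rho\in\mathcal{P}_2(\Rn)$, the second-moment bound passing to the limit by lower semicontinuity of $M$. Each piece of $J$ is narrowly lsc: $W_2^2(\cdot,\rho_{i,0})$ by standard Wasserstein theory (see \cite{S,V1,V2}); $\F_i$ by convexity and superlinearity of $F_i$, which also enforces $\bar\rho\ll \mathcal{L}^n$; and $\rho\mapsto\int V_i[\grho_0]\rho$ by the Portmanteau theorem, since $V_i[\grho_0]$ is continuous and nonnegative. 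Hence $J(\bar\rho)\leqslant \liminf_n J(\rho^n) = \inf J$ and $\bar\rho\in\mathcal{P}_{2}^{ac}(\Rn)$ is the sought minimizer.

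For uniqueness I use displacement convexity along generalized geodesics with base point $\rho_{i,0}$ (see \cite{AGS}). Along such a curve, $W_2^2(\cdot,\rho_{i,0})$ is $2$-uniformly convex, $\F_i$ is convex under the structural assumptions on $F_i\in\h_{m_i}$, and the linear term $\rho\mapsto\int V_i[\grho_0]\rho$ is at worst $(-C)$-convex because $\|D^2 V_i[\grho_0]\|_{L^\infty}\leqslant C$ by \eqref{Hyp 2 V}. Picking $h$ small enough that $2-2hC>0$, the functional $J$ is strictly convex along generalized geodesics and uniqueness follows. The main obstacle is the bookkeeping for $m_i=1$: the entropy is not itself bounded below, so it has to be paid for with the $W_2^2$ term via the second-moment estimate $M(\rho)\lesssim W_2^2(\rho,\rho_{i,0})+M(\rho_{i,0})$, a mechanism that forces a smallness condition on $h$ but is harmless since in the sequel $h\to 0$.
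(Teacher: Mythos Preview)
Your existence argument is correct and close in spirit to the paper's, with two cosmetic differences. You pass to the limit via Prokhorov and narrow lower semicontinuity, whereas the paper uses Dunford--Pettis and weak $L^1$ lower semicontinuity; both are fine here since the superlinearity of $F_i$ gives equi-integrability and the second-moment bound gives tightness. Also, for $m_i=1$ you use the bound $\int\rho\log\rho\geqslant -C(1+M(\rho))$ and absorb it into the Wasserstein term at the price of $h$ small; the paper instead uses the sharper Carleman-type bound $\int\rho\log\rho\geqslant -C(1+M(\rho))^\alpha$ with $\alpha<1$, which makes the functional coercive for every $h>0$. Your restriction is harmless for the global argument, but it is avoidable.

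The uniqueness argument, however, has a genuine gap. You invoke displacement convexity of $\F_i$ along generalized geodesics, but membership in $\h_{m_i}$ does \emph{not} guarantee McCann's condition \eqref{McCann condition}: the hypotheses $F_i(0)=F_i'(0)=0$, $F_i''(x)\geqslant Cx^{m_i-2}$ and $P_i(x)\leqslant C(x+x^{m_i})$ do not force $xP_i'(x)\geqslant (1-1/n)P_i(x)$. One can add a smooth bump to $F_i''$ so that $P_i$ jumps while $xP_i'=x^2F_i''$ stays moderate just past the bump, producing a ratio $xP_i'/P_i$ as small as one likes while staying in $\h_{m_i}$; in dimension $n\geqslant 2$ this violates McCann's condition. (In the paper McCann's condition is only imposed later, in Section~\ref{uniqueness}, for uniqueness of the PDE solution.) Without it, $\F_i$ need not be geodesically convex and your $(2-2hC)$-balance breaks down. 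The paper avoids this entirely by using \emph{ordinary} (linear-interpolation) convexity: $\rho\mapsto W_2^2(\rho,\rho_{i,0})$ is convex on $\mathcal{P}_2(\Rn)$ because a convex combination of transport plans is admissible, $\rho\mapsto \int V_i[\grho_0]\rho$ is linear, and $\rho\mapsto\int F_i(\rho)$ is strictly convex since $F_i$ is strictly convex. This gives uniqueness immediately, with no smallness condition on $h$ and no structural condition beyond $F_i\in\h_{m_i}$.
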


\begin{proof}
First of all, we distinguish the case $m_i >1$ from $m_i=1$.
\begin{itemize}
\item If $m_i >1$, then $\E_{i,h}(\rho | \boldsymbol{\mu}) \geqslant 0,$ for all $\rho, \mu_1, \dots, \mu_l \in \mathcal{P}_{2}^{ac}(\Rn)$. Let $\rho_\nu$ be a minimizing sequence. As $\mathcal{E}_{i,h}(\rho_{i,0}|\grho_{0}) <+\infty$ (according to \eqref{hyp energie initiale}), $(\mathcal{E}_{i,h}(\rho_\nu|\grho_{0}))_\nu$ is bounded above. So there exists  $C > 0$ such that
$$0 \leqslant \mathcal{F}_i(\rho_\nu) \leqslant C \text{  and  } W_2(\rho_\nu,\rho_{i,0}) \leqslant C.$$

From the second inequality, it follows that the second moment of $\rho_\nu$ is bounded.

\item Now if $m_i=1$, following \cite{JKO}, we obtain 
\begin{eqnarray}
\label{min E}
\E_{i,h}(\rho | \boldsymbol{\mu}) \geqslant \frac{1}{4}M(\rho) -C(1+M(\rho))^\alpha - \frac{1}{2}M(\rho_{i,h}^{0}),
\end{eqnarray}
with some $0<\alpha<1$. And since $x\mapsto \frac{1}{4}x-C(1+x)^\alpha$ is bounded below, we see that $\E_{i,h}$ is bounded below.

Let $\rho_\nu$ be a minimizing sequence. Then we have $(\F_i(\rho_\nu))_\nu$ bounded above. Indeed, as $\mathcal{E}_{i,h}(\rho_{i,0}|\grho_{0}) <+\infty$, $(\mathcal{E}_{i,h}(\rho_\nu|\grho_{0}))_\nu$ is bounded above and from \eqref{Hyp 1 V} we get,
$$\int_{\R} V_i [\grho_{0}](x) \rho_\nu (x) \,dx \geqslant 0,$$
so $(\F_i(\rho_\nu))_\nu$ is bounded above. 
According to \eqref{min E}, $(M(\rho_\nu))_\nu$ is bounded. Consequently $(\F_i(\rho_\nu))_\nu$ is bounded because $\F_i(\rho) \geqslant -C(1+M(\rho))^\alpha$.
\end{itemize}

\bigskip

In both cases, using Dunford-Pettis' theorem, we deduce that there exists $\rho_{i,h}^{1} \in \mathcal{P}_{2}^{ac}(\Rn)$ such that
$$ \rho_\nu \rightharpoonup \rho_{i,h}^{1} \text{  weakly in } L^1(\Rn).$$
It remains to prove that $\rho_{i,h}^{1} $ is a solution for the minimization problem. But since $\F_i$ and $W_2^2(\cdot,\rho_{i,0})$ are weakly lower semi-continuous in $L^1(\Rn)$, we have
$$ \E_{i,h}(\rho_{i,h}^{1}|\grho_0) \leqslant \liminf_{\nu \nearrow +\infty} \E_{i,h}(\rho_\nu|\grho_0).$$

To conclude the proof, we show that the minimizer is unique. This follows from the convexity of $\V_i(\cdot |\grho_{0})$ and $\rho \in \mathcal{P}_{2}^{ac}(\Rn) \mapsto  W_2^2(\rho, \rho_{i,h}^{0})$ and the strict convexity of $\F_i$.

\end{proof}

\begin{rem}
\label{step k}
By induction, proposition \ref{existence et unicité minimiseur} is still true for all $k\geqslant 1$:\\ 
the proof is similar when we take $k-1$ instead of $0$ and if we notice that for all $i$,
$$ \F_i(\rho_{i,h}^{1}) + \V_i(\rho_{i,h}^1|\grho_{h}^1) \leqslant \F_i(\rho_{i,0}) + \V_i(\rho_{i,0}|\grho_0)+ C W_2( \grho_{0},\grho_{h}^{1}) \leqslant C.$$
The last inequality is obtained from the minimization scheme and from the assumptions \eqref{Hyp 1 V}, \eqref{Hyp 3 V} and \eqref{hyp energie initiale}. By induction it becomes, for all $k\geqslant 2$,
$$ \F_i(\rho_{i,h}^{k-1}) + \V_i(\rho_{i,h}^{k-1}|\grho_{h}^{k-1}) \leqslant \F_i(\rho_{i,0}) + \V_i(\rho_{i,0}|\grho_0)+ C\sum_{j=1}^{k-1}W_2( \grho_{h}^{j-1},\grho_{h}^{j}) \leqslant C.$$
This inequality shows $\E_{i,h}(\rho_{i,h}^{k-1}|\grho_{h}^{k-1}) <+\infty$ and so we can bound $(\F_i(\rho_\nu))_\nu$ in the previous proof.
\end{rem}

\bigskip
Thus we proved that sequences $(\rho_{i,h}^k)_{k\geqslant 0}$ are well defined for all $i \in [\![ 1 ,l ]\!]$. Then we define the interpolation $\rho_{i,h} : \mathbb{R}^+ \rightarrow \mathcal{P}_{2}^{ac}(\Rn)$ by, for all $k \in \mathbb{N}$,
\begin{equation}
\label{interpolation}
\rho_{i,h}(t)= \rho_{i,h}^k \text{ if } t\in ((k-1)h,kh].
\end{equation}

The following proposition shows that $\rho_{i,h}$ are solutions of a discrete approximation of the system \eqref{systeme}.

\begin{prop}
\label{equation discrete}
Let $h>0$, for all $T>0$, let $N$ such that $Nh=T$ and for all $(\phi_1, \dots, \phi_l) \in \mathcal{C}^\infty_c ([0,T)\times \Rn)^l$, then
\begin{eqnarray*}
\int_0^T \int_{\Rn} \rho_{i,h}(t,x)  \partial_t \phi_i(t,x) \,dxdt&=&-h\sum_{k=0}^{N-1}\int_{\Rn} P_i(\rho_{i,h}^{k+1}(x))  \Delta \phi_i (t_{k},x)\,dx\\
&+&h\sum_{k=0}^{N-1}\int_{\Rn} \nabla (V_i[\grho_h^{k}]) \cdot  \nabla \phi_i(t_{k},x) \rho_{i,h}^{k+1}(x)\,dx\\
&+&\sum_{k=0}^{N-1}\int_{\Rn \times \Rn} \mathcal{R}[\phi_i(t_{k},\cdot)](x,y) d\gamma_{i,h}^k (x,y)\\
&-& \int_{\Rn} \rho_{i,0}(x) \phi_i(0,x) \, dx,
\end{eqnarray*}
with, for all $\phi \in \mathcal{C}^\infty_c([0,T) \times \Rn)$,
$$ |\mathcal{R}[\phi](x,y)| \leqslant \frac{1}{2} \|D^2 \phi \|_{L^\infty ([0,T) \times \Rn)} |x- y|^2,$$
and $\gamma_{i,h}^k$ is the optimal transport plan in $\Gamma (\rho_{i,h}^{k},\rho_{i,h}^{k+1})$.

\end{prop}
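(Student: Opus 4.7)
The strategy is the standard one for JKO-type schemes: derive a discrete Euler--Lagrange identity at each step $k$ by internal variation, then sum over $k$ and apply Abel summation to recognize the discrete time derivative.

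First I fix $k$ and perturb the minimizer $\rho_{i,h}^{k+1}$ by a smooth compactly supported vector field $\xi$, setting $\rho_\eps := (\mathrm{Id}+\eps\xi)_{\#}\rho_{i,h}^{k+1}$. Using $\gamma_{i,h}^k$ as a (non-optimal) transport plan between $\rho_{i,h}^k$ and $\rho_\eps$ yields
\[
W_2^2(\rho_\eps,\rho_{i,h}^k)\leq W_2^2(\rho_{i,h}^{k+1},\rho_{i,h}^k)-2\eps\int(x-y)\cdot\xi(y)\,d\gamma_{i,h}^k(x,y)+O(\eps^2),
\]
while a change of variables gives the classical first variations
\[
\frac{d}{d\eps}\bigg|_{\eps=0}\F_i(\rho_\eps) = -\int P_i(\rho_{i,h}^{k+1})\dive\xi\,dy, \qquad \frac{d}{d\eps}\bigg|_{\eps=0}\V_i(\rho_\eps|\grho_{h}^{k}) = \int\nabla(V_i[\grho_{h}^{k}])\cdot\xi\,\rho_{i,h}^{k+1}\,dy.
\]
Combining with the minimality of $\rho_{i,h}^{k+1}$ and using that $-\xi$ is equally admissible yields the discrete Euler--Lagrange identity
\[
\int(x-y)\cdot\xi(y)\,d\gamma_{i,h}^k = -h\int P_i(\rho_{i,h}^{k+1})\dive\xi\,dy + h\int\nabla(V_i[\grho_{h}^{k}])\cdot\xi\,\rho_{i,h}^{k+1}\,dy.
\]

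Next, I specialize to $\xi=\nabla\phi_i(t_k,\cdot)$ (so $\dive\xi=\Delta\phi_i(t_k,\cdot)$) and use the second-order Taylor expansion $\phi_i(t_k,x)-\phi_i(t_k,y) = \nabla\phi_i(t_k,y)\cdot(x-y) + \mathcal{R}[\phi_i(t_k,\cdot)](x,y)$ with $|\mathcal{R}|\leq\tfrac{1}{2}\|D^2\phi_i\|_\infty|x-y|^2$. Using the marginals of $\gamma_{i,h}^k$, the transport term rewrites as $\int\phi_i(t_k,\cdot)\,d\rho_{i,h}^k - \int\phi_i(t_k,\cdot)\,d\rho_{i,h}^{k+1} - \int\mathcal{R}\,d\gamma_{i,h}^k$, providing a local identity at level $k$.

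Finally I sum the local identities for $k=0,\ldots,N-1$ and perform Abel summation. The telescoping part expands as
\[
\sum_{k=0}^{N-1}\int\phi_i(t_k,\cdot)\,d(\rho_{i,h}^k-\rho_{i,h}^{k+1}) = \int\phi_i(0,\cdot)\rho_{i,0}\,dx + \sum_{k=1}^{N-1}\int[\phi_i(t_k,\cdot)-\phi_i(t_{k-1},\cdot)]\,d\rho_{i,h}^k - \int\phi_i(t_{N-1},\cdot)\,d\rho_{i,h}^N,
\]
and using the piecewise-constant interpolation \eqref{interpolation} together with the compact support of $\phi_i$ in $[0,T)\times\Rn$ (so $\phi_i(T,\cdot)\equiv 0$), the right-hand side equals $\int_0^T\int_{\Rn}\rho_{i,h}\partial_t\phi_i\,dxdt + \int\phi_i(0)\rho_{i,0}\,dx$. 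Rearranging produces the stated formula. The main technical point to watch is the first variation of $\F_i$: the growth assumption $P_i(x)\leq C(x+x^{m_i})$ combined with the $L^{m_i}$ control on $\rho_{i,h}^{k+1}$ coming from Remark \ref{step k} guarantees $P_i(\rho_{i,h}^{k+1})\in L^1(\Rn)$, which is what makes the differentiation under the integral sign legitimate and the computation above rigorous.
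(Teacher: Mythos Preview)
Your proposal is correct and follows essentially the same two-step approach as the paper: derive the discrete Euler--Lagrange identity by internal variation of the minimizer, specialize $\xi=\nabla\phi_i(t_k,\cdot)$ and invoke the second-order Taylor expansion, then sum over $k$ and use Abel summation together with the piecewise-constant interpolation to recover the time-derivative term. The only cosmetic differences are that the paper perturbs via the flow $\Psi_\tau$ of $\xi$ rather than the push-forward by $\mathrm{Id}+\eps\xi$, and organizes the summation by extending $\phi_i$ to $[-h,0)$ instead of writing out the Abel rearrangement explicitly; neither affects the argument.
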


\begin{proof}
We split the proof in two steps. We first compute the first variation of $\E_{i,h}(\cdot |\grho_{h}^{k})$ and then we integrate in time. In the following, $i$ is fixed in $[\![1,l]\!]$.\\

\begin{itemize}
\item {\it First step:} For all $k\geqslant 0$, if $\gamma_{i,h}^k$ is the optimal transport plan in $\Gamma (\rho_{i,h}^{k},\rho_{i,h}^{k+1})$ then
\begin{eqnarray*}
\int_{\Rn} \varphi_i(x) (\rho_{i,h}^{k+1}(x)-\rho_{i,h}^{k}(x))&=&h\int_{\Rn} P_i(\rho_{i,h}^{k+1}(x))  \Delta \varphi_i(x) \, dx\\
&-&h\int_{\Rn} \nabla(V_i [\grho_{h}^{k}])(x) \cdot \nabla \varphi_i (x) \rho_{i,h}^{k+1}(x) \,dx\\
&-& \int_{\Rn\times \Rn} \mathcal{R}[\varphi_i](x,y) d\gamma_{i,h}^k (x,y),
\end{eqnarray*}
for all $\varphi_i \in \mathcal{C}^\infty _c (\Rn)$ and for all $i  \in [\![ 1 ,l ]\!]$.

\bigskip

To obtain this equality, we compute the first variation of $\E_{i,h}(\cdot |\grho_{h}^{k} )$. Let $\xi_i \in  \mathcal{C}^\infty _c (\Rn,\Rn)$ and $\tau >0$ and let $\Psi_\tau$ defined by
$$ \partial_\tau \Psi_\tau = \xi_i \circ \Psi_\tau, \qquad \Psi_0=Id.$$
After we perturb $\rho_{i,h}^{k+1}$ by $\rho_\tau=(\Psi_\tau)_\sharp \rho_{i,h}^{k+1}$. According to the definition of $\rho_{i,h}^{k+1}$, we get
\begin{eqnarray}
\label{premiere varaiation}
 \frac{1}{\tau}\left(\E_{i,h}(\rho_\tau |\grho_{h}^{k} ) - \E_{i,h}(\rho_{i,h}^{k+1} |\grho_{h}^{k}  ) \right) \geqslant 0.
\end{eqnarray}
By standard computations (see for instance \cite{JKO}, \cite{A}) we have 
\begin{eqnarray}
\label{variation distance}
\limsup_{\tau \searrow 0} \frac{1}{\tau}(W_2^2(\rho_\tau,\rho_{i,h}^{k}) -W_2^2(\rho_{i,h}^{k+1},\rho_{i,h}^{k})) \leqslant \int_{\Rn \times \Rn} (y-x) \cdot \xi_i(y) \, d\gamma_{i,h}^k(x,y),
\end{eqnarray}
with $\gamma_{i,h}^k$ is the optimal transport plan in $W_2(\rho_{i,h}^{k},\rho_{i,h}^{k+1})$,
\begin{eqnarray}
\label{variation entropie}
\limsup_{\tau \searrow 0} \frac{1}{\tau}(\F_i(\rho_\tau) -\F_i(\rho_{i,h}^{k+1}) )\leqslant - \int_{\Rn } P_i(\rho_{i,h}^{k+1}(x)) \dive(\xi_i(x))\, dx,
\end{eqnarray}
and
\begin{eqnarray}
\label{variation terme linéaire}
\limsup_{\tau \searrow 0} \frac{1}{\tau}\left(\V_i ( \rho_\tau |\grho_{h}^{k} )   -\V_i ( \rho_{i,h}^{k+1}|\grho_{h}^{k} )   \right) \leqslant  \int_{\Rn }\nabla(V_i [\grho_{h}^{k} ])(x) \cdot \xi_i(x) \rho_{i,h}^{k+1}(x) \,dx.
\end{eqnarray}

If we combine \eqref{premiere varaiation}, \eqref{variation distance}, \eqref{variation entropie} and \eqref{variation terme linéaire}, we get

$$  \int_{\Rn \times \Rn} (y-x) \cdot \xi_i(y) \, d\gamma_{i,h}^k(x,y)+h\int_{\Rn }\nabla(V_i [\grho_{h}^{k} ]) (x)\cdot \xi_i(x)\rho_{i,h}^{k+1}(x)\,dx- h\int_{\Rn } P_i(\rho_{i,h}^{k+1}(x)) \dive(\xi_i(x))\,dx\geqslant 0.$$

And if we replace $\xi_i$ by $-\xi_i$, this inequality becomes an equality. 

To conclude this first part, we choose $\xi_i= \nabla \varphi_i $ and we notice, using Taylor's expansion, that
$$ \varphi_i(x)-\varphi_i(y) = \nabla  \varphi_i (y) \cdot (x-y) +\mathcal{R}[\varphi_i](x,y),$$
with $\mathcal{R}[\varphi_i]$ satisfies
$$ |\mathcal{R}[\varphi_i](x,y)| \leqslant \frac{1}{2} \|D^2 \varphi_i \|_{L^\infty ([0,T) \times \Rn)} |x- y|^2.$$

\item {\it Second step:} let $h>0$, for all $T>0$, let $N$ such that $Nh=T$ ($t_k:=kh$) and for all $(\phi_1, \dots, \phi_l) \in \mathcal{C}^\infty_c ([0,T)\times \Rn)^l$, extend, for all $i \in [\![ 1 ,l ]\!]$, by $\phi_i(0,\cdot)$ on $[-h,0)$, then

\begin{eqnarray*}
\int_0^T \int_{\Rn} \rho_{i,h}(t,x)  \partial_t \phi_i(t,x)\,dxdt &=& \sum_{k=0}^{N} \int_{t_{k-1}}^{t_{k}} \int_{\Rn} \rho_{i,h}^k(x)  \partial_t \phi_i(t,x)\,dxdt\\
&=& \sum_{k=0}^{N} \int_{\Rn} \rho_{i,h}^k(x) (\phi_i(t_{k},x)-\phi_i(t_{k-1},x))\,dx\\
&=& \sum_{k=0}^{N-1} \int_{\Rn} \phi_i(t_{k},x) (\rho_{i,h}^{k}(x) -\rho_{i,h}^{k+1}(x))\,dx - \int_{\Rn} \rho_{i,0}(x) \phi_i(0,x) \, dx.
\end{eqnarray*}
Using the first part with $\varphi_i=\phi_i(t_{k},\cdot)$, we get
 \begin{eqnarray*}
\int_0^T \int_{\Rn} \rho_{i,h}(t,x)  \partial_t \phi_i(t,x) \,dxdt&=&-h\sum_{k=0}^{N-1}\int_{\Rn} P_i(\rho_{i,h}^{k+1}(x))  \Delta \phi_i (t_{k},x)\,dx\\
&+&h\sum_{k=0}^{N-1}\int_{\Rn} \nabla (V_i[\grho_h^{k}] \cdot  \nabla \phi_i(t_{k},x) \rho_{i,h}^{k+1}(x)\,dx\\
&+&\sum_{k=0}^{N-1}\int_{\Rn \times \Rn} \mathcal{R}[\phi_i(t_{k},\cdot)](x,y) d\gamma_{i,h}^k (x,y)\\
&-& \int_{\Rn} \rho_{i,0}(x) \phi_i(0,x) \, dx,
\end{eqnarray*}
for all $i \in [\![ 1 ,l ]\!]$.
\end{itemize}
\end{proof}

The last proposition of this section gives usual estimates in gradient flow theory.

\begin{prop}
\label{estimates}
For all $T <+\infty$ and for all $i \in [\![ 1 ,l ]\!]$, there exists a constant $C <+\infty$ such that for all $k \in \mathbb{N}$ and for all $h$ with $kh\leqslant T$ and let $N=\lfloor \frac{T}{h} \rfloor$, we have
\begin{eqnarray}
& M(\rho_{i,h}^k) \leqslant C, \label{estimation moment}\\
& \F_i(\rho_{i,h}^k) \leqslant C,\label{estimation entropie}\\
& \sum_{k=0}^{N-1} W_2^2(\rho_{i,h}^{k},\rho_{i,h}^{k+1}) \leqslant Ch.\label{estimation distance}
\end{eqnarray}

\end{prop}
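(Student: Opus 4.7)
The plan is to exploit minimality of $\rho_{i,h}^{k+1}$ against the competitor $\rho_{i,h}^k$ itself, which in view of the scheme yields the one-step inequality
\begin{align*}
W_2^2(\rho_{i,h}^{k+1},\rho_{i,h}^k) + 2h\F_i(\rho_{i,h}^{k+1}) + 2h\V_i(\rho_{i,h}^{k+1}|\grho_h^k) \leq 2h\F_i(\rho_{i,h}^k) + 2h\V_i(\rho_{i,h}^k|\grho_h^k).
\end{align*}
The potential difference equals $\int_{\Rn} V_i[\grho_h^k](\rho_{i,h}^k-\rho_{i,h}^{k+1})\,dx$, and since by \eqref{Hyp 2 V} the function $V_i[\grho_h^k]$ is Lipschitz with a constant independent of $k$ and $h$, Kantorovich's inequality \eqref{Lipschitz W2} controls this by $CW_2(\rho_{i,h}^{k+1},\rho_{i,h}^k)$. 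Young's inequality then absorbs the linear term into the $W_2^2$ at the cost of an additive $Ch^2$:
\begin{align*}
\tfrac{1}{2}W_2^2(\rho_{i,h}^{k+1},\rho_{i,h}^k) + 2h\F_i(\rho_{i,h}^{k+1}) \leq 2h\F_i(\rho_{i,h}^k) + Ch^2.
\end{align*}

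Telescoping this inequality from $k=0$ to $j-1$ for any $j\leq N$ produces
\begin{align*}
\tfrac{1}{2}\sum_{k=0}^{j-1} W_2^2(\rho_{i,h}^{k+1},\rho_{i,h}^k) + 2h\F_i(\rho_{i,h}^j) \leq 2h\F_i(\rho_{i,0}) + CTh.
\end{align*}
When $m_i>1$, the conditions $F_i(0)=F_i'(0)=0$ and convexity force $\F_i \geq 0$, so \eqref{estimation entropie} and \eqref{estimation distance} follow at once. The moment bound \eqref{estimation moment} then comes from the triangle inequality together with Cauchy--Schwarz:
\begin{align*}
\sqrt{M(\rho_{i,h}^k)} \leq \sqrt{M(\rho_{i,0})} + \sum_{j=0}^{k-1}W_2(\rho_{i,h}^j,\rho_{i,h}^{j+1}) \leq \sqrt{M(\rho_{i,0})} + \sqrt{k}\,\Bigl(\sum_{j=0}^{k-1}W_2^2\Bigr)^{1/2} \leq \sqrt{M(\rho_{i,0})} + \sqrt{CT}.
\end{align*}

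The delicate case is $m_i=1$, where $F_i(x)=x\log x$ is not bounded below and there is a circular dependence between entropy, moment, and distance sum to disentangle. I would break the loop with the Gaussian lower bound
\begin{align*}
\F_i(\rho) = \int_{\Rn} \rho\log\rho\,dx \geq -\lambda\, M(\rho) - C(\lambda), \qquad \lambda>0,
\end{align*}
obtained by comparing $\rho$ with the reference density $Z_\lambda^{-1}e^{-\lambda|x|^2}$ via Jensen. Combining this with the crude estimate $M(\rho_{i,h}^j) \leq 2M(\rho_{i,0}) + 2(T/h)\sum_{k=0}^{j-1}W_2^2(\rho_{i,h}^{k+1},\rho_{i,h}^k)$ derived from the Cauchy--Schwarz computation above and plugging into the telescoped inequality gives
\begin{align*}
\bigl(\tfrac{1}{2} - 4\lambda T\bigr)\sum_{k=0}^{j-1} W_2^2(\rho_{i,h}^{k+1},\rho_{i,h}^k) \leq 4\lambda h\, M(\rho_{i,0}) + 2h\F_i(\rho_{i,0}) + ChT\bigl(1+C(\lambda)\bigr).
\end{align*}
Choosing $\lambda<1/(8T)$ makes the coefficient on the left strictly positive and delivers \eqref{estimation distance}; the moment and entropy estimates then follow exactly as in the $m_i>1$ case. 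The only real obstacle is this circular $m_i=1$ case; the supercritical case and the telescoping scheme are routine gradient-flow bookkeeping.
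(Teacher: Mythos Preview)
Your argument is correct and follows the same skeleton as the paper: test optimality against $\rho_{i,h}^k$, control the potential difference via the uniform Lipschitz bound \eqref{Hyp 2 V} and \eqref{Lipschitz W2}, absorb with Young, and telescope. The $m_i>1$ case is identical in substance.

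The only genuine difference is how you break the entropy/moment circularity when $m_i=1$. The paper uses the Carleman-type sublinear bound $-\F_i(\rho)\leq C(1+M(\rho))^\alpha$ with $\alpha<1$: feeding this into the telescoped estimate and the crude moment bound yields a self-referential inequality of the form $M(\rho_{i,h}^k)\leq C+C(1+M(\rho_{i,h}^k))^\alpha$, which closes because $\alpha<1$. You instead use the Gaussian comparison $\F_i(\rho)\geq -\lambda M(\rho)-C(\lambda)$ and choose $\lambda$ small depending on $T$ so that the $4\lambda T$ coefficient can be absorbed into the $\tfrac12$. Both are standard and both give $T$-dependent constants; your route is arguably more elementary (no interpolation needed), while the paper's sublinear bound makes the closing step a one-liner and treats the two cases $m_i>1$ and $m_i=1$ uniformly through the single inequality $-\F_i(\rho)\leq C(1+M(\rho))^\alpha$. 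A cosmetic point: in your final displayed inequality the right-hand side should read $4\lambda h\,M(\rho_{i,0})+2h\F_i(\rho_{i,0})+CTh+2hC(\lambda)$ rather than $ChT(1+C(\lambda))$, since the $C(\lambda)$ term carries no factor of $T$; this does not affect the conclusion.
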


\begin{proof}
The proof combines some techniques used in \cite{JKO} et \cite{DFF}. In the following, $i$ is fixed in $[\![ 1 ,l ]\!]$. As $\rho_{i,h}^{k+1}$ is optimal and $\rho_{i,h}^{k}$ is admissible, we have
$$ \E_{i,h}(\rho_{i,h}^{k+1}|\grho_{h}^{k}) \leqslant  \E_{i,h}(\rho_{i,h}^{k}|\grho_{h}^{k}).$$
In other words,
$$\frac{1}{2} W_2^2(\rho_{i,h}^{k},\rho_{i,h}^{k+1}) +h\left(\F_i(\rho_{i,h}^{k+1}) + \V_i ( \rho_{i,h}^{k+1}|\grho_{h}^{k})  \right) \leqslant h\left(\F_i(\rho_{i,h}^{k}) + \V_i ( \rho_{i,h}^{k}|\grho_{h}^{k}) \right).$$

From \eqref{Hyp 2 V}, we know that $V_i[\grho]$ is a $C$-Lipschitz function where $C$ does not depend on the measure. Hence, because of \eqref{Lipschitz W2}, we have

$$ \V_i ( \rho_{i,h}^{k}|\grho_{h}^{k})- \V_i ( \rho_{i,h}^{k+1}|\grho_{h}^{k})\leqslant CW_2(\rho_{i,h}^{k+1},\rho_{i,h}^{k}).$$

Using Young's inequality, we obtain
$$\V_i ( \rho_{i,h}^{k}|\grho_{h}^{k})- \V_i ( \rho_{i,h}^{k+1}|\grho_{h}^{k}) \leqslant C^2h  +\frac{1}{4h}W_2^2(\rho_{i,h}^{k+1},\rho_{i,h}^{k}).$$

It yields
\begin{eqnarray}
\label{maj entropie}
 \frac{1}{4} W_2^2(\rho_{i,h}^{k},\rho_{i,h}^{k+1}) \leqslant h(\F_i(\rho_{i,h}^{k})-\F_i(\rho_{i,h}^{k+1}))+C^2h^2.
\end{eqnarray}
Summing over $k$, we can assert that
\begin{eqnarray*}
\sum_{k=0}^{N-1} \frac{1}{4} W_2^2(\rho_{i,h}^{k},\rho_{i,h}^{k+1}) &\leqslant & h\left(\sum_{k=0}^{N-1} \left(\F_i(\rho_{i,h}^{k})-\F_i(\rho_{i,h}^{k+1}) \right) + C^2T \right)\\
& \leqslant & h \left(\F_i(\rho_{i,0})-\F_i(\rho_{i,h}^{N}) +C^2T \right).
\end{eqnarray*}
But by assumption, $\F_i(\rho_{i,0})<+\infty$ and $-\F_i(\rho)\leqslant C(1+M(\rho))^\alpha$, with $0<\alpha<1$, then 
\begin{eqnarray}
\label{premiere maj somme}
\sum_{k=1}^N \frac{1}{4} W_2^2(\rho_{i,h}^{k},\rho_{i,h}^{k+1}) \leqslant h\left(\F_i(\rho_{i,0})+ C(1+M(\rho_{i,h}^{N}))^\alpha+ C^2T \right).
\end{eqnarray}
Thus we are reduced to prove \eqref{estimation moment}. But
\begin{eqnarray*}
M(\rho_{i,h}^k) &\leqslant & 2 W_2^2( \rho_{i,h}^k,\rho_{i,0}) +2M(\rho_{i,0})\\
&\leqslant & 2 k\sum_{m=0}^{k-1}  W_2^2(\rho_{i,h}^{m},\rho_{i,h}^{m+1})+2M(\rho_{i,0})\\
&\leqslant & 8 kh\left(\F_i(\rho_{i,0})+ C(1+M(\rho_{i,h}^{k}))^\alpha+ C^2T \right) +2M(\rho_{i,0})\\
&\leqslant & 8T\left(\F_i(\rho_{i,0})+ C(1+M(\rho_{i,h}^{k}))^\alpha+ C^2T \right) +2M(\rho_{i,0}).
\end{eqnarray*}
As $\alpha <1$, we get \eqref{estimation moment}. The second line is obtained with the triangle inequality and Cauchy-Schwarz inequality while the third line is obtained because of \eqref{premiere maj somme}. So we have poved \eqref{estimation moment} and \eqref{estimation distance}.

\smallskip
To have \eqref{estimation entropie}, we just have to use \eqref{maj entropie} and to sum. This implies 
$$ \F_i(\rho_{i,h}^k) \leqslant \F_i(\rho_{i,0})+ C^2T,$$
which proves the proposition.

\end{proof}


\section{$\kappa$-flows and gradient estimate.}\label{flow interchange}

Estimates of proposition \ref{estimates} permit to obtain weak convergence in $L^1$ (see proposition \ref{cv faible}). Unfortunately, it is not enough to pass to the limit in the nonlinear diffusion term $P_i(\rho_{i,h})$. In this section, we follow the general strategy developed in \cite{MMCS} and used in \cite{DFM} and \cite{CL} to get an estimate on the gradient of $\rho_{i,h}^{m_i/2}$. This estimate will be used in proposition \ref{cv forte} to have a strong convergence of $\rho_{i,h}$ in $L^{m_i}(]0,T[\times \Rn)$. In the following, we are only interested by the case where $m_i >1$ because if $m_i=1$, $P_i(\rho_{i,h})=\rho_{i,h}$ and the weak convergence is enough to pass to the limit in proposition \ref{equation discrete}. In the first part of this section, we recall the definition of $\kappa$-flows (or contractive gradient flow) and some results on the dissipation of $\F_i +\V_i$ then, in the second part, we use these results with the heat flow to find an estimate on the gradient.


\subsection{$\kappa$-flows.}

\begin{de}
A semigroup $\mathfrak{S}_\Psi \, : \, \R^+ \times \mathcal{P}_{2}^{ac}(\Rn) \rightarrow \mathcal{P}_{2}^{ac}(\Rn)$ is a $\kappa$-flow for the functional $\Psi \, :\,\mathcal{P}_{2}^{ac}(\Rn)\rightarrow \R \cup \{ +\infty\}$ with respect to  $W_2$ if, for all $\rho \in \mathcal{P}_{2}^{ac}(\Rn)$, the curve $s \mapsto \mathfrak{S} _\Psi^s[\rho]$ is absolutely continuous on $\R^+$ and satisfies the evolution variational inequality (EVI) 
\begin{eqnarray}
\label{EVI}
\frac{1}{2}\frac{d^+}{d\sigma}\mid_{\sigma=s} W_2^2(\mathfrak{S} _\Psi^s[\rho], \tilde{\rho}) +\frac{\kappa}{2}W_2^2(\mathfrak{S} _\Psi^s[\rho], \tilde{\rho}) \leqslant \Psi(\tilde{\rho})-\Psi(\mathfrak{S} _\Psi^s[\rho]),
\end{eqnarray}
for all $s>0$ and for all $\tilde{\rho} \in \mathcal{P}_{2}^{ac}(\Rn)$ such that $\Psi(\tilde{\rho}) < +\infty$, where 
$$ \frac{d^+}{dt}f(t) := \limsup_{s\rightarrow 0^+} \frac{f(t+s) -f(t)}{s}.$$
\end{de}

In \cite{AGS}, the authors showed that the fact a functional admits a $\kappa$-flow is equivalent to $\lambda$-displacement convexity (see section \ref{uniqueness} for definition).

The next two lemmas give results on the variations of $\rho_{i,h}^k$ along specific $\kappa$-flows and are extracted from \cite{DFM}. The goal is to use them with the heat flow.\\

\begin{lem}
\label{lem dissipation}
Let $\Psi \, :\,\mathcal{P}_{2}^{ac}(\Rn)\rightarrow \R \cup \{ +\infty\}$ l.s.c on $\mathcal{P}_{2}^{ac}(\Rn)$ which possesses a $\kappa$-flot $\mathfrak{S}_\Psi$. Define the dissipation $\D_{i,\Psi}$ along $\mathfrak{S}_\Psi$ by
$$ \D_{i,\Psi}(\rho | \boldsymbol{\mu}) := \limsup_{s \searrow 0} \frac{1}{s} \left(\F_i(\rho)-\F_i(\mathfrak{S}_\Psi^s[\rho]) + \V_i(\rho | \boldsymbol{\mu})-\V_i(\mathfrak{S} _\Psi^s[\rho]|\boldsymbol{\mu})\right)$$
for all $\rho \in \mathcal{P}_{2}^{ac}(\Rn)$ and $\boldsymbol{\mu} \in \mathcal{P}_{2}^{ac}(\Rn)^l$.\\
If $\rho_{i,h}^{k-1}$ et $\rho_{i,h}^{k}$ are two consecutive steps of the semi-implicit JKO scheme, then
\begin{eqnarray}
\label{dissipation}
\Psi(\rho_{i,h}^{k-1})-\Psi(\rho_{i,h}^{k}) \geqslant h\D_{i,\Psi}(\rho_{i,h}^{k} | \grho_{h}^{k-1}) + \frac{\kappa}{2}W_2^2(\rho_{i,h}^{k},\rho_{i,h}^{k-1}).
\end{eqnarray}

\end{lem}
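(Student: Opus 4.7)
The natural approach is the flow interchange method of Matthes--McCann--Savar\'e: since $\rho_{i,h}^k$ minimizes $\E_{i,h}(\cdot \mid \grho_{h}^{k-1})$, I would perturb it along the $\kappa$-flow $\s_\Psi$ of the auxiliary functional $\Psi$ itself. The key point is that $\Psi$ plays a double role here: it generates the perturbation direction (through the semigroup $\s_\Psi$) and, simultaneously, provides a bound on the resulting derivative of the Wasserstein distance (through its own EVI).

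Concretely, I would first plug the admissible competitor $\s_\Psi^s[\rho_{i,h}^k]$ (for $s>0$) into the minimality inequality $\E_{i,h}(\rho_{i,h}^k \mid \grho_h^{k-1}) \leq \E_{i,h}(\s_\Psi^s[\rho_{i,h}^k] \mid \grho_h^{k-1})$. Expanding the definition of $\E_{i,h}$, rearranging, and dividing by $s>0$ yields
$$\frac{2h}{s}\Big[\big(\F_i(\rho_{i,h}^k) - \F_i(\s_\Psi^s[\rho_{i,h}^k])\big) + \big(\V_i(\rho_{i,h}^k \mid \grho_h^{k-1}) - \V_i(\s_\Psi^s[\rho_{i,h}^k] \mid \grho_h^{k-1})\big)\Big] \leq \frac{W_2^2(\s_\Psi^s[\rho_{i,h}^k], \rho_{i,h}^{k-1}) - W_2^2(\rho_{i,h}^k, \rho_{i,h}^{k-1})}{s}.$$

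Next, taking $\limsup_{s \searrow 0}$ on both sides, the left-hand side converges, by the very definition of $\D_{i,\Psi}$, to $2h\, \D_{i,\Psi}(\rho_{i,h}^k \mid \grho_h^{k-1})$, while the right-hand side is exactly the upper right derivative $\frac{d^+}{d\sigma}\big|_{\sigma=0} W_2^2(\s_\Psi^\sigma[\rho_{i,h}^k], \rho_{i,h}^{k-1})$. I would then apply \eqref{EVI} at $\sigma=0$ with $\rho=\rho_{i,h}^k$ and $\tilde{\rho}=\rho_{i,h}^{k-1}$ to bound this derivative by $2\big(\Psi(\rho_{i,h}^{k-1}) - \Psi(\rho_{i,h}^k)\big) - \kappa\, W_2^2(\rho_{i,h}^k, \rho_{i,h}^{k-1})$. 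Combining and dividing by $2$ gives precisely \eqref{dissipation}.

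The argument is essentially routine once the double use of $\Psi$ is recognized. The only technical points to verify are the admissibility of $\s_\Psi^s[\rho_{i,h}^k]$ as a competitor (which follows from $\s_\Psi$ being a semigroup on $\mathcal{P}_2^{ac}(\Rn)$) and the monotonicity of $\limsup$ under a pointwise inequality. I do not expect any real obstacle in this proof itself; the delicate work of the section will come later, when $\Psi$ is chosen to be the heat semigroup and one has to compute $\D_{i,\Psi}$ explicitly to extract a genuine $H^1$--type bound on $\rho_{i,h}^{m_i/2}$.
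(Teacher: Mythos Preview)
Your overall strategy is exactly the flow interchange argument the paper uses: perturb the minimizer $\rho_{i,h}^k$ along $\s_\Psi^s$, compare with the minimality inequality, and invoke the EVI of $\Psi$ to bound the resulting derivative of the squared Wasserstein distance. In this sense you have identified the correct mechanism.

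There is, however, a technical gap at the point where you ``apply \eqref{EVI} at $\sigma=0$''. The EVI in the paper is stated only for $s>0$, and it is not immediate that it extends to $s=0$: one would need $\Psi(\rho_{i,h}^k)<+\infty$ on the right-hand side, which is not assumed, and even then the upper right Dini derivative at $0$ is not a priori controlled by the pointwise EVI at positive times. The paper handles precisely this issue: it applies \eqref{EVI} at each $s>0$ with $\tilde\rho=\rho_{i,h}^{k-1}$, then uses the lower semicontinuity of $\Psi$ (together with $W_2$-continuity of $s\mapsto\s_\Psi^s[\rho_{i,h}^k]$ at $s=0$) to replace $\Psi(\s_\Psi^s[\rho_{i,h}^k])$ by $\Psi(\rho_{i,h}^k)$ in the limit, and uses the absolute continuity of $s\mapsto W_2^2(\s_\Psi^s[\rho_{i,h}^k],\rho_{i,h}^{k-1})$ to pass from $\limsup_{s\searrow 0}\frac{d^+}{d\sigma}\big|_{\sigma=s}W_2^2$ to the difference quotient $\limsup_{s\searrow 0}\frac{1}{s}\big(W_2^2(\s_\Psi^s[\rho_{i,h}^k],\rho_{i,h}^{k-1})-W_2^2(\rho_{i,h}^k,\rho_{i,h}^{k-1})\big)$. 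Only then does the minimality of $\rho_{i,h}^k$ enter to bound this last quantity below by $2h\,\D_{i,\Psi}(\rho_{i,h}^k\mid\grho_h^{k-1})$. You should also note at the outset, as the paper does, that one may assume $\Psi(\rho_{i,h}^{k-1})<+\infty$ (otherwise \eqref{dissipation} is trivial), since this is required to invoke \eqref{EVI} with $\tilde\rho=\rho_{i,h}^{k-1}$.

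In short: same method, but your passage to $\sigma=0$ in the EVI needs to be replaced by the l.s.c.\ and absolute-continuity argument the paper carries out; that argument is the actual analytic content of the lemma.
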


\begin{proof}
Since the result is trivial if $\Psi(\rho_{i,h}^{k-1}) =+\infty$, we assume $\Psi(\rho_{i,h}^{k-1}) <+\infty$.\\
Thus we can use the EVI inequality \eqref{EVI} with $\rho:=\rho_{i,h}^k$ and $\tilde{\rho}:=\rho_{i,h}^{k-1}$. We obtain  
$$ \Psi(\rho_{i,h}^{k-1})-\Psi(\s_\Psi^s[\rho_{i,h}^{k}]) \geqslant \frac{1}{2}\frac{d^+}{d\sigma}\mid_{\sigma=s} W_2^2(\mathfrak{S} _\Psi^s[\rho_{i,h}^{k}], \rho_{i,h}^{k-1}) +\frac{\kappa}{2}W_2^2(\mathfrak{S} _\Psi^s[\rho_{i,h}^{k}], \rho_{i,h}^{k-1}).$$

By lower semi-continuity of $\Psi$, we have
\begin{eqnarray*}
\Psi(\rho_{i,h}^{k-1})-\Psi(\rho_{i,h}^{k}) &\geqslant & \Psi(\rho_{i,h}^{k-1})-\liminf_{s \searrow 0} \Psi(\s_\Psi^s[\rho_{i,h}^{k}])\\
& \geqslant & \limsup_{s \searrow 0} \left( \Psi(\rho_{i,h}^{k-1})-\Psi(\s_\Psi^s(\rho_{i,h}^{k})) \right)\\
& \geqslant & \limsup_{s \searrow 0} \left(\frac{1}{2}\frac{d^+}{d\sigma}\mid_{\sigma=s} W_2^2(\mathfrak{S} _\Psi^s[\rho_{i,h}^{k}], \rho_{i,h}^{k-1}) \right) +\frac{\kappa}{2}W_2^2(\rho_{i,h}^{k}, \rho_{i,h}^{k-1}).
\end{eqnarray*}
The last line is obtained thanks to the $W_2$-continuity of $s \mapsto \s_\Psi^s[\rho_{i,h}^k]$ in $s=0$. Moreover, the absolute continuity of $s \mapsto \mathfrak{S} _\Psi^s[\rho_{i,h}^k]$ implies
\begin{eqnarray*}
\limsup_{s \searrow 0} \left(\frac{1}{2}\frac{d^+}{d\sigma}\mid_{\sigma=s} W_2^2(\mathfrak{S} _\Psi^s[\rho_{i,h}^{k}], \rho_{i,h}^{k-1}) \right) \geqslant \limsup_{s \searrow 0} \frac{1}{s}\left( W_2^2(\mathfrak{S} _\Psi^s[\rho_{i,h}^{k}], \rho_{i,h}^{k-1}) - W_2^2(\rho_{i,h}^{k}, \rho_{i,h}^{k-1}) \right).
\end{eqnarray*}
But since $\rho_{i,h}^{k}$ minimizes $\E_{i,h}(\cdot | \grho_h^{k-1})$, we get, for all $s \geqslant 0$,
\begin{eqnarray*}
 W_2^2(\mathfrak{S} _\Psi^s[\rho_{i,h}^{k}], \rho_{i,h}^{k-1}) - W_2^2(\rho_{i,h}^{k}, \rho_{i,h}^{k-1})  \geqslant  2h \left( \F_i(\rho_{i,h}^{k}) -\F_i(\mathfrak{S} _\Psi^s[\rho_{i,h}^{k}]) \right) + 2h \left( \V_i(\rho_{i,h}^{k} | \grho_h^{k-1})-\V_i(\mathfrak{S} _\Psi^s[\rho_{i,h}^{k}]|\grho_h^{k-1}) \right).
\end{eqnarray*}

This concludes the proof.

\end{proof}

\begin{cor}
\label{cor dissipation}
Under the same hypotheses as in lemma \ref{lem dissipation}, let $\s_\Psi$ a $\kappa$-flow such that, for all $k\in \mathbb{N}$, the curve $s \mapsto \s_\Psi^s[ \rho^k_{i,h}]$ lies in $L^{m_i}(\Rn)$, is differentiable for $s>0$ and is continuous at $s=0$.\\
Let $\K_{i,\Psi} \, : \, \mathcal{P}_{2}^{ac}(\Rn) \mapsto ]-\infty, +\infty]$ be a functional such that
\begin{eqnarray}
\label{hyp K}
\liminf_{s \searrow 0} \left( -\frac{d}{d\sigma}_{|_{\sigma=s}} \left(\F_i(\s_\Psi^\sigma [\rho^k_{i,h}]) + \V_i(\mathfrak{S} _\Psi^\sigma[\rho_{i,h}^{k}]|\grho_{h}^{k-1}) \right) \right) \geqslant \K_{i,\Psi}(\rho_{i,h}^{k}|\grho_h^{k-1}).
\end{eqnarray}
Then, for all $k \in \mathbb{N}$,
\begin{eqnarray}
\Psi(\rho_{i,h}^{k-1})-\Psi(\rho_{i,h}^{k}) \geqslant h \K_{i,\Psi}(\rho_{i,h}^{k}|\grho_h^{k-1}) + \frac{\kappa}{2}W_2^2(\rho_{i,h}^{k}, \rho_{i,h}^{k-1}).
\end{eqnarray}

\end{cor}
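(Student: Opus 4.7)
The plan is to derive the corollary from Lemma \ref{lem dissipation} by showing that under the extra regularity hypotheses the dissipation $\D_{i,\Psi}(\rho_{i,h}^{k}\mid\grho_h^{k-1})$ is bounded below by $\K_{i,\Psi}(\rho_{i,h}^{k}\mid\grho_h^{k-1})$. Since the lemma already gives
$$\Psi(\rho_{i,h}^{k-1})-\Psi(\rho_{i,h}^{k}) \geqslant h\,\D_{i,\Psi}(\rho_{i,h}^{k}\mid \grho_{h}^{k-1}) + \tfrac{\kappa}{2}W_2^2(\rho_{i,h}^{k},\rho_{i,h}^{k-1}),$$
replacing $\D_{i,\Psi}$ by the pointwise lower bound $\K_{i,\Psi}$ will conclude.

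To carry this out, I would set $G(\sigma) := \F_i(\s_\Psi^\sigma[\rho_{i,h}^{k}]) + \V_i(\s_\Psi^\sigma[\rho_{i,h}^{k}] \mid \grho_{h}^{k-1})$. By the regularity stated in the hypotheses, the curve $\sigma \mapsto \s_\Psi^\sigma[\rho_{i,h}^{k}]$ is continuous at $0$ in the $W_2$ (hence in a weak) sense, lies in $L^{m_i}(\Rn)$, and is differentiable on $(0,+\infty)$; this is enough (using that $\F_i$ is a convex internal energy and $\V_i$ is linear with a bounded potential by \eqref{Hyp 2 V}) to ensure $G$ is absolutely continuous on $[0,s]$ for every $s>0$. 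The fundamental theorem of calculus then gives
$$G(0)-G(s) \;=\; -\int_0^s G'(\sigma)\,d\sigma,$$
so that
$$\D_{i,\Psi}(\rho_{i,h}^{k}\mid \grho_{h}^{k-1}) \;=\; \limsup_{s\searrow 0} \frac{1}{s}\int_0^s \bigl(-G'(\sigma)\bigr)\,d\sigma.$$

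The last step is the elementary inequality $\limsup_{s\searrow 0}\tfrac{1}{s}\int_0^s g(\sigma)\,d\sigma \geqslant \liminf_{\sigma\searrow 0}g(\sigma)$: for any $\eps>0$ there is $\delta>0$ such that $g(\sigma) \geqslant \liminf_{\sigma\searrow 0}g(\sigma)-\eps$ for $\sigma\in(0,\delta)$, which averages to the same bound on $(0,s)$ for $s<\delta$, and then $\eps\to 0$. Applying this with $g(\sigma)=-G'(\sigma)$ and invoking the hypothesis \eqref{hyp K} yields
$$\D_{i,\Psi}(\rho_{i,h}^{k}\mid \grho_{h}^{k-1}) \;\geqslant\; \liminf_{\sigma\searrow 0}\bigl(-G'(\sigma)\bigr) \;\geqslant\; \K_{i,\Psi}(\rho_{i,h}^{k}\mid \grho_{h}^{k-1}),$$
which plugged into the lemma gives the claimed inequality.

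The main obstacle I anticipate is purely technical: justifying that $G$ is absolutely continuous so that the fundamental theorem of calculus applies. The assumed $W_2$-absolute continuity of $s\mapsto \s_\Psi^s[\rho_{i,h}^{k}]$ combined with the $L^{m_i}$ integrability and the Lipschitz/boundedness properties of $V_i[\grho_h^{k-1}]$ (which control $\V_i$) should be enough, but one has to check the chain rule for $\F_i$ along the flow; everything else reduces to elementary measure-theoretic manipulations and the Lemma already established.
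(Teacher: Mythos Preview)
Your proposal is correct and follows essentially the same route as the paper: reduce to showing $\D_{i,\Psi}\geqslant\K_{i,\Psi}$, write the dissipation as an average of $-G'$ via the fundamental theorem of calculus, and pass the $\liminf$ inside. The only cosmetic difference is that the paper performs the change of variables $\sigma=sz$ to rewrite $\tfrac{1}{s}\int_0^s(-G'(\sigma))\,d\sigma=\int_0^1(-G'(sz))\,dz$ and then invokes Fatou's lemma, whereas you argue the inequality $\limsup_{s\searrow 0}\tfrac{1}{s}\int_0^s g\geqslant\liminf_{\sigma\searrow 0}g$ directly; these are the same step, and the paper is equally informal about the absolute-continuity/FTC justification you flag.
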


\begin{proof}
It is sufficient to show that $\D_{i,\Psi}(\cdot |\grho_h^{k-1})$ is bounded below by $\K_{i,\Psi}(\cdot |\grho_h^{k-1})$. The proof is as in corollary 4.3 of \cite{DFM}. The hypothese of $L^{m_i}$-régularity on $\s_\Psi$ imply that $s \mapsto \F_i(\s_\Psi^s[\rho^k_{i,h}])$ is differentiable for $s>0$ and continuous at $s=0$. We have the same regularity for $s \mapsto \V_i(\s_\Psi^s[\rho^k_{i,h}]|\grho_h^{k-1})$. By the fundamental theorem of calculus,
\begin{eqnarray*}
\D_{i,\Psi}(\rho_{i,h}^{k}|\grho_h^{k-1}) &=& \limsup_{s \searrow 0} \frac{1}{s} \left(\F_i(\rho_{i,h}^{k})-\F_i(\mathfrak{S}_\psi^s[\rho_{i,h}^{k}]) + \V_i(\rho_{i,h}^{k} | \grho_h^{k-1})-\V_i(\mathfrak{S} _\Psi^s[\rho_{i,h}^{k}]|\grho_h^{k-1})\right)\\
&=&\limsup_{s \searrow 0} \int_0^1\left( -\frac{d}{d\sigma}_{|_{\sigma=sz}} \left(\F_i(\s_\Psi^\sigma [\rho^k_{i,h}]) + \V_i(\mathfrak{S} _\Psi^\sigma[\rho_{i,h}^{k}]|\grho_h^{k-1}) \right) \right) \,dz\\
&\geqslant & \int_0^1 \liminf_{s \searrow 0}\left( -\frac{d}{d\sigma}_{|_{\sigma=sz}} \left(\F_i(\s_\Psi^\sigma [\rho^k_{i,h}]) + \V_i(\mathfrak{S} _\Psi^\sigma[\rho_{i,h}^{k}]|\grho_h^{k-1}) \right) \right) \,dz \geqslant \K_{i,\Psi}(\rho_{i,h}^{k}|\grho_h^{k-1}).
\end{eqnarray*}
The last line is obtained by Fatou's lemma and assumption \eqref{hyp K}. To conclude we apply lemma \ref{lem dissipation}.
\end{proof}

\subsection{Gradient estimate.}

\begin{prop}
\label{estimation gradient}
For all $i \in [\![ 1 ,l ]\!]$ such that $m_i>1$, there exists a constant $C$ which depends only on  $\rho_{i,0}$ such that 
$$ \|\rho_{i,h}^{m_i/2}\|_{L^2([0,T];H^1(\Rn))} \leqslant C(1+T)$$
for all $T>0$. 
\end{prop}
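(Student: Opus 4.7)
The plan is to apply Corollary \ref{cor dissipation} with the Boltzmann entropy $\Psi(\rho) := \int_{\Rn} \rho \log \rho$ as auxiliary functional. The associated $\kappa$-flow is the heat semigroup $\s_\Psi^s$ (with $\kappa = 0$), which instantaneously regularises any $\rho \in \mathcal{P}_2^{ac}(\Rn)$ to $\mathcal{C}^\infty \cap L^{m_i}$, is smooth for $s > 0$ and $W_2$-continuous at $s = 0$, so the regularity hypotheses of the corollary are met. The strategy is then a two-step computation: bound the dissipation of $\F_i + \V_i$ along the heat flow from below by a multiple of $\|\nabla (\rho_{i,h}^k)^{m_i/2}\|_{L^2}^2$, then sum and telescope in $k$.

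Using $\partial_s \s_\Psi^s[\rho] = \Delta \s_\Psi^s[\rho]$, an integration by parts and the hypothesis $F_i''(x) \geq C x^{m_i-2}$ from \eqref{hyp F}, together with $\nabla(\rho^{m_i/2}) = \tfrac{m_i}{2}\rho^{(m_i-2)/2}\nabla \rho$, yield
\begin{equation*}
-\frac{d}{d\sigma}\Bigl|_{\sigma=s}\F_i(\s_\Psi^\sigma[\rho]) = \int_{\Rn} F_i''(\s_\Psi^s[\rho])\,\bigl|\nabla \s_\Psi^s[\rho]\bigr|^2\,dx \geq \frac{4C}{m_i^2}\bigl\|\nabla (\s_\Psi^s[\rho])^{m_i/2}\bigr\|_{L^2(\Rn)}^2.
\end{equation*}
For the interaction term, two integrations by parts and the Hessian bound \eqref{Hyp 2 V} give
\begin{equation*}
-\frac{d}{d\sigma}\Bigl|_{\sigma=s}\V_i(\s_\Psi^\sigma[\rho]\mid\grho_h^{k-1}) = -\int_{\Rn}\Delta V_i[\grho_h^{k-1}]\,\s_\Psi^s[\rho]\,dx \geq -C,
\end{equation*}
since $\s_\Psi^s[\rho]$ has unit mass. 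Taking $s\searrow 0$, inserting the resulting lower bound into Corollary \ref{cor dissipation} with $\kappa=0$ and summing the telescoping entropy differences over $k=1,\dots,N$ with $N=\lfloor T/h\rfloor$ produces
\begin{equation*}
h\sum_{k=1}^{N}\bigl\|\nabla (\rho_{i,h}^{k})^{m_i/2}\bigr\|_{L^2(\Rn)}^2 \leq \frac{m_i^2}{4C}\bigl(\Psi(\rho_{i,0}) - \Psi(\rho_{i,h}^{N}) + CT\bigr).
\end{equation*}

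The principal obstacle, not present in the periodic setting of \cite{CL}, is that $\Psi$ is unbounded below on $\mathcal{P}_2^{ac}(\Rn)$. I overcome it exactly as $-\F_i$ was controlled for $m_i = 1$: the classical inequality $-\Psi(\rho) \leq C(1+M(\rho))^\alpha$ with $\alpha\in(0,1)$ combined with the uniform moment bound \eqref{estimation moment} yields $-\Psi(\rho_{i,h}^N) \leq C$, while $\Psi(\rho_{i,0}) < +\infty$ follows from $\F_i(\rho_{i,0}) < +\infty$ and the finite second moment of $\rho_{i,0}$. Therefore the right-hand side above is bounded by $C(1+T)$, which by the piecewise-constant interpolation \eqref{interpolation} gives the $L^2([0,T];\dot H^1)$-part of the claim.

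Finally, to upgrade to the full $H^1$ norm I integrate $F_i'' \geq C x^{m_i-2}$ twice with $F_i(0)=F_i'(0)=0$ to get $F_i(x) \geq C' x^{m_i}$; the uniform entropy bound \eqref{estimation entropie} then yields $\int_{\Rn}\rho_{i,h}^{m_i}(t)\,dx \leq C$ for every $t\in[0,T]$, so $\int_0^T \|\rho_{i,h}^{m_i/2}(t)\|_{L^2(\Rn)}^2\,dt \leq CT$. Adding this to the gradient estimate and adjusting $C$ proves the proposition.
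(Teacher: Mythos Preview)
Your proof is correct and follows essentially the same route as the paper: both apply Corollary~\ref{cor dissipation} with the Boltzmann entropy $\Psi(\rho)=\int\rho\log\rho$ and the heat semigroup as its $0$-flow, compute the dissipation of $\F_i+\V_i$ using $F_i''(x)\geq Cx^{m_i-2}$ and the Hessian bound \eqref{Hyp 2 V}, telescope over $k$, and control $\Psi(\rho_{i,0})$ and $-\Psi(\rho_{i,h}^N)$ via $\F_i(\rho_{i,0})<\infty$ and the moment estimate \eqref{estimation moment} respectively, before adding the $L^2$-part from \eqref{estimation entropie}. The only point where the paper is slightly more explicit is the passage $s\searrow 0$ in the gradient term, which requires lower semicontinuity of $\rho\mapsto\|\nabla\rho^{m_i/2}\|_{L^2}^2$ under $L^{m_i}$-convergence (the paper cites lemma~A.1 of \cite{DFM}); you invoke this implicitly.
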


Before starting the proof of the proposition \ref{estimation gradient}, we recall the definition of the Entropy functional ,
$$ E(\rho) = \int_{\Rn} \rho \log\rho, \text{ for all } \rho \in \mathcal{P}^{ac}(\Rn).$$
We know that this functional possesses a $\kappa$-flow, with $\kappa=0$ which is given by the heat semigroup (see for instance \cite{DS}, \cite{JKO} or \cite{V1}). In other words, for a given $\eta_0 \in \mathcal{P}_{2}^{ac}(\Rn)$, the curve $s \mapsto \eta(s):= \s_E^s[\eta_0]$ solves

$$ \left\{ 
\begin{array}{ll}
\partial_s \eta = \Delta \eta & \text{   on } \mathbb{R}^+ \times \Rn,\\

\eta(0)=\eta_0 &\text{   on } \Rn,\\
\end{array}\right.$$
in the classical sense. $\eta(s)$ is a positive density for all $s>0$ and is continuously differentiable as a map from $\R^+$ to $\mathcal{C}^\infty \cap L^1(\Rn)$. Moreover, if $\eta_0 \in L^m(\Rn)$, then $\eta(s)$ converges to $\eta_0$ in $ L^m(\Rn)$ when $s\searrow 0$.

\begin{proof}
Based on the facts set out above, $\s_E$ satisfies the hypotheses of the corollary \ref{cor dissipation}. We just have to define a suitable lower bound $\K_{i,E}$ to use it. The spatial regularity of $\eta(s)$ for all $s >0$ allows the following calculations. Thus for all $\boldsymbol{\mu} \in \mathcal{P}_{2}^{ac}(\Rn)^l$, we have
\begin{eqnarray*}
\partial_s \left( \F_i(\s_E^s[\eta_0]) + \V_i(\s_E^s[\eta_0] | \boldsymbol{\mu}) \right) & = & \int_{\Rn} \partial_s F_i(\eta) \,dx + \int_{\Rn} V_i[\boldsymbol{\mu}]\partial_s \eta(s,x) \, dx\\
& = & \int_{\Rn} F_i'(\eta(s,x)) \Delta \eta(s,x) \,dx + \int_{\Rn} V_i[\boldsymbol{\mu}]\Delta \eta(s,x) \, dx\\
& = & -\int_{\Rn} F_i''(\eta(s,x)) |\nabla \eta(s,x)|^2 \,dx + \int_{\Rn} \Delta( V_i[\boldsymbol{\mu}])\eta(s,x) \, dx.
\end{eqnarray*}
According to \eqref{hyp F}, $F_i''(x) \geqslant Cx^{m_i-2}$ thus
\begin{eqnarray*}
 \partial_s \left( \F_i(\s_E^s[\eta_0]) + \V_i(\s_E^s[\eta_0] | \boldsymbol{\mu}) \right)& \leqslant & -C\int_{\Rn} \eta(s,x)^{m_i-2} |\nabla \eta(s,x)|^2 \,dx + \int_{\Rn} \Delta( V_i[\boldsymbol{\mu}])\eta(s,x) \, dx\\
 &\leqslant & -C\int_{\Rn}|\nabla \eta(s,x)^{m_i/2}|^2 \,dx + \int_{\Rn} \Delta( V_i[\boldsymbol{\mu}])\eta(s,x) \, dx.
\end{eqnarray*}

Then we define
$$ \K_{i,E}(\rho | \boldsymbol{\mu}) := C\int_{\Rn}  |\nabla (\rho(s,x)^{m_i/2})|^2 \,dx - \int_{\Rn} \Delta (V[\boldsymbol{\mu}])\rho(s,x) \, dx.$$

\bigskip

We shall now establish that $\K_{i,E}$ satisfies \eqref{hyp K}. First of all, we notice that
\begin{eqnarray}
\label{liminf somme}
\liminf_{s \searrow 0} \left( -\frac{d}{d\sigma}_{|_{\sigma=s}} (\F_i(\s_E^\sigma [\rho^k_{i,h}]) + \V_i(\mathfrak{S} _E^\sigma[\rho_{i,h}^{k}]|\grho_h^{k-1}) \right) &\geqslant &\liminf_{s \searrow 0} \left( -\frac{d}{d\sigma}_{|_{\sigma=s}} \F_i(\s_E^\sigma [\rho^k_{i,h}]) \right)\\
& + &\liminf_{s \searrow 0} \left( -\frac{d}{d\sigma}_{|_{\sigma=s}} \V_i(\mathfrak{S} _E^\sigma[\rho_{i,h}^{k}]|\grho_h^{k-1}) \right).
\end{eqnarray}

Thanks to the proof of lemma 4.4 and with lemma A.1 of \cite{DFM}, we obtain
\begin{eqnarray}
\label{liminf energie}
\liminf_{s \searrow 0} \left( -\frac{d}{d\sigma}_{|_{\sigma=s}} (\F_i(\s_E^\sigma [\rho^k_{i,h}]) \right) \geqslant C\int_{\Rn}  |\nabla (\rho_{i,h}^k(x)^{m_i/2})|^2 \,dx.
\end{eqnarray}

Moreover, as $\s_E^s$ is continuous in $L^1(\Rn)$ at $s=0$ and according to \eqref{Hyp 2 V}, 
\begin{eqnarray}
\label{liminf V}
\liminf_{s \searrow 0} \left( -\frac{d}{d\sigma}_{|_{\sigma=s}} \V_i(\mathfrak{S} _E^\sigma[\rho_{i,h}^{k}]|\grho_h^{k-1}) \right) \geqslant - \int_{\Rn} \Delta (V_i[\grho_h^{k-1}])\rho_{i,h}^k(x) \, dx.
\end{eqnarray}

The combination of \eqref{liminf somme}, \eqref{liminf energie} and \eqref{liminf V} gives \eqref{hyp K} for $\K_{i,E}$. We apply corollary \ref{cor dissipation} and we get

\begin{eqnarray}
E(\rho_{i,h}^{k-1})-E(\rho_{i,h}^{k}) \geqslant h \K_{i,E}(\rho_{i,h}^{k}|\grho_h^{k-1}).
\end{eqnarray}

but since $\Delta( V_i[\grho] )\in L^\infty(\Rn)$ uniformly on $\grho$ \eqref{Hyp 2 V}, 
$$ Ch\int_{\Rn}  |\nabla (\rho_{i,h}^k(x)^{m_i/2})|^2 \,dx \leqslant E(\rho_{i,h}^{k-1})-E(\rho_{i,h}^{k}) +Ch.$$

Now we sum on $k$ from $1$ to $N=\lfloor \frac{T}{h}\rfloor$
\begin{eqnarray}
Ch\sum_{k=1}^N \| \nabla (\rho_{i,h}^k(x)^{m_i/2}) \|_{L^2(\Rn)}^2 \leqslant E(\rho_{i,0}) -E(\rho^N_{i,h}) +CT.
\end{eqnarray}

According to \cite{JKO} and \cite{DFM}, there exists a constant $C>0$ and $0<\alpha<1$ such that for all $\rho \in \mathcal{P}_{2}^{ac}(\Rn)$,
$$ -C(1+ M(\rho))^\alpha \leqslant E(\rho) \leqslant C \F_i(\rho).$$

Since for all $k,h$, $M(\rho_{i,h}^k)$ is bounded, according to \eqref{estimation moment} and the fact that $\F_i(\rho_{i,0}) < +\infty$ by \eqref{hyp energie initiale}, we have
$$ h\sum_{k=1}^N \| \nabla (\rho_{i,h}^k(x)^{m_i/2})\|_{L^2(\Rn)}^2  \leqslant C(1+T).$$

To conclude the proof, we use \eqref{hyp F} and \eqref{estimation entropie}.

\end{proof}

\section{Passage to the limit.}\label{limit}

\subsection{Weak and strong convergences.}

The first convergence result is obtained using the estimates on the distance \eqref{estimation distance} and on the energy $\F_i$ \eqref{estimation entropie}.

\begin{prop}
\label{cv faible}
Every sequences $(h_k)_{k\in \mathbb{N}}$ of time steps which tends to $0$ contains a subsequence, non-relabelled, such that
$\rho_{i,h_k}$ converges, uniformly on compact time intervals, in $W_2$ to a $\frac{1}{2}$-Hölder function $\rho_i \, : \, [0,+\infty[ \rightarrow \mathcal{P}_{2}^{ac}(\Rn)$. 
\end{prop}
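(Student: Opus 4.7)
The plan is to apply an Ascoli--Arzelà argument in the Wasserstein space, in the spirit of \cite[Prop.~3.3.1]{AGS}. First I would derive a $1/2$-Hölder bound in time from \eqref{estimation distance} by Cauchy--Schwarz: if $s\in((m-1)h,mh]$ and $t\in((N-1)h,Nh]$ with $m\leqslant N$, then
$$W_2(\rho_{i,h}(s),\rho_{i,h}(t))\leqslant\sum_{k=m}^{N-1}W_2(\rho_{i,h}^k,\rho_{i,h}^{k+1})\leqslant\sqrt{N-m}\Bigl(\sum_{k=m}^{N-1}W_2^2(\rho_{i,h}^k,\rho_{i,h}^{k+1})\Bigr)^{1/2}\leqslant C\sqrt{|t-s|+h},$$
which gives $W_2$-equicontinuity in $h$ of the piecewise constant curves $t\mapsto\rho_{i,h}(t)$. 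The uniform second moment bound \eqref{estimation moment} gives, via Markov's inequality, that $\{\rho_{i,h}(t)\}_{h>0}$ is tight for each fixed $t$, hence narrowly pre-compact.

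Second, the refined Ascoli--Arzelà theorem, taking the narrow topology as ambient Hausdorff topology and $W_2$ as a lower semicontinuous pseudo-distance on $\mathcal{P}_2(\Rn)$, furnishes a subsequence $h_k\to 0$ and a curve $\rho_i\colon[0,+\infty)\to\mathcal{P}_2(\Rn)$ such that $\rho_{i,h_k}(t)\to\rho_i(t)$ narrowly for every $t$, uniformly on compact time intervals, and the $1/2$-Hölder estimate passes to the limit by narrow lower semicontinuity of $W_2$. Absolute continuity of $\rho_i(t)$ follows from \eqref{estimation entropie} and narrow lower semicontinuity of $\F_i$: $\F_i(\rho_i(t))\leqslant\liminf_k\F_i(\rho_{i,h_k}(t))\leqslant C<+\infty$, which forces $\rho_i(t)\ll\mathcal{L}^n$ by the superlinearity of $F_i$.

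The step I expect to be the main obstacle is the upgrade from narrow convergence to $W_2$-convergence uniformly on compacts, since narrow convergence with merely bounded second moments does not in general imply $W_2$-convergence (mass can concentrate at infinity). The real-valued map $t\mapsto M(\rho_{i,h}(t))^{1/2}=W_2(\rho_{i,h}(t),\delta_0)$ is uniformly bounded and uniformly equicontinuous (triangle inequality applied to Step 1), so a further diagonal extraction yields uniform convergence on compacts to some $L(t)$, with $M(\rho_i(t))\leqslant L(t)^2$ by narrow lower semicontinuity. The matching upper bound---which prevents mass from escaping to infinity---follows from the controlled growth of the second moment along the scheme already exploited in the proof of Proposition~\ref{estimates}. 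Combined with narrow convergence this yields $W_2(\rho_{i,h_k}(t),\rho_i(t))\to 0$ via the standard characterisation of $W_2$-convergence as narrow convergence plus convergence of second moments, and uniformity in $t$ on compact intervals is inherited from the equicontinuity established in Step 1.
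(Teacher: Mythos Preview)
Your strategy is the paper's: derive the discrete $1/2$-Hölder bound from \eqref{estimation distance} via Cauchy--Schwarz, apply the refined Arzelà--Ascoli theorem \cite[Prop.~3.3.1]{AGS} with the narrow topology as ambient topology, and obtain absolute continuity of each $\rho_i(t)$ from \eqref{estimation entropie} together with the superlinearity of $F_i$ (the paper invokes Dunford--Pettis, you invoke narrow lower semicontinuity of $\F_i$; the content is the same).

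You are right to isolate the upgrade from narrow to $W_2$-convergence as the delicate step---the paper does not discuss it and simply cites \cite[Prop.~3.3.1]{AGS}, which a priori only delivers pointwise narrow convergence to a $W_2$-continuous limit. However, your proposed fix has a genuine gap. You assert that the ``matching upper bound'' $\limsup_k M(\rho_{i,h_k}(t))\leqslant M(\rho_i(t))$ follows from ``the controlled growth of the second moment along the scheme already exploited in the proof of Proposition~\ref{estimates}''. But that argument produces only the \emph{uniform bound} $M(\rho_{i,h}(t))\leqslant C$, not any upper semicontinuity of $M$ along the sequence. A uniform second-moment bound is perfectly compatible with mass of order $1/k$ sitting at distance of order $\sqrt{k}$---precisely the scenario that obstructs $W_2$-convergence---so nothing in Proposition~\ref{estimates} rules this out. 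To close the argument you would need a genuine uniform-integrability statement for $|x|^2$ with respect to $\{\rho_{i,h}(t)\}_{h,t\in[0,T]}$; alternatively, observe that the uniform second-moment bound \emph{does} yield uniform integrability of $|x|$, hence $W_1$-convergence uniformly on compact time intervals, and check whether this weaker conclusion suffices for the passages in Section~\ref{limit} (this hinges on whether hypothesis~\eqref{Hyp 3 V} is used with $W_2$ essentially or could be stated with $W_1$).
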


\begin{proof}
The estimation on the sum of distances gives us for all $t,s\leqslant 0$,
$$ W_2(\rho_{i,h}(t,\cdot),\rho_{i,h}(s,\cdot)) \leqslant C(|t-s|+h)^{1/2},$$
with $C$ independ of $h$.

According to the proposition 3.3.1 of \cite{AGS} and using a diagonal argument, at least for a subsequence, for all $i$, $\rho_{i,h_k}$ converges uniformly on compact time intervals in $W_2$ to a $\frac{1}{2}$-Hölder function $\rho_i \, : \, [0,+\infty[ \rightarrow \mathcal{P}_{2}(\Rn)$. To conclude we show that for all $t\geqslant 0$, $\rho(t,\cdot) \in \mathcal{P}_{2}^{ac}(\Rn)$. But as $F_i$ is superlinear, Dunford-Pettis' theorem completes the proof.

\end{proof}

With the previous proposition, we can pass to the limit in the case $m_i=1$ because $P_i(\rho_{i,h})=\rho_{i,h}$ and in the term $\nabla(V_i[\rho_{i,h}])$ thanks to the hypothesis \eqref{Hyp 3 V}. Unfortunately, it is not enough to pass to the limit in $P_i(\rho_{i,h})$ when $m_i>1$. In the next proposition, we use proposition \ref{estimation gradient} to get a stronger convergence.

\begin{prop}
\label{cv forte}
For all $i\in [\![ 1,l ]\!] $ such that $m_i>1$, $\rho_{i,h}$ converges to $\rho_i$ in $L^{m_i}( ]0,T[ \times \Rn)$ and $P_i(\rho_{i,h}) $ converges to $P_i(\rho_i)$ in $L^1(]0,T[ \times \Rn)$, for all $T>0$. 
\end{prop}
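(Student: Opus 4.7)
The plan is to apply the Rossi-Savar\'e extension of the Aubin-Lions lemma (Theorem \ref{Rossi-Savaré} of the introduction) to the family $(\rho_{i,h})_h$, combining two ingredients. The time-compactness is already available from Proposition \ref{cv faible}: the estimate $W_2(\rho_{i,h}(t),\rho_{i,h}(s))\leqslant C(|t-s|+h)^{1/2}$ provides equi-continuity in time with respect to $W_2$, which metrizes narrow convergence on tight families. The space-compactness is provided by Proposition \ref{estimation gradient}, which tells us that $(\rho_{i,h}^{m_i/2})_h$ is bounded in $L^2(]0,T[;H^1(\mathbb{R}^n))$; hence for every $R>0$, Rellich's theorem makes $(\rho_{i,h}^{m_i/2})_h$ relatively compact in $L^2(]0,T[\times B_R)$, which in turn yields relative compactness of $(\rho_{i,h})_h$ in $L^{m_i}(]0,T[\times B_R)$.

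The main obstacle is that $\mathbb{R}^n$ is not bounded, so Rellich only gives \emph{local} compactness and I need to upgrade it to global strong convergence. Here I would use tightness coming from the uniform second-moment estimate \eqref{estimation moment}: by Chebyshev,
\begin{equation*}
\int_{|x|>R}\rho_{i,h}(t,x)\,dx\;\leqslant\;\frac{M(\rho_{i,h}(t))}{R^2}\;\leqslant\;\frac{C}{R^2},
\end{equation*}
uniformly in $h$ and $t\in[0,T]$. Combined with the uniform $L^{m_i}$ bound coming from \eqref{estimation entropie} together with the coercivity $F_i(x)\geqslant c\,x^{m_i}$ for $x$ large (consequence of \eqref{hyp F}), an interpolation of the form $\int_{|x|>R}\rho^{m_i}\leqslant(\int_{|x|>R}\rho)^\theta\|\rho\|_{L^{p}}^{1-\theta}$ for an appropriate exponent, or equivalently a direct De la Vall\'ee-Poussin/Dunford-Pettis argument, shows that the tails $\int_0^T\int_{|x|>R}\rho_{i,h}^{m_i}\,dx\,dt$ vanish uniformly in $h$ as $R\to\infty$. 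Together with the local strong convergence on each $B_R$, this yields $\rho_{i,h}\to\rho_i$ strongly in $L^{m_i}(]0,T[\times\mathbb{R}^n)$.

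For the second claim, I exploit the pressure bound $P_i(x)\leqslant C(x+x^{m_i})$ from \eqref{hyp F}. Extracting a further subsequence, the $L^{m_i}$-strong convergence delivers pointwise a.e.\ convergence $\rho_{i,h}\to\rho_i$, and by continuity of $P_i$ also $P_i(\rho_{i,h})\to P_i(\rho_i)$ a.e. To conclude with $L^1$ convergence via Vitali's theorem, I verify equi-integrability of $\{P_i(\rho_{i,h})\}_h$: the $L^{m_i}$-convergence implies that $(\rho_{i,h}^{m_i})_h$ is uniformly integrable (being convergent in $L^1$), and $(\rho_{i,h})_h$ is uniformly integrable thanks to the mass normalization plus the moment bound (preventing escape at infinity) and the $L^{m_i}$ bound (preventing concentration). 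Then $|P_i(\rho_{i,h})|\leqslant C(\rho_{i,h}+\rho_{i,h}^{m_i})$ is uniformly integrable, and Vitali yields $P_i(\rho_{i,h})\to P_i(\rho_i)$ in $L^1(]0,T[\times\mathbb{R}^n)$, for any $T>0$.
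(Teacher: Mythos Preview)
Your overall strategy coincides with the paper's: apply the Rossi--Savar\'e theorem with $g=W_2$ for the time part and a coercive integrand built from the $H^1$ bound on $\rho^{m_i/2}$ together with the second moment for the space part, then identify the limit via Proposition~\ref{cv faible} and pass to $P_i$ using the growth bound \eqref{hyp F}. However, one step in your argument does not go through as written.

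The tail estimate $\int_0^T\int_{|x|>R}\rho_{i,h}^{m_i}\,dx\,dt\to 0$ cannot be obtained from the moment bound and the uniform $L^{m_i}$ bound alone. A H\"older interpolation of the form $\int_{|x|>R}\rho^{m_i}\leqslant\bigl(\int_{|x|>R}\rho\bigr)^{\theta}\|\rho\|_{L^p}^{\,\cdots}$ produces a \emph{positive} exponent $\theta$ only if $p>m_i$; with $p=m_i$ one gets $\theta=0$ and no decay. Concretely, a family of probability densities carrying a tall thin bump of mass $\sim R^{-2}$ near $|x|=R$ keeps $M(\rho)$ and $\|\rho\|_{L^{m_i}}$ bounded while $\int_{|x|>R/2}\rho^{m_i}$ stays bounded away from zero. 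The missing ingredient is precisely the gradient estimate: the bound on $\|\rho^{m_i/2}\|_{H^1}$ from Proposition~\ref{estimation gradient}, combined with a Gagliardo--Nirenberg inequality, supplies the extra integrability $p>m_i$ that makes the tail decay work. This is exactly how the paper proceeds in Lemma~\ref{sublevel}: the functional $\G_i(\rho)=\|\rho^{m_i/2}\|_{H^1(\mathbb{R}^n)}+M(\rho)$ is shown to have $L^{m_i}$-relatively compact sublevels via Fr\'echet--Kolmogorov, with the uniform decay at infinity obtained through Gagliardo--Nirenberg interpolation between $M(\rho)$ and $\|\nabla(\rho^{m_i/2})\|_{L^2}$.

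Two minor remarks. First, the sentence ``Rellich's theorem makes $(\rho_{i,h}^{m_i/2})_h$ relatively compact in $L^2(]0,T[\times B_R)$'' conflates roles: Rellich only gives compactness of the sublevels of $\G_i$ in $L^{m_i}_x$, and it is the Rossi--Savar\'e machinery that turns this, together with the $W_2$ equicontinuity, into space-time convergence. Second, your Vitali argument for $P_i(\rho_{i,h})\to P_i(\rho_i)$ in $L^1$ is correct and equivalent to the paper's dominated/Fatou argument once strong $L^{m_i}$ convergence is in hand.
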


The proof of this proposition is obtained by using an extention of Aubin-Lions lemma given by Rossi and Savaré in \cite{RS} (theorem 2) and recalled in \cite{DFM} (theorem 4.9).

\begin{thm}[th. 2 in \cite{RS}]
\label{Rossi-Savaré}
On a Banach space $X$, let be given
\begin{itemize}
\item a normal coercive integrand $\G \, : \, X \rightarrow \R^+$, i.e, $\G$ is l.s.c and its sublevels are relatively compact in $X$,
\item a pseudo-distance $g \, : \, X\times X \rightarrow [0,+\infty]$, i.e, $g$ is l.s.c and $ \left[ g(\rho,\mu)=0, \, \rho,\mu \in X \text{ with } \G(\rho),\G(\mu) < \infty \right] \Rightarrow \rho=\mu$.
\end{itemize}

Let $U$ be a set of measurable functions $u \, : \, ]0,T[ \rightarrow X$ with a fixed $T>0$. Under the hypotheses that 
\begin{align}
\label{condition U}
\sup_{u\in U} \int_0^T \G(u(t)) \,dt <+\infty \qquad \text{  and  } \qquad \lim_{h\searrow 0} \sup_{u \in U} \int_0^{T-h} g(u(t+h),u(t)) \, dt =0,
\end{align} 
$U$ contains a subsequence $(u_n)_{n \in \N}$ which converges in measure with respect to $t\in]0,T[$ to a limit $u_\star \, : \, ]0,T[ \rightarrow X$.
\end{thm}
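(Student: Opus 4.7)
The approach is to establish an infinite-dimensional Fréchet--Kolmogorov-type criterion for convergence in measure. The plan is to combine the spatial precompactness encoded by $\G$ (whose sublevels are compact in $X$) with the time-tightness encoded by the pseudo-distance hypothesis, using the separation property of $g$ on sublevels of $\G$ as the bridge that turns smallness in $g$ into smallness in the norm of $X$.

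The first step I would carry out is a quantitative equivalence of topologies on each compact sublevel $K_M := \{x \in X : \G(x) \leq M\}$: for every $\varepsilon > 0$ there exists $\delta = \delta(M,\varepsilon) > 0$ such that $x,y \in K_M$ and $g(x,y) \leq \delta$ imply $\|x-y\|_X \leq \varepsilon$. This is proved by contradiction: otherwise, sequences $(x_n),(y_n) \subset K_M$ with $g(x_n,y_n)\to 0$ but $\|x_n-y_n\|_X \geq \varepsilon$ would, by compactness of $K_M$, subconverge to limits $x \neq y$; the lower semicontinuity of $g$ would then force $g(x,y)=0$, contradicting the separation hypothesis.

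Next I would convert the two hypotheses in \eqref{condition U} into measure-theoretic estimates uniform in $u \in U$. Markov's inequality applied to $\int_0^T \G(u(t))\,dt \leq C$ yields $|\{t \in (0,T) : \G(u(t)) > M\}| \leq C/M$, so outside a time set of arbitrarily small measure $u(t)$ lies in the compact set $K_M$. A second application of Markov to the pseudo-distance integral, combined with the first step, gives that for every $\varepsilon > 0$ there exists $h_0 > 0$ such that, for all $h \leq h_0$ and all $u \in U$,
\begin{equation*}
\bigl| \{ t \in (0,T-h) : \|u(t+h)-u(t)\|_X > \varepsilon \} \bigr|
\end{equation*}
is arbitrarily small. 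This is the required uniform equicontinuity-in-measure estimate.

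The final and most delicate step is the diagonal extraction of a subsequence $(u_n)$ converging in measure to a measurable limit $u_\star : (0,T) \to X$. The strategy is to pick a countable dense set $\{t_j\} \subset (0,T)$, iteratively extract sub-subsequences along which $u_n(t_j)$ converges in $X$ (using compactness of $K_M$ on the large subset of indices where $\G(u_n(t_j))$ stays bounded, secured by Fatou applied to $\int \G(u_n)\,dt$), and then diagonalize. The equicontinuity-in-measure estimate from the previous paragraph propagates this grid-convergence into a genuine Cauchy-in-measure property on $(0,T)$, producing $u_\star$. The main obstacle is that $g$ is only a pseudo-distance, so identifying the pointwise grid-limits as values of a well-defined measurable function $u_\star$ — consistently across the different sublevels $K_M$ used during the extraction — requires a careful use of the separation property, the lower semicontinuity of $\G$ and $g$, and a measurable selection at limit points; this is exactly the point where the Rossi--Savaré machinery is needed beyond a routine Aubin--Lions argument.
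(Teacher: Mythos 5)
First, a remark on scope: the paper does not prove this statement at all --- it is quoted verbatim as Theorem 2 of \cite{RS} and used as a black box --- so your proposal can only be measured against the original Rossi--Savar\'e argument. Your first two steps are correct and are genuine ingredients of any proof: since $\G$ is l.s.c.\ its sublevels $K_M$ are closed, hence compact, and the contradiction argument using the l.s.c.\ of $g$ and the separation property does give the uniform modulus ``$g(x,y)\leqslant\delta$ and $x,y\in K_M$ imply $\|x-y\|_X\leqslant\eps$''; the two Markov estimates then yield the uniform smallness of $\left|\{t:\|u(t+h)-u(t)\|_X>\eps\}\right|$ exactly as you describe.

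The gap is in the third step, and it is not a removable technicality. The hypotheses \eqref{condition U} are purely integral, so they give no control whatsoever on $u_n(t_j)$ at any \emph{fixed} time $t_j$: the set $\{t:\G(u_n(t))>M\}$ may sweep across any prescribed countable grid as $n$ varies, and Fatou only guarantees $\liminf_n\G(u_n(t))<\infty$ off a null set that changes with each extracted subsequence, so the grid cannot be fixed in advance and the diagonal extraction stalls. Worse, even granting convergence on a dense grid, the translation estimate is of the form ``for each fixed shift $h$, the set of bad $t$ is small''; by Fubini this compares $u_n(t)$ with $u_n(s)$ for \emph{most} $s$ near $t$, but never with $u_n(t_j)$ at a single (measure-zero) grid point, so the equicontinuity-in-measure cannot propagate grid convergence to Cauchy-in-measure. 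The usual Fr\'echet--Kolmogorov remedy --- mollify in time and pass through the averages --- is also blocked here, since no Bochner integrability of $u_n$ is assumed and the sublevels of $\G$ need not be convex. This is precisely why Rossi and Savar\'e argue differently: tightness ($\sup_u\int_0^T\G(u)\,dt<\infty$ with compact sublevels) gives a subsequence converging to a limit Young measure $(\nu_t)_{t\in]0,T[}$; the l.s.c.\ of $g$ together with the second condition in \eqref{condition U} forces $\int_0^T\int_{X\times X}g\,d(\nu_t\otimes\nu_t)\,dt=0$; and the separation property of $g$ on sublevels of $\G$ then shows each $\nu_t$ is a Dirac mass $\delta_{u_\star(t)}$, which is equivalent to convergence in measure. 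Your outline would need to be rebuilt around that (or an equivalent) compactification to close.
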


To apply this theorem, we define on $X:=L^{m_i}( \Rn)$, as in \cite{DFM}, $g$ by
$$ g(\rho,\mu) :=\left\{ \begin{array}{ll}
W_2(\rho,\mu) & \text{if } \rho,\mu \in \mathcal{P}_2(\Rn),\\
+\infty & \text{otherwise},
\end{array}\right.$$
and $\G_i$ by
$$ \G_i(\rho):= \left\{ \begin{array}{ll}
\|\rho^{m_i/2} \|_{H^1(\Rn)} + M(\rho) & \text{if } \rho \in \mathcal{P}_{2}^{ac}(\Rn) \text{ and } \rho^{m_i/2} \in H^1(\Rn),\\
+\infty & \text{otherwise}.
\end{array}\right.$$

Now, we show that $\G_i$ satisfies theorem \ref{Rossi-Savaré} conditions. 
\begin{lem}
\label{sublevel}
For all $i\in [\![ 1,l ]\!] $ such that $m_i>1$, $\G_i$ is l.s.c and its sublevels are relatively compact in $L^{m_i}(\Rn)$.
\end{lem}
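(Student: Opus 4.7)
The approach splits into two claims, handled independently. Lower semi-continuity will follow from a subsequence extraction combined with weak lower semi-continuity of the $H^1$-norm and Fatou's lemma for the second moment. Relative compactness of sublevels in $L^{m_i}(\Rn)$ will be obtained by applying the Riesz-Fréchet-Kolmogorov criterion to the auxiliary family $u_n:=\rho_n^{m_i/2}$ in $L^2(\Rn)$, using the elementary identity $\|u_n\|_{L^2(\Rn)}^2=\|\rho_n\|_{L^{m_i}(\Rn)}^{m_i}$, so that relative compactness of $(u_n)$ in $L^2$ is equivalent to relative compactness of $(\rho_n)$ in $L^{m_i}$.

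For lower semi-continuity, I take a sequence $\rho_n\to\rho$ in $L^{m_i}(\Rn)$ along which $\liminf_n \G_i(\rho_n)$ is attained and is finite, and extract a further subsequence converging almost everywhere. Then $\rho\geqslant 0$ a.e., and combining the $L^{m_i}$-bound (giving uniform $L^1$-equi-integrability) with the uniform second-moment bound (giving tightness) lets one pass to the limit in $\int\rho_n=1$ via Vitali, yielding $\rho\in\mathcal{P}_2^{ac}(\Rn)$; Fatou then gives $M(\rho)\leqslant\liminf M(\rho_n)$. The elementary inequalities $|a^{m_i/2}-b^{m_i/2}|^2\leqslant C(a^{m_i-2}+b^{m_i-2})|a-b|^2$ for $m_i\geqslant 2$, and $|a^{m_i/2}-b^{m_i/2}|^2\leqslant |a-b|^{m_i}$ for $1<m_i<2$, combined with Hölder upgrade $L^{m_i}$-convergence of $\rho_n$ to $L^2$-convergence $u_n\to\rho^{m_i/2}$. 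Since $(u_n)$ is bounded in $H^1(\Rn)$, extracting a weakly convergent subsequence (its limit being forced to equal $\rho^{m_i/2}$ by the $L^2$ convergence) gives $\rho^{m_i/2}\in H^1(\Rn)$ together with $\|\rho^{m_i/2}\|_{H^1}\leqslant\liminf\|u_n\|_{H^1}$; adding the two estimates yields $\G_i(\rho)\leqslant\liminf\G_i(\rho_n)$.

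For compactness, fix a sequence with $\G_i(\rho_n)\leqslant C$ and set $u_n:=\rho_n^{m_i/2}$, so $(u_n)$ is bounded in $H^1(\Rn)$. The translation estimate $\|u_n(\cdot+\tau)-u_n\|_{L^2}\leqslant|\tau|\,\|\nabla u_n\|_{L^2}\leqslant C|\tau|$ provides the equicontinuity under translations needed for Riesz-Fréchet-Kolmogorov in $L^2(\Rn)$; the remaining tightness condition reads $\sup_n\int_{|x|>R}\rho_n^{m_i}\to 0$ as $R\to\infty$. I obtain it by splitting the integral along the level set $\{\rho_n\leqslant\varepsilon\}$ versus $\{\rho_n>\varepsilon\}$: on the former $\rho_n^{m_i}\leqslant\varepsilon^{m_i-1}\rho_n$ contributes at most $\varepsilon^{m_i-1}$, while on the latter Chebyshev combined with the moment bound gives $|\{\rho_n>\varepsilon\}\cap\{|x|>R\}|\leqslant C/(\varepsilon R^2)$, and the Sobolev embedding $H^1(\Rn)\hookrightarrow L^{2^\ast}(\Rn)$ applied to $u_n$ supplies a uniform $L^q$-bound for $\rho_n$ with $q:=m_i\cdot 2^\ast/2>m_i$, so Hölder controls the contribution by $C\,|\{\rho_n>\varepsilon,|x|>R\}|^{1-m_i/q}$. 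Choosing $\varepsilon$ small and then $R$ large delivers the tightness, and Riesz-Fréchet-Kolmogorov concludes.

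The only delicate step is the tightness at infinity, since the uniform second-moment bound only controls $\rho_n$ at the $L^1$ level, while the conclusion requires decay of $\rho_n^{m_i}$; the level-set decomposition together with the Sobolev embedding provides precisely the bridge between the two scales. In low dimension ($n\leqslant 2$) the exponent $2^\ast$ has to be replaced by any finite $p>2$, leaving the rest of the argument unchanged.
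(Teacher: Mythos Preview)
Your proof is correct, and the overall architecture coincides with the paper's: reduce the compactness of sublevels of $\G_i$ in $L^{m_i}(\Rn)$ to compactness of $B_c=\{\rho^{m_i/2}:\rho\in A_c\}$ in $L^2(\Rn)$ via the Fr\'echet--Kolmogorov criterion, using the $H^1$-bound for the translation estimate and combining the second-moment bound with higher integrability for the decay at infinity.

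The differences lie in the execution of two sub-steps. For lower semi-continuity, the paper simply invokes lemma~A.1 of \cite{DFM}, whereas you give a self-contained argument (Vitali for the mass constraint, Fatou for the moment, weak $H^1$-l.s.c.\ after upgrading $L^{m_i}$-convergence of $\rho_n$ to $L^2$-convergence of $\rho_n^{m_i/2}$); your version has the advantage of being explicit. For the tightness at infinity of $\int_{|x|>R}\rho_n^{m_i}$, the paper uses a one-shot H\"older splitting $|x|^{1/n}\eta^{1/nm_i}\cdot\eta^{2-1/nm_i}$ followed by the Gagliardo--Nirenberg inequality, obtaining a bound $\leqslant C R^{-1/n}$ directly; you instead decompose along the level set $\{\rho_n\leqslant\varepsilon\}$ versus $\{\rho_n>\varepsilon\}$, bound the first piece by $\varepsilon^{m_i-1}$, and control the second via Chebyshev on the moment (giving small measure) together with the Sobolev embedding $H^1\hookrightarrow L^{2^\ast}$ (giving an $L^q$-bound with $q>m_i$) and H\"older. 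Both routes use the same two ingredients---the moment bound and Sobolev-type higher integrability of $\rho^{m_i/2}$---but the paper's interpolation is more direct and yields an explicit decay rate, while your level-set argument is perhaps conceptually clearer in separating the ``small density'' and ``large density'' contributions.
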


\begin{proof}
The l.s.c of $\G_i$ on $L^{m_i}(\Rn)$ follows from lemma A.1 in \cite{DFM}. To complete the proof we have to show that sublevels $A_c:=\left\{\rho \in L^{m_i}(\Rn) \, |\, \G_i(\rho) \leqslant c \right\}$ of $\G_i$ are relatively compact in $L^{m_i}(\Rn)$.

To do this, we prove that $B_c := \left\{ \eta=\rho^{m_i/2} \, | \, \rho \in A_c \right\}$ is relatively compact in $L^2(\Rn)$ and since the map $j \, : \, L^2(\Rn) \rightarrow L^{m_i}(\Rn)$, with $j(\eta)=\eta^{2/m_i}$, is continuous, $A_c=j(B_c)$ will be relatively compact in $L^{m_i}(\Rn)$.\\ 
We want to apply the Frechét-Kolmogorov theorem to show that $B_c$ is relatively compact in $L^2(\Rn)$.

\begin{itemize}
\item {\it $B_c$ is bounded in $L^2(\Rn)$:} Since $\eta^2 =\rho^{m_i}$ with $\G_i(\rho) \leqslant c$, it is straightforward to see
$$ \int_{\Rn} \eta^2 \leqslant c.$$

\item {\it $B_c$ is tight under translations:} for every $\eta \in B_c$ and $h\in\Rn$ we have that
$$ \int_{\Rn} |\eta(x+h)-\eta(x)|^2 \,dx \leqslant |h|^2 \int_{\Rn} \left| \int_0^1 |\nabla \eta(x+zh) |\,dz \right|^2 \,dx \leqslant  |h|^2\int_{\Rn} |\nabla \eta(x) |^2 \,dx \leqslant c|h|^2,$$
thus the left hand side converges to $0$ uniformly on $B_c$ as $|h|\searrow 0$.

\item {\it Elements of $B_c$ are unifomly decaying at infinity:} For all $\eta \in B_c$ and $R >0$, we have 
$$ \int_{|x|>R} \eta^2 \, dx \leqslant \frac{1}{R^{1/n}} \int_{\Rn} |x|^{1/n} \eta^{1/nm_i} \eta^{2-1/nm_i} \,dx.$$

If we use Hölder inequality with $p=2n$ and $q=\frac{2n}{2n-1}$, we get

$$ \int_{|x|>R} \eta^2 \, dx \leqslant \frac{1}{R^{1/n}} \left( \int_{\Rn} |x|^{2}\eta^{2/m_i} \right)^{1/2n}\left( \int_{\Rn}\eta^{2(2m_i-1/n)/m_i(2-1/n)} \right)^{\frac{2n-1}{2n}}.$$

As $\eta^{2/m_i}=\rho$ with  $\G_i(\rho) \leqslant c$, we have
$$\int_{\Rn} |x|^{2}\eta^{2/m_i} \leqslant c.$$

To bound the other term we use the Gagliardo-Nirenberg inequality: for $1\leqslant q,r \leqslant +\infty$, we have
$$ \| u \|_{L^p} \leqslant C \| \nabla u \|_{L^r}^\alpha \|u\|_{L^q}^{1-\alpha},$$
for all $0 < \alpha <1$ and for $p$ given by
$$ \frac{1}{p}= \alpha \left(\frac{1}{r}-\frac{1}{n} \right)+(1-\alpha)\frac{1}{q}.$$

We choose $p=\frac{2(2m_i-1/n)}{m_i(2-1/n)}$, $q=r=2$ and $ \alpha=\frac{m_i-1}{2(2m_i-1/n)}$ (since $m_i>1$ we have $0 < \alpha <1$) then we obtain:


$$ \int_{\Rn}\eta^{2(2m_i-1/n)/m_i(2-1/n)} \leqslant \left(\int_{\Rn} |\nabla \eta |^2 \right)^{\alpha p/2}\left(\int_{\Rn} \eta^2 \right)^{(1-\alpha)p/2} .$$
but since $\eta =\rho^{m_i/2}$ with $\G_i(\rho) \leqslant c$, the second term is bounded then
$$ \int_{|x|>R} \eta^2 \, dx \leqslant \frac{C}{R^{1/n}} \rightarrow 0,$$
as $R$ goes to $+\infty$.
\end{itemize}

We conclude thanks to Frechét-Kolmogorov theorem.
\end{proof}

\begin{proof}[Proof of the proposition \ref{cv forte}] We want to apply theorem \ref{Rossi-Savaré} with $X:=L^{m_i}(\Rn)$, $\G:=\G_i$, $g$ and $U:=\{ \rho_{i,h_k} \, | \, k \in \N\}$. According to lemma \ref{sublevel}, $\G_i$ satisfies the hypotheses of the theorem. It's obvious that it is the same for $g$. Thus we only have to check conditions for $U$. The first condition is satisfied because of \eqref{estimation moment} and \eqref{estimation gradient} and the second is satisfied because of \eqref{estimation distance} (the proof is done in \cite{DFM} proposition 4.8, for example).

According to theorem \ref{Rossi-Savaré} and using a diagonal argument, there exists a subsequence, not-relabeled, such that for all $i$ with $m_i>1$, there exists $\tilde{\rho}_i \, : \, ]0,T[ \rightarrow L^{m_i}(\Rn)$ such that $\rho_{i,h_k}$ converges in measure with respect to $t$ in $L^{m_i}(\Rn)$ to $\tilde{\rho}_i$. Moreover, as $\rho_{i,h_k}(t)$ converges in $W_2$ for all $t\in [0,T]$ to $\rho_i(t)$ (proposition \ref{cv faible}) then $\tilde{\rho}_i=\rho_i$. Now since convergence in measure implies a.e convergence up to a subsequence, we may also assume that $\rho_{i,h_k}(t)$ converges strongly in $L^{m_i}(\Rn)$ to $\rho_i(t)$ $t$-a.e. Now, thanks to \eqref{estimation entropie} and \eqref{hyp F} we have
$$ \int_{\Rn} \rho_{i,h}^{m_i} (t,x) \,dx \leqslant C \F_i(\rho_{i,h}(t,\cdot)) \leqslant C,$$
then Lebesgue's dominated convergence theorem implies that $\rho_{i,h_k}$ converges strongly in $L^{m_i}(]0,T[\times\Rn)$ to $\rho_i$.

To conclude the proof we have to show that $P_i(\rho_{i,h}) $ converges to $P_i(\rho_i)$ in $L^1(]0,T[ \times \Rn)$.
First of all, up to a subsequence, we may assume that there exists $g \in L^{m_i}(]0,T[ \times \Rn)$ such that
$$ \rho_{i,h_k} \rightarrow \rho_i \; (t,x)\text{-a.e   and } \rho_{i,h_k} \leqslant g \; (t,x)\text{-a.e}.$$
Thus according to \eqref{hyp F}
$$ P_i(\rho_{i,h_k}) \rightarrow P_i(\rho_i) \; (t,x)\text{-a.e   and } 0 \leqslant P(\rho_{i,h_k}) \leqslant C(\rho_{i,h_k}+g^{m_i})\; (t,x)\text{-a.e}.$$
So when we pass to the limit we have $(t,x)$-a.e
$$  0 \leqslant P(\rho_{i}) \leqslant C(\rho_{i}+g^{m_i})\in L^1(]0,T[ \times \Rn).$$

Then $C(\rho_{i,h_k}+\rho_{i}+2g^{m_i}) -|P_i(\rho_{i,h_k}) - P_i(\rho_i)| \geqslant 0$ and
\begin{eqnarray*}
2CT+2C\iint_{]0,T[ \times \Rn} g(x)^{m_i} \, dxdt &=& \iint_{]0,T[ \times \Rn} \liminf \left(C(\rho_{i,h_k}+\rho_{i}+2g^{m_i}) -|P_i(\rho_{i,h_k}) - P_i(\rho_i)| \right)\\
&\leqslant & 2CT+2C\iint_{]0,T[ \times \Rn} g^{m_i}(x) \, dxdt +\liminf \iint_{]0,T[ \times \Rn}(-|P_i(\rho_{i,h_k}) - P_i(\rho_i)| )\\
&\leqslant & 2CT+2C\iint_{]0,T[ \times \Rn} g^{m_i}(x) \, dxdt -\limsup \iint_{]0,T[ \times \Rn}|P_i(\rho_{i,h_k}) - P_i(\rho_i)|. 
\end{eqnarray*}
To do these computations, we used that $\|\rho_{i,h_k}\|_{L^1(]0,T[ \times \Rn)}=\|\rho_{i}\|_{L^1(]0,T[ \times \Rn)}=T$ and Fatou's lemma. Since $g \in L^{m_i}(]0,T[ \times \Rn)$, we obtain
$$  \limsup \iint_{]0,T[ \times \Rn}|P_i(\rho_{i,h_k}) - P_i(\rho_i)| \leqslant 0,$$
which concludes the proof.

\end{proof}

\subsection{Limit of the discrete system.}

In this section, we pass to the limit in the discrete system of propsosition \ref{equation discrete}. In the following, we consider $\phi_i \in \mathcal{C}^\infty_c([0,T)\times\Rn)$ and $N=\lfloor \frac{T}{h} \rfloor$.

\begin{proof}[ proof of theorem \ref{existence Rn}.] We will pass to the limit in all terms in proposition \ref{equation discrete}.

\smallskip
\begin{itemize}
\item {\it Convergence of the remainder term:} By definition of $\mathcal{R}$, we have
$$ \int_{\Rn\times \Rn} \mathcal{R}[\phi_i(t_{k},\cdot)](x,y) d\gamma_{i,h}^k (x,y) \leqslant \frac{1}{2} \|\nabla ^2 \phi_i \|_{L^\infty ([0,T]\times\Rn)} W_2^2(\rho_{i,h}^{k},\rho_{i,h}^{k+1}).$$
and according to the estimate \eqref{estimation distance}, we get
$$ \left| \sum_{k=0}^{N-1}\int_{\Rn\times \Rn} \mathcal{R}[\phi_i(t_{k},\cdot)](x,y) d\gamma_{i,h}^k (x,y) \right| \leqslant C\sum_{k=0}^{N-1}W_2^2(\rho_{i,h}^{k},\rho_{i,h}^{k+1}) \leqslant Ch \rightarrow 0.$$

\item {\it Convergence of the linear term:}
\begin{eqnarray*}
\left|\int_0^T \int_{\Rn} \rho_{i,h}(t,x) \partial_t \phi_i(t,x) \,dxdt-\int_0^T \int_{\Rn} \rho_i(t,x) \partial_t \phi_i(t,x)\,dxdt \right|\leqslant CT \sup_{t\in [0,T]} W_2(\rho_{i,h}(t,\cdot),\rho_i(t,\cdot))\rightarrow 0,
\end{eqnarray*}
when $h \searrow 0$ because of propostion \ref{cv faible}.

\item {\it Convergence of the diffusion term:} 
\begin{eqnarray*}
&&\left|h\sum_{k=0}^{N-1} \int_{\Rn} P_i(\rho_{i,h}^{k+1}(x)) \cdot \Delta \phi_i(t_k,x)  \, dx- \int_0^T \int_{\Rn} P_i(\rho_i(t,x)) \Delta \phi_i(t,x)  \,dxdt \right|\\
&\leqslant & CT\| D^3 \phi_i \|_{L^\infty}h +\left|   \int_0^T \int_{\Rn}\left(P_i(\rho_{i,h}(t,x))-P_i(\rho(t,x)) \right)\Delta \phi_i(t,x)  \,dxdt \right|.
\end{eqnarray*}
If $m_i=1$, the right hand side converges to $0$ because of proposition \ref{cv faible} and otherwise it goes to $0$ because of proposition \ref{cv forte}. 

\item {\it Convergence of the interaction term:}
\begin{eqnarray*}
&&\left| h\sum_{k=0}^{N-1}\int_{\Rn}  \nabla(V_i [\grho_{h}^{k}])(x)\cdot \nabla \phi_i(t_k,x)\rho_{i,h}^{k+1}(x) \,dx -\int_0^T \int_{\Rn} \nabla(V_i[\grho(t,\cdot)])(x)\cdot \nabla \phi_i(t,x)\rho_i(t,x) \,dxdt \right|\\
&\leqslant & \left| h\sum_{k=0}^{N-1}\int_{\Rn}  \nabla(V_i [\grho_{h}^{k}])(x)\cdot \nabla \phi_i(t_k,x)\rho_{i,h}^{k+1}(x) \,dx -\sum_{k=0}^{N-1}\int_{t_k}^{t_{k+1}} \int_{\Rn} \nabla(V_i[\grho(t,\cdot)])(x)\cdot \nabla \phi_i(t_k,x)\rho_{i,h}^{k+1}(x) \,dxdt  \right|\\
&+& \left|\sum_{k=0}^{N-1}\int_{t_k}^{t_{k+1}} \int_{\Rn} \nabla(V_i[\grho(t,\cdot)])(x)\cdot (\nabla \phi_i(t_k,x) - \nabla \phi_i(t,x) )\rho_{i,h}^{k+1}(x) \,dxdt \right|\\
&+& \left|\sum_{k=0}^{N-1}\int_{t_k}^{t_{k+1}} \int_{\Rn} \nabla(V_i[\grho(t,\cdot)])(x)\cdot \nabla \phi_i(t,x) (\rho_{i,h}^{k+1}(x) -\rho_i(t,x)) \,dxdt \right|\\
&\leqslant & J_1+J_2+J_3.
\end{eqnarray*}

\begin{itemize}
\item As $\rho_{i,h}$ converges weakly $L^1(]0,T[ \times \Rn)$ to $\rho_i$ and $\left(\nabla(V_i[\grho])\cdot \nabla \phi_i\right) \in L^{\infty} ([0,T] \times \Rn)$, then
$$ J_3 \rightarrow 0 \text{ as } h\rightarrow 0.$$

\item For $ J_2$,  we use the fact that $\nabla \phi_i$ is a Lipschitz function and that $\nabla(V_i[\grho])$ is bounded thanks to \eqref{Hyp 2 V}, and then,
$$ J_2 \leqslant CT \|D^2 \phi_i \|_{L^{\infty} ([0,T] \times \Rn)} h \rightarrow 0.$$

\item Using assumption \eqref{Hyp 3 V}, we have
\begin{eqnarray*}
J_1 &\leqslant & C \| \nabla \phi_i \|_{L^{\infty} ([0,T] \times \Rn)} \sum_{k=0}^{N-1}\int_{t_k}^{t_{k+1}}  W_2(\grho_{h}^{k},\grho(t,\cdot)) \,dt\\
&\leqslant & C \| \nabla \phi_i \|_{L^{\infty} ([0,T] \times \Rn)} \sum_{k=0}^{N-1}\int_{t_k}^{t_{k+1}}(W_2(\grho_{h}^{k},\grho_{h}^{k+1})+W_2(\grho_{h}^{k+1},\grho(t,\cdot))) \,dt\\
&\leqslant & C \| \nabla \phi_i \|_{L^{\infty} ([0,T] \times \Rn)} \left( h\sum_{k=0}^{N-1}W_2(\grho_{h}^{k},\grho_{h}^{k+1}) + \sum_{k=0}^{N-1}\int_{t_k}^{t_{k+1}} W_2(\grho_{h}^{k+1},\grho(t,\cdot)) \,dt \right)\\
&\leqslant & C \| \nabla \phi_i \|_{L^{\infty} ([0,T] \times \Rn)} \left( T\sum_{k=0}^{N-1}W_2^2(\grho_{h}^{k},\grho_{h}^{k+1}) + \int_0^T W_2(\grho_{h}(t,\cdot),\grho(t,\cdot)) \,dt  \right)
\end{eqnarray*}
\end{itemize}
According to \eqref{estimation distance}, we obtain
$$ T\sum_{k=0}^{N-1}W_2^2(\grho_{h}^{k},\grho_{h}^{k+1}) \leqslant lCT h \rightarrow 0$$
when $h\searrow0$. Moreover,
$$\int_0^T W_2(\grho_{h}(t,\cdot),\grho(t,\cdot)) \,dt \leqslant T\sup_{t\in [0,T]} W_2(\grho_{h}(t,\cdot),\grho(t,\cdot))\rightarrow 0,$$
when $h$ goes to $0$, which proves that
$$ J_1 \rightarrow 0 \text{ as } h\rightarrow 0.$$

\end{itemize}

If we combine all these convergences, theorem \ref{existence Rn} is proved.
\end{proof}

\section{The case of a bounded domain $\Omega$.}\label{borné}

In this section, we work on a smooth bounded domain $\Omega$ of $\Rn$ and only with one density but, as in the whole space, the result readily extends to systems. Our aim is to solve \eqref{systeme}. We remark that $\Omega$ is not taken convex so we can not use the flow interchange argument anymore because this argument uses the displacement convexity of the Entropy. Moreover since $\Omega$ is bounded, the solution has to satisfy some boundary conditions contrary to the periodic case \cite{CL} or in $\Rn$.  In our case, we study \eqref{systeme} with no flux boundary condition, which is the natural boundary condition for gradient flows, i.e we want to solve
\begin{eqnarray}
\label{equation borné}
 \left\{\begin{array}{ll}
 \partial_t \rho -  \dive(\rho \nabla (V[\rho])) -\Delta P(\rho) =0 & \text{   on } \mathbb{R}^+ \times \Omega,\\
(\rho \nabla (V[\rho])+\nabla P(\rho))\cdot \nu =0 &\text{   on } \R^+ \times \partial \Omega,\\
\rho(0, \cdot)= \rho_{0} &\text{   on } \Rn,\\
\end{array}\right.
\end{eqnarray}
where $\nu$ is the outward unit normal to $\partial \Omega$. 

\smallskip
We say that $\rho \, : \, [0, +\infty[ \rightarrow \mathcal{P}^{ac}(\Omega)$ is a weak solutions of \eqref{equation borné}, with $F \in \h_m$, if $\rho \in \mathcal{C}([0,+\infty[;\mathcal{P}^{ac}(\Omega)) \cap L^{m}(]0,T[ \times \Omega)$, $P(\rho) \in L^1(]0,T[\times \Omega)$, $\nabla P(\rho) \in \mathcal{M}^n([0,T]\times \Omega)$ for all $T<\infty$ and if for all $\varphi \in \mathcal{C}^\infty_c([0,+\infty[ \times \Rn)$, we have
$$ \int_0^\infty \int_\Omega \left[(\partial_t \varphi -\nabla \varphi \cdot \nabla (V[\rho])) \rho  -\nabla P(\rho) \cdot \nabla \varphi\right] = -\int_\Omega \varphi(0,x) \rho_{0}(x).$$
Since test functions are in $\mathcal{C}^\infty_c([0,+\infty[ \times \Rn)$, we do not impose that they vanish on the boundary of $\Omega$, which give Neumann boundary condition.\\

\begin{thm}
\label{existence solution sur borné}
Let $F \in \h_m$ for $m\geqslant 1$ and let $V$ satisfies \eqref{Hyp 1 V}, \eqref{Hyp 2 V}, \eqref{Hyp 3 V}.
If we assume that $\rho_0 \in \mathcal{P}^{ac}(\Omega)$ satisfies
\begin{align}
\label{hyp energie initiale borné}
\mathcal{F}(\rho_{0})+\mathcal{V}(\rho_{0}|\rho_{0}) < +\infty,
\end{align}
with
$$ \F(\rho) :=\left\{\begin{array}{ll}
\int_{\Omega} F(\rho(x))\,dx & \text{ if } \rho \ll \mathcal{L}^n_{\vert \Omega},\\
+\infty & \text{ otherwise,}
\end{array}\right. \text{  and  } \V(\rho| \mu):= \int_{\Omega} \V[\mu] \rho \,dx.$$
then \eqref{equation borné} admits at least one weak solution.
\end{thm}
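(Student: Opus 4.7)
The plan is to mimic the whole-space proof of Theorem \ref{existence Rn}, replacing only the flow-interchange $H^1$ estimate on $\rho_h^{m/2}$ (which is unavailable because $\Omega$ is not convex and the heat semigroup is no longer a $0$-flow for the entropy on $\mathcal{P}_2^{ac}(\Omega)$) by a $BV$ estimate on the pressure $P(\rho_h)$ obtained directly from the Euler--Lagrange equation of the JKO step. First I would set up the semi-implicit JKO scheme on $\mathcal{P}^{ac}(\Omega)$: given $\rho_h^{k-1}$, minimize $\rho\mapsto W_2^2(\rho,\rho_h^{k-1})+2h(\F(\rho)+\V(\rho|\rho_h^{k-1}))$ over $\mathcal{P}^{ac}(\Omega)$. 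Since $\Omega$ is bounded the second moment is automatic, so the analogues of Proposition \ref{existence et unicité minimiseur}, Proposition \ref{equation discrete}, and the standard estimates $\F(\rho_h^k)\leqslant C$ and $\sum_{k}W_2^2(\rho_h^k,\rho_h^{k+1})\leqslant Ch$ in Proposition \ref{estimates} carry over with only cosmetic changes.

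The heart of the proof is the $BV$ estimate. I would perturb $\rho_h^{k+1}$ by $\rho_\tau=(Id+\tau\xi)_\sharp \rho_h^{k+1}$ with $\xi\in\mathcal{C}_c^\infty(\Omega,\R^n)$ (compact support inside $\Omega$ guarantees that $\rho_\tau$ is supported in $\Omega$ for $\tau$ small). The same first variation computation as in Proposition \ref{equation discrete} yields
\begin{equation*}
\int_\Omega P(\rho_h^{k+1})\dive\xi\,dx=\frac{1}{h}\int_{\Omega\times\Omega}(y-x)\cdot\xi(y)\,d\gamma_h^k(x,y)+\int_\Omega \nabla(V[\rho_h^k])\cdot\xi\,\rho_h^{k+1}\,dx.
\end{equation*}
Bounding the right-hand side by $\|\xi\|_\infty(W_1(\rho_h^{k+1},\rho_h^k)/h+C)\leqslant \|\xi\|_\infty(W_2(\rho_h^{k+1},\rho_h^k)/h+C)$ and taking the supremum over $\|\xi\|_\infty\leqslant 1$ gives
\begin{equation*}
|DP(\rho_h^{k+1})|(\Omega)\leqslant \frac{W_2(\rho_h^{k+1},\rho_h^k)}{h}+C.
\end{equation*}
Multiplying by $h$, summing in $k$ up to $N=\lfloor T/h\rfloor$, and using Cauchy--Schwarz together with $\sum_k W_2^2\leqslant Ch$ (so $\sum_k W_2\leqslant\sqrt{N}\sqrt{Ch}\leqslant C\sqrt{T}$) yields the uniform bound $\int_0^T |DP(\rho_h(t))|(\Omega)\,dt\leqslant C(1+T)$. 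Together with $\|P(\rho_h(t))\|_{L^1(\Omega)}\leqslant C$ (from \eqref{hyp F} and the entropy bound), this gives $P(\rho_h)$ uniformly bounded in $L^1(]0,T[;BV(\Omega))$.

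Strong compactness then follows from Rossi--Savaré (Theorem \ref{Rossi-Savaré}) applied on $X=L^m(\Omega)$ with $\G(\rho)=\|P(\rho)\|_{BV(\Omega)}+\F(\rho)$ if $\rho\in\mathcal{P}^{ac}(\Omega)$ and $+\infty$ otherwise, and $g=W_2$ extended by $+\infty$ outside $\mathcal{P}_2(\Omega)$. Sublevels of $\G$ are relatively compact in $L^m(\Omega)$: if $\G(\rho_n)\leqslant c$ then $P(\rho_n)$ is bounded in $BV(\Omega)$, so by Rellich's theorem on $BV$ there is an $L^1$-limit $v$, and because $P\in\mathcal{H}_m$ is strictly increasing and continuous, $\rho_n\to P^{-1}(v)$ almost everywhere, which combined with the uniform $L^m$ bound coming from $\F(\rho_n)\leqslant c$ and \eqref{hyp F} promotes the convergence to $L^m$ by a Vitali/dominated argument. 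The time condition $\int_0^{T-h} g(u(t+h),u(t))\,dt\to 0$ is exactly the Wasserstein estimate $\sum W_2^2\leqslant Ch$, proved as in the whole space. This produces a subsequence with $\rho_h\to\rho$ strongly in $L^m(]0,T[\times\Omega)$ and, arguing as in the end of Proposition \ref{cv forte}, $P(\rho_h)\to P(\rho)$ in $L^1$.

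Finally, I pass to the limit in the discrete equation exactly as in Section \ref{limit}: the remainder term vanishes from $\sum W_2^2\leqslant Ch$; the linear-in-$\rho$ term uses uniform-in-time $W_2$ convergence; the diffusion term uses strong $L^1$ convergence of $P(\rho_h)$; the interaction term uses \eqref{Hyp 2 V}, \eqref{Hyp 3 V} and again $W_2$ convergence. Because the test functions in the definition of weak solution of \eqref{equation borné} are taken in $\mathcal{C}^\infty_c([0,+\infty[\times\R^n)$ (not required to vanish on $\partial\Omega$), the integration by parts in $\dive(\rho\nabla V[\rho])$ and in $\Delta P(\rho)$ produces the no-flux Neumann condition automatically, so no extra boundary analysis is needed. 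The main obstacle is clearly the $BV$ estimate: producing it requires choosing the right class of test vector fields (compactly supported in $\Omega$, so as to stay inside the domain), noticing that the Euler--Lagrange identity gives control of $\int P(\rho_h^{k+1})\dive\xi$ rather than of $\int\rho_h^{k+1}\dive\xi$, and keeping the scaling tight enough via Cauchy--Schwarz so that the $1/h$ factor is absorbed by $\sum W_2\leqslant C\sqrt T$.
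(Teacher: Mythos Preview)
Your overall strategy matches the paper's: the same semi-implicit JKO scheme, the same key $BV$ bound on $P(\rho_h^{k+1})$ extracted from the Euler--Lagrange identity with compactly supported vector fields, and the same Rossi--Savar\'e compactness (your integrand $\G(\rho)=\|P(\rho)\|_{BV}+\F(\rho)$ is an equivalent variant of the paper's $\|\rho^m\|_{BV}$). The gap is in how you obtain the discrete equation for test functions that do \emph{not} vanish on $\partial\Omega$, and hence in how the Neumann condition is actually recovered.

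You assert that Proposition~\ref{equation discrete} carries over with cosmetic changes and then pass to the limit ``exactly as in Section~\ref{limit}'', using only strong $L^1$ convergence of $P(\rho_h)$ for the diffusion term. But the first variation is taken with $\xi\in\mathcal{C}^\infty_c(\Omega,\R^n)$, so setting $\xi=\nabla\varphi$ yields the $\int P(\rho_h^{k+1})\Delta\varphi$ form of the discrete equation only for $\varphi$ compactly supported in $\Omega$. Passing to the limit with such $\varphi$ gives a distributional solution in the interior, not the weak formulation of \eqref{equation borné} with $\varphi\in\mathcal{C}^\infty_c([0,+\infty[\times\R^n)$. Your last paragraph then argues that the Neumann condition ``is produced automatically'' because the definition of weak solution allows test functions not vanishing on $\partial\Omega$; this is circular, since it is precisely the derivation of the discrete equation for those test functions that is missing.

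The paper fixes this by squeezing one more consequence out of your $BV$ computation. From the first-variation identity with $\xi\in\mathcal{C}^\infty_c(\Omega)$ one sees that the distributional gradient of $P(\rho_h^{k+1})$ equals the $L^1$ function $-\tfrac{y-T_k(y)}{h}\rho_h^{k+1}-\nabla(V[\rho_h^k])\rho_h^{k+1}$, so $P(\rho_h^{k+1})\in W^{1,1}(\Omega)$ and the pointwise equality
\[
(y-T_k(y))\rho_h^{k+1}+h\nabla(V[\rho_h^k])\rho_h^{k+1}+h\nabla P(\rho_h^{k+1})=0\qquad\text{a.e. on }\Omega
\]
holds. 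Dotting this with $\nabla\varphi$ for arbitrary $\varphi\in\mathcal{C}^\infty_c([0,T)\times\R^n)$ (no support restriction in $\Omega$) gives a discrete equation with $\int_\Omega\nabla P(\rho_h^{k+1})\cdot\nabla\varphi$ rather than $\int_\Omega P(\rho_h^{k+1})\Delta\varphi$. For the limit in that term the paper uses the uniform bound $\int_0^T\|\nabla P(\rho_h)\|_{L^1(\Omega)}\,dt\leqslant C$ together with the strong $L^1$ convergence of $P(\rho_h)$ to obtain $\nabla P(\rho_h)\rightharpoonup\nabla P(\rho)$ in $\mathcal{M}^n([0,T]\times\Omega)$ against $\nabla\varphi\in\mathcal{C}_b$. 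That is where the no-flux boundary condition is genuinely captured; it is not a cosmetic adaptation of the $\R^n$ argument.
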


The proof of this theorem is different from the one on $\Rn$ because we will not use the flow interchange argument of Matthes, McCann and Savaré to find strong convergence since $\Omega$ is not assumed convex. First, we will find an a.e equality using the first variation of energies in order to have a discrete equation, as in proposition \ref{equation discrete}. Then, we will derive an new estimate on the gradient of some power of $\rho_h$  from this a.e equality. To conclude, we will use again the refined version of Aubin-Lions lemma of Rossi and Savaré in \cite{RS}.

On $\Omega$ we can define, with the semi-implicit JKO scheme, the sequence $(\rho_{h}^k)_k$ but this time we minimize 
$$ \rho \mapsto \E_h(\rho|\rho_h^{k-1}):=\frac{1}{2h}W_2^2(\rho,\rho_h^{k-1}) + \F(\rho) + \V(\rho|\rho_h^{k-1})$$
on $\mathcal{P}(\Omega)$. The proof of existence and uniqueness of $\rho_{h}^k$ is the same as in proposition \ref{existence et unicité minimiseur}. It is even easier because on a bounded domain $\F$ is bounded from below for all $m\geqslant 1$. We find also the same estimates than in the proposition \ref{estimates} on the functional and the distance (see for example \cite{A},\cite{CL}).\\

Now we will establish a discrete equation satisfied by the piecewise interpolation of the sequence $(\rho_{h}^k)_k$ defined by, for all $k\in \mathbb{N}$,
 $$ \rho_{h}(t)= \rho_{h}^k \text{ if } t\in ((k-1)h,kh].$$

\smallskip
\begin{prop}
For every $k\geqslant 0$, we have 
\begin{eqnarray}
\label{a.e equality}
(y-T_k(y))\rho_{h}^{k+1} +h\nabla(V[\rho_h^{k}])\rho_{h}^{k+1}+h\nabla (P(\rho_{h}^{k+1}))=0 \; \; a.e \text{ on } \Omega,
\end{eqnarray}
where $T_k$ is the optimal transport map between $\rho_{h}^{k+1}$ and $\rho_{h}^{k}$. Then $\rho_h$ satisfies
\begin{eqnarray}
\int_0^T \int_{\Omega} \rho_{h}(t,x)  \partial_t \varphi(t,x) \,dxdt &=& h\sum_{k=0}^{N-1}\int_{\Omega}  \nabla (V [\rho_{h}^{k}])(x)  \cdot \nabla \varphi (t_k,x)\rho_{h}^{k+1}(x) \,dxdt \nonumber\\
&&  +h\sum_{k=0}^{N-1} \int_{\Omega} \nabla P(\rho_{h}^{k+1}(x)) \cdot  \nabla \varphi (t_k,x)  \, dx \label{equation discrete borné}\\
&& +\sum_{k=0}^{N-1} \int_{\Omega \times \Omega} \mathcal{R}[\varphi(t_k,\cdot)](x,y) d\gamma_k (x,y)\nonumber\\
&& - \int_{\Omega} \rho_{0}(x) \varphi(0,x) \, dx,\nonumber
\end{eqnarray}
with $N= \left\lceil\frac{T}{h} \right\rceil$, for all $\phi \in \mathcal{C}^\infty_c([0,T) \times \Rn)$, $\gamma_k$ is the optimal transport plan in $\Gamma (\rho_{h}^{k},\rho_{h}^{k+1})$ and
$$ |\mathcal{R}[\phi](x,y)| \leqslant \frac{1}{2} \|D^2 \phi \|_{L^\infty (\R \times \Rn)} |x- y|^2.$$

\end{prop}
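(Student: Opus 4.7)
The plan is to adapt the first-variation computation from the proof of Proposition~\ref{equation discrete} to the bounded domain $\Omega$, but to keep the diffusion term in the ``gradient form'' $h\int \nabla P(\rho_h^{k+1})\cdot \nabla\varphi$ rather than integrating by parts to get $-h\int P\Delta\varphi$, so that no boundary term is generated on $\partial\Omega$. The key intermediate step is to prove \eqref{a.e equality} as the Euler--Lagrange identity associated to \emph{interior} perturbations of $\rho_h^{k+1}$; then \eqref{equation discrete borné} follows by pairing \eqref{a.e equality} with $\nabla\varphi(t_k,\cdot)$ and telescoping in $k$.

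First I would fix $\xi \in \mathcal{C}^\infty_c(\Omega,\Rn)$, let $\Psi_\tau$ be the flow of $\xi$, and set $\rho_\tau := (\Psi_\tau)_\sharp \rho_h^{k+1}$. Because $\mathrm{supp}(\xi)$ is compactly contained in the open set $\Omega$, for $|\tau|$ small $\Psi_\tau$ maps $\Omega$ into $\Omega$, so $\rho_\tau \in \mathcal{P}(\Omega)$ is admissible for the minimization of $\E_h(\cdot|\rho_h^k)$. The three local limsup identities \eqref{variation distance}--\eqref{variation terme linéaire} carry over verbatim (their derivation is purely local and the interior support of $\xi$ prevents any boundary contribution), and after replacing $\xi$ by $-\xi$ to turn the resulting inequality into an equality, and using that $\gamma_k = (T_k\times \mathrm{Id})_\sharp \rho_h^{k+1}$, I obtain
\begin{equation*}
\int_\Omega (y-T_k(y))\cdot \xi(y)\,\rho_h^{k+1}(y)\,dy + h\int_\Omega \nabla(V[\rho_h^k])\cdot \xi\,\rho_h^{k+1}\,dy - h\int_\Omega P(\rho_h^{k+1})\,\dive\xi\,dy = 0
\end{equation*}
for every $\xi \in \mathcal{C}^\infty_c(\Omega,\Rn)$.

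This identity says exactly that, in $\mathcal{D}'(\Omega)^n$, one has $h\nabla P(\rho_h^{k+1}) = -(y-T_k(y))\rho_h^{k+1} - h\nabla(V[\rho_h^k])\rho_h^{k+1}$. The right-hand side belongs to $L^1(\Omega)^n$: the second summand is bounded by $\|\nabla V[\rho_h^k]\|_{L^\infty}$ times a probability density thanks to \eqref{Hyp 2 V}, while Cauchy--Schwarz gives $\int_\Omega |y-T_k(y)|\rho_h^{k+1}(y)\,dy \leq W_2(\rho_h^k,\rho_h^{k+1})<+\infty$ for the first. Hence $\nabla P(\rho_h^{k+1})$ is actually an $L^1$ vector field and \eqref{a.e equality} holds a.e.\ on $\Omega$.

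To derive \eqref{equation discrete borné}, I would then take the inner product of \eqref{a.e equality} with $\nabla\varphi(t_k,\cdot)$ and integrate over $\Omega$. The transport term equals $\iint_{\Omega\times\Omega}(y-x)\cdot\nabla\varphi(t_k,y)\,d\gamma_k(x,y)$, which by a second-order Taylor expansion of $\varphi(t_k,\cdot)$ around $y$ rewrites as $\int_\Omega\varphi(t_k,\cdot)(\rho_h^{k+1}-\rho_h^k)\,dx + \iint \mathcal{R}[\varphi(t_k,\cdot)]\,d\gamma_k$ with precisely the announced quadratic remainder bound. Summing the resulting one-step identity over $k=0,\ldots,N-1$ and reorganising the telescoping sum by the same discrete Abel-summation in time as in the second step of Proposition~\ref{equation discrete} (extending $\varphi$ to $[-h,0)$ by $\varphi(0,\cdot)$ to account for the initial-datum term) yields \eqref{equation discrete borné}. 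The only delicate point in this plan is the first step: one must check that the interior perturbation remains admissible (which uses only $\mathrm{supp}(\xi)\subset\subset\Omega$) and, more importantly, that the distributional Euler--Lagrange identity upgrades to the pointwise a.e.\ statement \eqref{a.e equality} via the $L^1$ bound on its right-hand side. Once \eqref{a.e equality} is in hand, the remainder of the argument is a mechanical rewriting of the $\Rn$-case that avoids the Laplacian entirely.
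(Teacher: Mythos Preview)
Your proposal is correct and follows essentially the same route as the paper: derive the Euler--Lagrange identity from interior perturbations $\xi\in\mathcal C^\infty_c(\Omega,\Rn)$, read it as a distributional equality $h\nabla P(\rho_h^{k+1})=-(y-T_k(y))\rho_h^{k+1}-h\nabla(V[\rho_h^k])\rho_h^{k+1}$, observe the right-hand side is $L^1$ (the paper phrases this as $P(\rho_h^{k+1})\in BV(\Omega)$ with $L^1$ gradient, hence $W^{1,1}$), and then pair with $\nabla\varphi(t_k,\cdot)$ and Abel-sum in $k$. The only point you leave implicit that the paper makes explicit is $P(\rho_h^{k+1})\in L^1(\Omega)$ (from \eqref{hyp F} and \eqref{estimation entropie}), which is needed so that its distributional gradient is defined; otherwise the arguments coincide.
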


\begin{proof}
First, we prove the equality \eqref{a.e equality}. As in proposition \ref{equation discrete}, taking the first vartiation in the semi-implicit JKO scheme, we find
for all $\xi \in \mathcal{C}_c^\infty(\Omega;\Rn)$,
\begin{equation}
\label{première variation borné}
\int_{\Omega} (y-T_k(y))\cdot \xi(y) \rho_{h}^{k+1}(y)\,dy+h\int_{\Omega} \nabla(V[\rho_h^{k}])\cdot \xi \rho_{h}^{k+1} -h\int_\Omega P(\rho_{h}^{k+1}) \dive(\xi) =0,
\end{equation}
where $T_k$ is the optimal transport map between $\rho_{h}^{k+1}$ and $\rho_{h}^{k}$. Now we claim that $P(\rho_{h}^{k+1}) \in W^{1,1}(\Omega)$. Indeed, since $F$ controls $x^m$ and $P$ is controlled by $x^m$ then \eqref{estimation entropie} gives $P(\rho_{h}^{k+1}) \in L^{1}(\Omega)$. Moreover, \eqref{première variation borné} gives 

\begin{eqnarray*}
 \left| \int_\Omega P(\rho_{h}^{k+1}) \dive(\xi) \right| \leqslant \left[ \int_{\Omega} \frac{|y-T_k(y)|}{h}\rho_h^{k+1} +C \right]  \|\xi \|_{L^{\infty}(\Omega)} \leqslant \left[ \frac{W_2(\rho_h^k,\rho_h^{k+1})}{h} +C\right] \|\xi\|_{L^{\infty}(\Omega)}.
\end{eqnarray*}
This implies $P(\rho_h^{k+1}) \in BV(\Omega)$ and $\nabla P(\rho_h^{k+1})= \nabla V[\rho_h^{k}] \rho_h^{k+1} + \frac{Id-T_k}{h}\rho_h^{k+1}$ in $\mathcal{M}^n(\Omega)$. And, since $ \nabla V[\rho_h^{k}] \rho_h^{k+1} + \frac{Id-T_k}{h}\rho_h^{k+1} \in L^1(\Omega)$, we have $P(\rho_h^{k+1})\in W^{1,1}(\Omega)$ and \eqref{a.e equality}.

\smallskip

Now, we verify that $\rho_h$ statisfies \eqref{equation discrete borné}. We start to take the scalar product between \eqref{a.e equality} and $\nabla \varphi$ with $\varphi \in \mathcal{C}^\infty_c([0,T)\times \Rn)$, and we find, for all $t\in[0,T)$,
\begin{equation}
\label{prem var apre IPP}
\int_{\Omega} (y-T_k(y))\cdot \nabla \varphi(t,y) \rho_{h}^{k+1}(y)\,dy+h\int_{\Omega} \nabla(V[\rho_h^{k}])(y)\cdot \nabla \varphi(t,y) \rho_{h}^{k+1}(y) \,dy +h\int_\Omega \nabla(P(\rho_{h}^{k+1}))(y)\cdot \nabla \varphi(t,y) \,dy =0.
\end{equation}

Moreover, if we extend $\varphi$ by $\varphi(0,\cdot)$ on $[-h,0)$, then

\begin{eqnarray*}
\int_0^T \int_{\Omega} \rho_{h}(t,x)  \partial_t \varphi(t,x)\,dxdt &=& \sum_{k=0}^{N} \int_{t_{k-1}}^{t_{k}} \int_{\Omega} \rho_{h}^k(x)  \partial_t \varphi(t,x)\,dxdt\\
&=& \sum_{k=0}^{N} \int_{\Omega} \rho_{h}^k(x) (\varphi(t_{k},x)-\varphi(t_{k-1},x))\,dx\\
&=& \sum_{k=0}^{N-1} \int_{\Omega} \varphi(t_{k},x) (\rho_{h}^{k}(x) -\rho_{h}^{k+1}(x))\,dx - \int_{\Rn} \rho_{0}(x) \varphi(0,x) \, dx.
\end{eqnarray*}

And using the second order Taylor-Lagrange formula, we find
$$\int_{\Omega\times \Omega} (\varphi(kh,x)- \varphi(kh,y))\,d \gamma_k(x,y)=\int_{\Omega\times \Omega} \nabla \varphi(kh,y)\cdot (x-y)\, d\gamma_k(x,y)+\int_{\Omega \times \Omega} \mathcal{R}[\varphi(t_k,\cdot)](x,y) d\gamma_k (x,y).$$
This concludes the proof if we sum on $k$ and use \eqref{prem var apre IPP}.

\end{proof}

\begin{rem}
\label{rem a.e equality}
We remark that equality \eqref{a.e equality} is still true in $\Rn$. Indeed, the first part of the proof does not depend of the domain and we can use this argument on $\Rn$. This equality will be used in section \ref{uniqueness} to obtain uniqueness result.
\end{rem}

In the next proposition, we propose an alternative argument to the flow interchange argument to get an estimate on the gradient of $\rho_h$. Differences with the flow interchange argument are that we do not need to assume the space convexity and boundary condition on $\nabla V[\rho]$. Moreover we do not obtain exactly the same estimate. Indeed, in proposition \ref{estimation gradient}, $\nabla \rho_h^{m/2}$ is bounded in $L^2((0,T)\times \Rn)$ whereas in the following proposition we establish a bound on $\nabla \rho_h^{m}$ in $L^1((0,T)\times \Rn)$ using \eqref{a.e equality}.

\begin{prop}
\label{estimation gradient borné W11}
There exists a constant $C$ which does not depend on $h$ such that 
$$ \|\rho_{h}^{m}\|_{L^1([0,T];W^{1,1}(\Omega))} \leqslant CT$$
for all $T>0$. 
\end{prop}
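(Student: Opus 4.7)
The plan is to derive the estimate directly from the a.e. Euler--Lagrange relation \eqref{a.e equality}, which already encodes an $L^1$ bound on $\nabla P(\rho_h^{k+1})$ in terms of the displacement and the bounded gradient of $V[\rho_h^k]$. First, I would take the pointwise norm in \eqref{a.e equality} and integrate over $\Omega$. Using \eqref{Hyp 2 V} to control $\|\nabla V[\rho_h^k]\|_{L^\infty}$ by a constant $C$ and the fact that $\rho_h^{k+1}$ is a probability measure, this yields
$$h\|\nabla P(\rho_h^{k+1})\|_{L^1(\Omega)} \leqslant \int_\Omega |y-T_k(y)|\,\rho_h^{k+1}(y)\,dy + Ch.$$
Then Cauchy--Schwarz against the probability measure $\rho_h^{k+1}\,dy$ gives $\int_\Omega |y-T_k(y)|\rho_h^{k+1}(y)\,dy \leqslant W_2(\rho_h^k,\rho_h^{k+1})$.

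Next I would convert the estimate on $\nabla P(\rho_h^{k+1})$ into one on $\nabla (\rho_h^{k+1})^m$. From \eqref{hyp F}, $P'(x)=xF''(x) \geqslant Cx^{m-1}$ on $[0,+\infty)$, so $P$ is a strictly increasing bijection of $[0,+\infty)$ and the function $g(y):=(P^{-1}(y))^m$ satisfies $g(0)=0$ and
$$g'(y) = \frac{m(P^{-1}(y))^{m-1}}{P'(P^{-1}(y))} \leqslant \frac{m}{C},$$
hence $g$ is Lipschitz on $[0,+\infty)$. Since the previous proposition established $P(\rho_h^{k+1}) \in W^{1,1}(\Omega)$, the standard chain rule for Sobolev functions applied to $(\rho_h^{k+1})^m = g(P(\rho_h^{k+1}))$ yields $(\rho_h^{k+1})^m \in W^{1,1}(\Omega)$ with
$$\|\nabla (\rho_h^{k+1})^m\|_{L^1(\Omega)} \leqslant \tfrac{m}{C}\|\nabla P(\rho_h^{k+1})\|_{L^1(\Omega)}.$$

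Combining the two previous steps, multiplying by $h$ and summing from $k=0$ to $N-1$ with $N=\lceil T/h\rceil$, Cauchy--Schwarz together with \eqref{estimation distance} gives
$$\sum_{k=0}^{N-1} W_2(\rho_h^k,\rho_h^{k+1}) \leqslant \sqrt{N}\Bigl(\sum_{k=0}^{N-1} W_2^2(\rho_h^k,\rho_h^{k+1})\Bigr)^{1/2} \leqslant \sqrt{N}\sqrt{Ch} \leqslant C\sqrt{T+h},$$
so that
$$\int_0^T \|\nabla \rho_h(t)^m\|_{L^1(\Omega)}\,dt = h\sum_{k=0}^{N-1} \|\nabla(\rho_h^{k+1})^m\|_{L^1(\Omega)} \leqslant C(\sqrt{T+h}+T) \leqslant C(1+T).$$
For the $L^1$--in--space part it suffices to use \eqref{estimation entropie}, which together with the coercivity $F(x) \geqslant c x^m - c'$ implied by \eqref{hyp F} yields $\int_\Omega (\rho_h^k)^m \leqslant C$ uniformly in $k$ and $h$, hence $\int_0^T \|\rho_h(t)^m\|_{L^1(\Omega)}\,dt \leqslant CT$.

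The main obstacle I expect is the justification of the chain rule step that turns the $W^{1,1}$ control of $P(\rho_h^{k+1})$ into $W^{1,1}$ control of $(\rho_h^{k+1})^m$ with the quantitative gradient bound; everything depends on the assumption $F''(x) \geqslant Cx^{m-2}$ in \eqref{hyp F} through the computation of $g'$. A secondary subtlety is the mismatch between $W_1$-type integrals and the $W_2$-optimal map $T_k$, resolved cleanly by Cauchy--Schwarz against the probability measure $\rho_h^{k+1}\,dy$.
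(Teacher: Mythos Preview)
Your proof is correct and follows essentially the same route as the paper: take norms in \eqref{a.e equality}, bound $h\|\nabla P(\rho_h^{k+1})\|_{L^1(\Omega)}$ by $W_2(\rho_h^k,\rho_h^{k+1})+Ch$, sum in $k$, invoke \eqref{estimation distance}, and then convert control of $\nabla P(\rho_h)$ into control of $\nabla\rho_h^m$ via $P'(x)\geqslant Cx^{m-1}$. Your passage from $P(\rho_h^{k+1})\in W^{1,1}$ to $(\rho_h^{k+1})^m\in W^{1,1}$ through the Lipschitz composition $g=(P^{-1})^m$ is in fact a cleaner justification than the formal chain rule the paper writes, and your final bound $C(1+T)$ is the honest outcome of the Cauchy--Schwarz step on $\sum W_2$.
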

\begin{proof}
According to \eqref{a.e equality}, we have
\begin{eqnarray*}
h\int_{\Omega} |\nabla(P(\rho_h^{k+1}))| \,dx & \leqslant & W_2(\rho_h^k,\rho_h^{k+1}) +hC.
\end{eqnarray*}
Then if we sum on $k$ from $0$ to $N-1$, we get
\begin{eqnarray*}
\int_0^T \int_\Omega |\nabla(P(\rho_h))|\,dxdt &\leqslant & \sum_{k=0}^{N-1} W_2(\rho_h^k,\rho_h^{k+1}) +TC\\
&\leqslant & N\sum_{k=0}^{N-1} W_2^2(\rho_h^k,\rho_h^{k+1}) +TC\\
&\leqslant & CT,
\end{eqnarray*} 
because of \eqref{estimation distance}. 

If $F(x)=x\log(x)$ then $P'(x)=1$ and if $F$ satisfies \eqref{hyp F}, then $F''(x) \geqslant Cx^{m-2}$ and $P'(x)=xF''(x) \geqslant Cx^{m-1}$. In both cases, we have $P'(x)\geqslant Cx^{m-1}$ (with $m=1$ for $x\log(x)$). So 
$$ \int_0^T \int_\Omega |\nabla(P(\rho_h))|\,dxdt = \int_0^T \int_\Omega P'(\rho_h)|\nabla\rho_h|\,dxdt \geqslant C\int_0^T \int_\Omega \rho_h^{m-1}|\nabla\rho_h|\,dxdt =C\int_0^T \int_\Omega |\nabla\rho_h^m|\,dxdt,$$ 

Which proves the proposition.
\end{proof}

Now we introduce $\G \, : \, L^m(\Omega) \rightarrow [0,+\infty]$ defined by
$$ \G(\rho):= \left\{\begin{array}{ll}
\| \rho^m \|_{BV(\Omega)}  & \text{ if } \rho \in \mathcal{P}^{ac}(\Omega) \text{ and } \rho^m \in BV(\Omega),\\
+\infty & \text{ otherwise.}
\end{array}\right.$$

\begin{prop}
$\G$ is lower semi-continuous on $L^m(\Omega)$ and its sublevels are relatively compact in $L^m(\Omega)$.
\end{prop}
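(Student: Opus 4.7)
The plan is to treat the two assertions in turn, both reducing to standard facts about $BV(\Omega)$ applied to the power $\rho^m$.

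For the lower semicontinuity, let $(\rho_n)\subset L^m(\Omega)$ converge to $\rho$ in $L^m(\Omega)$, and assume $\liminf_n \G(\rho_n)<+\infty$ (otherwise there is nothing to prove). First I would pass to a subsequence attaining this liminf and extract a further subsequence converging a.e.\ to $\rho$. Since $\Omega$ is bounded, $L^m(\Omega)\hookrightarrow L^1(\Omega)$, so $\rho_n\to\rho$ in $L^1(\Omega)$ and $\rho\in\mathcal{P}^{ac}(\Omega)$. Next, since $\|\rho_n^m\|_{L^1(\Omega)}=\|\rho_n\|_{L^m(\Omega)}^m\to\|\rho\|_{L^m(\Omega)}^m=\|\rho^m\|_{L^1(\Omega)}$ and $\rho_n^m\to\rho^m$ a.e., Scheffé's lemma yields $\rho_n^m\to\rho^m$ in $L^1(\Omega)$. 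Then the classical lower semicontinuity of the total variation with respect to $L^1$ convergence gives
\[
\|\rho^m\|_{BV(\Omega)}\leqslant \liminf_{n}\|\rho_n^m\|_{BV(\Omega)}=\liminf_n \G(\rho_n),
\]
which is the desired inequality $\G(\rho)\leqslant \liminf_n \G(\rho_n)$.

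For the relative compactness of the sublevels, let $(\rho_n)$ satisfy $\G(\rho_n)\leqslant c$, so $(\rho_n^m)_n$ is bounded in $BV(\Omega)$. Since $\Omega$ is a smooth bounded domain, the Rellich-type embedding $BV(\Omega)\hookrightarrow L^1(\Omega)$ is compact, hence there exists a subsequence (not relabelled) and $\eta\in L^1(\Omega)$ with $\eta\geqslant 0$ a.e.\ such that $\rho_n^m\to \eta$ in $L^1(\Omega)$. To upgrade this to $L^m$-convergence of $\rho_n$ itself, I would use the elementary inequality
\[
(a^{1/m}-b^{1/m})^m\leqslant a-b \qquad \text{for all } a\geqslant b\geqslant 0,\ m\geqslant 1,
\]
which follows from the superadditivity of $t\mapsto t^m$ on $\R^+$ (i.e.\ $(x+y)^m\geqslant x^m+y^m$ for $x,y\geqslant 0$). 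Setting $\rho:=\eta^{1/m}$, this gives
\[
\int_\Omega |\rho_n-\rho|^m\,dx \leqslant \int_\Omega |\rho_n^m-\eta|\,dx \xrightarrow[n\to\infty]{}0,
\]
so $\rho_n\to\rho$ in $L^m(\Omega)$, proving the claimed relative compactness.

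The only mildly delicate point is the upgrade from $L^1$-convergence of $\rho_n^m$ to $L^m$-convergence of $\rho_n$; this is handled cleanly by the pointwise inequality above, and all other ingredients (Scheffé, $L^1$-lower semicontinuity of the total variation, and compactness of $BV\hookrightarrow L^1$ on a smooth bounded domain) are standard, so no serious obstacle is expected.
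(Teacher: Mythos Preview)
Your proof is correct and follows the same overall strategy as the paper: for lower semicontinuity, establish $\rho_n^m\to\rho^m$ in $L^1(\Omega)$ and then invoke the $L^1$-lower semicontinuity of the $BV$-norm; for relative compactness of sublevels, use the compact embedding $BV(\Omega)\hookrightarrow L^1(\Omega)$ on the powers $\rho_n^m$ and then pull back via the continuity of $\eta\mapsto\eta^{1/m}$ from $L^1$ to $L^m$.

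The only noteworthy difference is in how you obtain $\rho_n^m\to\rho^m$ in $L^1$ for the lower semicontinuity step. The paper appeals to $BV$-boundedness of $(\rho_n^m)$ together with the compact embedding $BV\hookrightarrow L^1$ to extract an $L^1$-convergent subsequence and identify the limit. You instead use Scheff\'e's lemma directly from the $L^m$-convergence of $\rho_n$ (convergence of $L^1$-norms of $\rho_n^m$ plus a.e.\ convergence), which is slightly more elementary since it does not rely on the $BV$-structure at that stage. In the compactness part, your explicit inequality $|a^{1/m}-b^{1/m}|^m\leqslant |a-b|$ makes precise the continuity of the $m$-th root map that the paper only asserts. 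Both routes are equally valid and of comparable length.
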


\begin{proof}
First we show that $\G$ is lower semi-continuous on $L^m(\Omega)$. Let $\rho_n$ be a sequence which converges strongly to $\rho$ in $L^m(\Omega)$ with $\sup_{n} \G(\rho_n) \leqslant C<+\infty$. Without loss of generality, we assume that $\rho_n$ converges to $\rho$ a.e.\\
Since $C<+\infty$, the functions $\rho_n^m$ are uniformly bounded in $BV(\Omega)$. So we know that $\rho_n^m$ converges weakly in $BV(\Omega)$ to $\mu$. But since $\Omega$ is smooth and bounded, the injection of $BV(\Omega)$ into $L^1(\Omega)$ is compact. We can deduce that $\mu=\rho^m$ and $\rho_n^m$ converges to $\rho^m$ strongly in $L^1(\Omega)$. Then by lower semi-continuity of the $BV$-norm in $L^1$, we obtain
$$ \G(\rho) \leqslant \liminf_{n\nearrow +\infty} \G(\rho_n).$$

Now, we have to prove that the sublevels, $A_c:=\{ \rho \in L^m(\Omega) \, : \, \G(\rho) \leqslant c\}$, are relatively compact in $L^m(\Omega)$. Since $i \, : \, \eta \in L^1(\Omega) \mapsto \eta^{1/m} \in L^m(\Omega)$ is continuous, we just have to prove that $B_c:=\{\eta=\rho^m \, : \, \rho \in A_c \}$ is relatively compact in $L^1(\Omega)$. So to conclude the proof, it is enough to notice that $B_c$ is a bounded subset of $BV(\Omega)$ and that the injection of $BV(\Omega)$ into $L^1(\Omega)$ is compact.
\end{proof}

Now we can apply Rossi-Savaré theorem (theorem \ref{Rossi-Savaré}) to have the strong convergence in $L^{m}( ]0,T[ \times \Omega)$ of $\rho_{h}$ to $\rho$ and then we find the strong convergence in $L^1(]0,T[ \times \Omega)$ of $P(\rho_{h}) $ to $P(\rho)$, for all $T>0$, using the fact that $P$ is controlled by $x^m$ \eqref{hyp F} and Krasnoselskii theorem (see \cite{DF}, chapter 2).

Moreover, since 
$$\int_0^T \int_\Omega |\nabla(P(\rho_h))|\,dxdt \leqslant  CT,$$
we have 
\begin{equation}
\label{cv measure}
\nabla (P(\rho_h))dxdt \rightharpoonup \mu \text{  in } \mathcal{M}^n([0,T]\times \Omega),
\end{equation} 
i.e 
$$ \int_0^T\int_\Omega \xi \cdot  \nabla (P(\rho_h))dxdt \rightarrow \int_0^T\int_\Omega \xi \cdot \, d\mu,$$
for all $\xi \in \mathcal{C}_b([0,T]\times \Omega)$ (this means that we do not require $\xi$ to vanish on $\partial \Omega$).
But since $P(\rho_h)$ converges strongly to $P(\rho)$ in $L^1([0,T]\times \Omega)$, $\mu=\nabla (P(\rho))$.

To conclude, we pass to the limit in \eqref{equation discrete borné} and theorem \ref{existence solution sur borné} follows.
 
\section{Uniqueness of solutions.}\label{uniqueness}

In this section, we prove uniqueness result if $\Omega$ is a convex set. The convexity assumption is important because uniqueness arises from a displacement convexity argument.

Without loss of generality, we focus here on internal energy defined on the subset of probability densities with finite second moment $\mathcal{P}_{2}^{ac}(\Omega)$ and given by
$$ \forall \rho \in \mathcal{P}_{2}^{ac}(\Omega), \qquad \F(\rho):= \int_\Omega F(\rho(x)) \, dx, $$
with $F \, : \, \R^+ \rightarrow \R$ a convex function of class $\mathcal{C}^2((0,+\infty))$ with $F(0)=0$. We recall that for all $\rho,\mu$ in $\mathcal{P}_{2}^{ac}$, there exists (see for example \cite{B},\cite{S},\cite{V1}) a unique optimal transport map $T$ between $\rho$ and $\mu$ such that
$$ W_2^2(\rho,\mu)=\int_{\Omega \times \Omega} |x-T(x)|^2\rho(x) \, dx.$$ 
The McCann's interpolation is defined by $T_t:=Id+t(T-Id)$ for any $t\in [0,1]$. Then the curve $t\in [0,1]  \mapsto \rho_t$, with $\rho_t:={T_t}_{\#}\rho$, is the Wasserstein geodesic between $\rho$ and $\mu$ (\cite{S},\cite{V1},\cite{AGS}).
 
An internal energy $\F$ is said displacement convex if 
$$ t\in[0,1] \mapsto \F(\rho_t) \text{ is convex.}$$

Moreover, we say that $F \, :\, [0,+\infty) \rightarrow \R$ satisfy McCann's condition if
\begin{equation}
\label{McCann condition}
x \in (0,+\infty) \mapsto x^{n}F(x^{-n}) \text{ is convex nonincreasing}.
\end{equation}
McCann showed in \cite{MC} that if $F$ satisfy \eqref{McCann condition}, then $\F$ is displacement convex.

\smallskip
Now we will state a general uniqueness argument based on geodesic convexity. This result has been already proved in \cite{CL} in the flat-torus case and the proof is the same in our case.

\begin{thm}
Assume that $V_i$ satisfy \eqref{Hyp 2 V} and \eqref{Hyp 3 V} and $F_{i} \in \h_{m_i}$ satisfy \eqref{McCann condition}. Let $\boldsymbol{\rho}^1:=(\rho_1^1, \dots, \rho_l^1)$ and $\boldsymbol{\rho}^2:=(\rho_1^2, \dots, \rho_l^2)$ two weak solutions of \eqref{systeme} or \eqref{existence solution sur borné} with initial conditions $\rho_i^1(0, \cdot)= \rho_{i,0}^1$ and $\rho_i^2(0, \cdot)= \rho_{i,0}^2$. If for all $T<+\infty$,
\begin{equation}
\label{hyp gradient field}
\int_0^T \sum_{i=1}^l \|v^1_{i,t}\|_{L^2(\rho_{i,t}^1)} \,dt + \int_0^T \sum_{i=1}^l \|v^2_{i,t}\|_{L^2(\rho_{i,t}^2)} \,dt <+\infty,
\end{equation}
with, for $j\in \{1,2\}$,
 $$ v^j_{i,t}:= - \frac{\nabla P_i(\rho_{i,t}^j)}{\rho_{i,t}^j} -\nabla V_i[\grho_{t}^j],$$

then for every $t\in   [0 ,T ]$,
$$  W_2^2(\boldsymbol{\rho}_t^1,\boldsymbol{\rho}_t^2) \leqslant e^{4Ct}W_2^2(\boldsymbol{\rho}^1_0,\boldsymbol{\rho}^2_0).$$
 In particular, we have uniqueness for the Cauchy problems \eqref{systeme} and \eqref{existence solution sur borné}.
\end{thm}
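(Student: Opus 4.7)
The plan is a standard Wasserstein contraction argument à la Otto-Villani / Carrillo-McCann-Villani, adapted to the product setting and split between a displacement-convex (diffusion) part that yields a non-positive contribution and a Lipschitz (drift) part that produces the exponential factor.

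First, thanks to assumption \eqref{hyp gradient field} and the continuity equations $\partial_t \rho_{i,t}^j + \dive(\rho_{i,t}^j v_{i,t}^j)=0$, the curves $t\mapsto \rho_{i,t}^j$ are absolutely continuous in $(\mathcal{P}_2^{ac}(\Omega),W_2)$ with $L^2$-velocity fields $v_{i,t}^j$. Applying the differentiation formula for the squared Wasserstein distance along absolutely continuous curves (see Theorem~8.4.7 in \cite{AGS}), for a.e.\ $t$
\begin{equation*}
\frac{d}{dt}\,\tfrac12 W_2^2(\rho_{i,t}^1,\rho_{i,t}^2)
= -\int_\Omega \langle v_{i,t}^1(x)-v_{i,t}^2(T_{i,t}(x)),\, x-T_{i,t}(x)\rangle \rho_{i,t}^1(x)\,dx,
\end{equation*}
where $T_{i,t}$ is the optimal transport map from $\rho_{i,t}^1$ to $\rho_{i,t}^2$. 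Substituting $v_{i,t}^j = -\nabla F_i'(\rho_{i,t}^j) - \nabla V_i[\boldsymbol{\rho}_t^j]$ (using $\nabla P_i(\rho)/\rho=\nabla F_i'(\rho)$) splits this derivative into a diffusion part $I_i^{\mathrm{diff}}(t)$ and a drift part $I_i^{\mathrm{drift}}(t)$.

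For the diffusion part, the McCann condition \eqref{McCann condition} gives displacement convexity of $\mathcal{F}_i$, i.e.\ $s\mapsto \mathcal{F}_i(\rho_s)$ is convex along the geodesic $\rho_s=((1-s)\mathrm{Id}+sT_{i,t})_\#\rho_{i,t}^1$. Evaluating the convexity inequality at $s=0$ and $s=1$ and summing the two yields
\begin{equation*}
\int_\Omega \langle \nabla F_i'(\rho_{i,t}^1),T_{i,t}-\mathrm{Id}\rangle\rho_{i,t}^1\,dx
+\int_\Omega \langle \nabla F_i'(\rho_{i,t}^2),T_{i,t}^{-1}-\mathrm{Id}\rangle\rho_{i,t}^2\,dy \leqslant 0,
\end{equation*}
which is exactly $I_i^{\mathrm{diff}}(t)\leqslant 0$. (To make this rigorous at low regularity one approximates by the subdifferential / EVI characterization of geodesically convex functionals as in \cite{AGS}, Chapter~10.)

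For the drift part, changing variables $y=T_{i,t}(x)$ in the $\rho_{i,t}^2$-integral,
\begin{equation*}
I_i^{\mathrm{drift}}(t) = -\int_\Omega \langle \nabla V_i[\boldsymbol{\rho}_t^1](x)-\nabla V_i[\boldsymbol{\rho}_t^2](T_{i,t}(x)),\, x-T_{i,t}(x)\rangle \rho_{i,t}^1\,dx.
\end{equation*}
Triangle inequality combined with \eqref{Hyp 2 V} (bounding the gradient oscillation by $C|x-T_{i,t}(x)|$) and \eqref{Hyp 3 V} (bounding the measure-variation in $L^\infty$ by $CW_2(\boldsymbol{\rho}_t^1,\boldsymbol{\rho}_t^2)$), together with Cauchy-Schwarz, yields
\begin{equation*}
I_i^{\mathrm{drift}}(t) \leqslant C W_2(\rho_{i,t}^1,\rho_{i,t}^2)\bigl(W_2(\rho_{i,t}^1,\rho_{i,t}^2)+W_2(\boldsymbol{\rho}_t^1,\boldsymbol{\rho}_t^2)\bigr).
\end{equation*}

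Summing over $i$ with $W_2^2(\boldsymbol{\rho},\boldsymbol{\mu}):=\sum_i W_2^2(\rho_i,\mu_i)$ and applying Cauchy-Schwarz $\sum_i W_2(\rho_{i,t}^1,\rho_{i,t}^2)\leqslant\sqrt{l}\,W_2(\boldsymbol{\rho}_t^1,\boldsymbol{\rho}_t^2)$ collapses everything into
\begin{equation*}
\frac{d}{dt}W_2^2(\boldsymbol{\rho}_t^1,\boldsymbol{\rho}_t^2) \leqslant 4C\,W_2^2(\boldsymbol{\rho}_t^1,\boldsymbol{\rho}_t^2),
\end{equation*}
and Gronwall gives the announced estimate, hence uniqueness when $\boldsymbol{\rho}_0^1=\boldsymbol{\rho}_0^2$. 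The main obstacle is the rigorous justification of the two ``diffusion'' integration-by-parts that underlie $I_i^{\mathrm{diff}}(t)\leqslant 0$: this must be done through the EVI / sub-differential characterization of displacement-convex functionals (since we work only with weak solutions of finite Fisher-type energy \eqref{hyp gradient field}), rather than by direct computation. In the bounded-domain case, convexity of $\Omega$ ensures that McCann's interpolation stays in $\Omega$, so the displacement convexity argument goes through unchanged.
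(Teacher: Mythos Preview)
Your argument is correct and is precisely the displacement-convexity/Gronwall approach the paper has in mind: the paper gives no proof of this theorem, simply noting that ``this result has been already proved in \cite{CL} in the flat-torus case and the proof is the same in our case,'' and your sketch reproduces that standard argument (differentiate $W_2^2$ along the two absolutely continuous curves via \cite{AGS}, use McCann's condition to make the diffusion contribution non-positive, and control the drift contribution via \eqref{Hyp 2 V}--\eqref{Hyp 3 V}). One cosmetic point: the sign in your displayed derivative formula for $\tfrac12 W_2^2$ should be $+$ rather than $-$; your subsequent expressions for $I_i^{\mathrm{diff}}$ and $I_i^{\mathrm{drift}}$ are already written consistently with the correct sign.
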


In the following proposition, we will prove that assumption \eqref{hyp gradient field} holds if $\Omega$ is a smooth bounded convex subset of $\Rn$ or if $\Omega=\Rn$. 

\begin{prop}
\label{gradient field assumption}
Let $\boldsymbol{\rho}:=(\rho_1, \dots, \rho_l)$ be a weak solution of \eqref{systeme} obtained with the previous semi-implicit JKO scheme. Then $\rho_i$ satisfies \eqref{hyp gradient field} for all $i\in [\![ 1, l]\!]$.
\end{prop}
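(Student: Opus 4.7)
The plan is to exploit the Euler-Lagrange identity \eqref{a.e equality}, which by Remark \ref{rem a.e equality} is valid in both the whole-space and bounded-domain settings, in order to extract a discrete velocity field whose weighted $L^2$ norm is controlled by the classical distance estimate \eqref{estimation distance}. For each $i$ and $k$, denote by $T_k^i$ the optimal transport map sending $\rho_{i,h}^{k+1}$ to $\rho_{i,h}^{k}$. Dividing \eqref{a.e equality} by $\rho_{i,h}^{k+1}$ on its positivity set and setting
$$ v_{i,h}^{k+1}(y) := -\nabla V_i[\grho_h^{k}](y) - \frac{\nabla P_i(\rho_{i,h}^{k+1})(y)}{\rho_{i,h}^{k+1}(y)}, $$
one obtains the pointwise identification $v_{i,h}^{k+1}(y) = (y - T_k^i(y))/h$ for $\rho_{i,h}^{k+1}$-a.e.\ $y$, which is the natural discrete analogue of the velocity in the statement.

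Integrating $|v_{i,h}^{k+1}|^2$ against $\rho_{i,h}^{k+1}$ and using the definition of $W_2$ gives the exact identity
$$ \int_\Omega \rho_{i,h}^{k+1}|v_{i,h}^{k+1}|^2\,dy \;=\; \frac{W_2^2(\rho_{i,h}^k,\rho_{i,h}^{k+1})}{h^2}. $$
Defining $v_{i,h}(t):=v_{i,h}^{k+1}$ on $(kh,(k+1)h]$ and summing over $k$ with $Nh=T$, estimate \eqref{estimation distance} yields the uniform-in-$h$ control
$$ \int_0^T \|v_{i,h}(t)\|_{L^2(\rho_{i,h}(t))}^2\,dt \;=\; \sum_{k=0}^{N-1}\frac{W_2^2(\rho_{i,h}^{k},\rho_{i,h}^{k+1})}{h} \;\leqslant\; C, $$
whence by Cauchy-Schwarz in time $\int_0^T \|v_{i,h}\|_{L^2(\rho_{i,h})}\,dt \leqslant \sqrt{CT}$.

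To transfer this bound to the limit, I would view the momenta $m_{i,h} := \rho_{i,h}v_{i,h}$ as vector-valued measures on $(0,T)\times\Omega$ and invoke the joint convex lower semicontinuity of the Benamou-Brenier-type functional $(\mu,m)\mapsto \iint |m|^2/\mu$ under weak-$*$ convergence (cf.\ \cite{AGS}). Combined with the convergences of $\rho_{i,h}$ established in Propositions \ref{cv faible} and \ref{cv forte}, any weak-$*$ cluster point $m_i$ satisfies $\iint |m_i|^2/\rho_i \leqslant C$ and admits the representation $m_i=\rho_i\tilde v_i$ with $\tilde v_i\in L^2(\rho_i\,dx\,dt)$. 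Since $\rho_i$ is a weak solution of \eqref{systeme}, the pair $(\rho_i,\tilde v_i)$ solves the continuity equation. The stated field $v_{i,t} = -\nabla V_i[\grho(t,\cdot)] - \nabla P_i(\rho_i(t,\cdot))/\rho_i(t,\cdot) = -\nabla\bigl(V_i[\grho(t,\cdot)]+F_i'(\rho_i(t,\cdot))\bigr)$ is a gradient, hence lies in the tangent space of $\mathcal{P}_2^{ac}$ at $\rho_i(t)$, and drives the same continuity equation; uniqueness of the tangent velocity field (\cite{AGS}) then forces $\tilde v_i=v_{i,t}$, and a final Cauchy-Schwarz gives \eqref{hyp gradient field}.

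The main obstacle is precisely the passage to the limit in the momentum fields themselves rather than only in the scalar kinetic energies: one must handle vector-valued measures, justify the joint weak-$*$ compactness, and then rigorously identify the Wasserstein tangent velocity. The crucial structural input making this identification possible is that $v_{i,t}$ is a gradient of a potential built from $V_i$ and $F_i'$, which is what places it in the tangent cone and rules out divergence-free perturbations giving the same continuity equation.
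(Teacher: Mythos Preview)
Your discrete estimate via the Euler--Lagrange identity \eqref{a.e equality} and the plan to pass to the limit through lower semicontinuity of the Benamou--Brenier action are exactly the two ingredients the paper uses. The gap is in your identification step. You write that $v_{i,t}=-\nabla\bigl(V_i[\grho]+F_i'(\rho_i)\bigr)$ ``is a gradient, hence lies in the tangent space'', and then invoke uniqueness of the tangent velocity to force $\tilde v_i=v_{i,t}$. But the tangent space at $\rho_i(t)$ is by definition a closed subspace of $L^2(\rho_i(t))$, so membership already presupposes $v_{i,t}\in L^2(\rho_i(t))$ --- which is precisely the content of \eqref{hyp gradient field}. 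Without that square-integrability known a priori, $v_{i,t}$ is only a distributional object and cannot be compared to $\tilde v_i$ inside $L^2(\rho_i)$; the fact that both formally drive the same continuity equation does not force them to coincide, nor does it let you transfer the bound from $\tilde v_i$ to $v_{i,t}$. The argument is circular at exactly the point you flag as ``the main obstacle''.

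The paper avoids this by identifying the limit momentum \emph{directly} rather than through an abstract cluster point followed by a tangent-space argument. One checks term by term that
$$E_{i,h}:=\nabla P_i(\rho_{i,h})+\rho_{i,h}\nabla V_i[\grho_h]\ \longrightarrow\ \nabla P_i(\rho_i)+\rho_i\nabla V_i[\grho]=:E_i$$
in $\mathcal{M}^n([0,T]\times\Omega)$: the drift piece converges by Propositions \ref{cv faible}--\ref{cv forte} together with \eqref{Hyp 3 V}, while for the diffusion piece the discrete bound itself yields $\int_0^T\!\int_\Omega|\nabla P_i(\rho_{i,h})|\,dxdt\leqslant C$ (since $|\nabla P_i(\rho_{i,h})|=|\nabla F_i'(\rho_{i,h})|\rho_{i,h}\leqslant(1+|\nabla F_i'(\rho_{i,h})|^2)\rho_{i,h}$), and the strong $L^1$ convergence of $P_i(\rho_{i,h})$ pins the limit down as $\nabla P_i(\rho_i)$. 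With both $\rho_{i,h}\to\rho_i$ and $E_{i,h}\to E_i$ already identified, lower semicontinuity of $\Psi(\rho,E)=\iint|E|^2/\rho$ (from \cite{BBu}) gives $\Psi(\rho_i,E_i)\leqslant C$ immediately; this says exactly that $E_i\ll\rho_i$ and $v_{i,t}=-E_i/\rho_i\in L^2(\rho_i\,dxdt)$. No tangent-space identification is needed because the limit has been named explicitly before the lower-semicontinuity step, not after.
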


\begin{proof}We do not separate the cases where $\Omega$ is a bounded set or is $\Rn$. 
We split the proof in two parts. First, we show that \eqref{hyp gradient field} is satisfied by $\rho_{i,h}$ defined in \eqref{interpolation}. Then by a l.s.c argument we will conclude the proof.

\begin{itemize}
\item In the first step, we show that $\rho_{i,h}$ satisfies
\begin{equation}
\label{disceet gradient field assumption}
\int_0^T\int_\Omega | \nabla F_i'(\rho_{i,h}) + \nabla V_i[\grho_{h}]|^2 \rho_{i,h} \,dxdt \leqslant C,
\end{equation}
where $C$ does not depend of $h$.

By equality \eqref{a.e equality} and remark \ref{rem a.e equality}, we have
$$ \nabla F_i'(\rho_{i,h}^{k+1}) + \nabla V_i[\grho_{h}^k] = \frac{T_k(y)-y}{h} \qquad \rho_{i,h}^{k+1}-\text{a.e on } \Omega,$$
where $T_k$ is the optimal transport map between $\rho_{i,h}^{k+1}$ and $\rho_{i,h}^{k}$. Then if we take the square, multiply by $\rho_{i,h}^{k+1}$ and integrate on $\Omega$, we find
$$\int_\Omega | \nabla F_i'(\rho_{i,h}^{k+1})+ \nabla V_i[\grho_{h}^k]|^2 \rho_{i,h}^{k+1} \,dx \leqslant\frac{1}{h^2}W_2^2(\rho_{i,h}^{k+1},\rho_{i,h}^{k}).$$
Now using \eqref{Hyp 3 V}, we get
\begin{eqnarray*}
| \nabla F_i'(\rho_{i,h}^{k+1})+ \nabla V_i[\grho_{h}^{k+1}]| &\leqslant & | \nabla F_i'(\rho_{i,h}^{k+1})+ \nabla V_i[\grho_{h}^{k}]|+|\nabla V_i[\grho_{h}^{k}]- \nabla V_i[\grho_{h}^{k+1}]|\\
&\leqslant &| \nabla F_i'(\rho_{i,h}^{k+1})+ \nabla V_i[\grho_{h}^{k}]|+CW_2(\grho_{h}^{k+1},\grho_{h}^{k})
\end{eqnarray*}
So we have
$$ \int_\Omega | \nabla F_i'(\rho_{i,h}^{k+1})+ \nabla V_i[\grho_{h}^{k+1}]|^2 \rho_{i,h}^{k+1} \,dx \leqslant C\left(\frac{1}{h^2}W_2^2(\rho_{i,h}^{k+1},\rho_{i,h}^{k}) +W_2^2(\grho_{h}^{k+1},\grho_{h}^{k}) \right).$$
Then using \eqref{estimation distance}, we finally get
\begin{eqnarray*}
\int_0^T\int_\Omega  | \nabla F_i'(\rho_{i,h})+ \nabla V_i[\grho_{h}]|^2 \rho_{i,h} \,dxdt &=& h\sum_{k=0}^{N-1}\int_\Omega  | \nabla F_i'(\rho_{i,h}^{k+1})+ \nabla V_i[\grho_{h}^{k+1}]|^2 \rho_{i,h}^{k+1} \,dx\\
&\leqslant & C\left( \frac{1}{h}\sum_{k=0}^{N-1} W_2^2(\rho_{i,h}^{k+1},\rho_{i,h}^{k}) +1 \right)\\
&\leqslant & C.
\end{eqnarray*}

%
%
%
%
%

\item To conclude, we have to pass to the limit in \eqref{disceet gradient field assumption}. First, we claim that $\nabla P_i(\rho_{i,h})$ converges to $\nabla P_i(\rho_{i})$ in $\mathcal{M}^n([0,T]\times \Omega)$. In a bounded set, this has been proved in \eqref{cv measure}. In $\Rn$ thanks to the previous step, we have
\begin{eqnarray*}
\int_0^T \int_{\Rn} |\nabla P_i(\rho_{i,h})|dt &:=& \int_0^T \int_{\Rn} |\nabla F'_i(\rho_{i,h})|\rho_{i,h}\,dx dt\\
&\leqslant & \int_0^T\int_{\Rn} (|\nabla F_i'(\rho_{i,h}) |^2 +1)\rho_{i,h}\\
&\leqslant & C,
\end{eqnarray*}
which gives the result because $P_i(\rho_{i,h})$ strongly converges in $L^1([0,T]\times \Rn)$ to $P_i(\rho_{i})$.\\

Let $\psi \, : \, \R^{n+1} \rightarrow \R \cup \{+\infty\}$ defined by
$$ \psi(r,m):= \left\{ \begin{array}{ll}
\frac{|m|^2}{r} & \text{ if } (r,m)\in ]0,+\infty[ \times \Rn,\\
0 & \text{ if } (r,m)=(0,0),\\
+\infty & \text{ otrherwise,}
\end{array}\right.$$
as in \cite{BB}. And define $\Psi \, : \, \mathcal{M}((0,T) \times \Omega)\times \mathcal{M}^n((0,T)\times \Omega)  \rightarrow \R \cup \{+\infty\}$, as in \cite{BJO}, by
$$ \Psi(\rho,E):=\left\{\begin{array}{ll}
\int_0^T \int_{\Omega} \psi(d\rho/d\mathcal{L},dE/d\mathcal{L}) \,dxdt & \text{ if } \rho \geqslant 0,\\
+\infty & \text{ otrherwise,}
\end{array} \right.$$
where $d\sigma/d\mathcal{L}$ is Radon-Nikodym derivative of $\sigma$ with respect to $\mathcal{L}_{|[0,T]\times \Omega}$. We can remark that since $\psi(0,m)=+\infty$ for any $m\neq 0$, we have
$$ \Psi(\rho,E) <+\infty \Rightarrow E \ll \rho.$$

With this definition, we can rewrite \eqref{disceet gradient field assumption} as
$$ \Psi(\rho_{i,h},\nabla P_i(\rho_{i,h})+\nabla V_i[\grho_h]\rho_{i,h})=\int_0^T\int_\Omega | \nabla F_i'(\rho_{i,h}) + \nabla V_i[\grho_{h}]|^2 \rho_{i,h} \,dxdt \leqslant C,$$
which, in particular, implies that $\nabla P_i(\rho_{i,h}) \ll\rho_{i,h} \ll \mathcal{L}_{|[0,T]\times \Omega}$.

Moreover, according to \cite{BBu}, $\Psi$ is lower semicontinuous on $\mathcal{M}([0,T]\times \Omega) \times \mathcal{M}^n([0,T]\times \Omega)$. So, it holds
$$ \Psi(\rho_i,\nabla P_i(\rho_{i})+\nabla V_i[\grho]\rho_{i}) \leqslant \liminf_{h\searrow 0}\Psi(\rho_{i,h},\nabla P_i(\rho_{i,h})+\nabla V_i[\grho_h]\rho_{i,h}) \leqslant C,$$
which imply $\nabla P_i(\rho_{i}) \ll \rho_i \ll \mathcal{L}_{|[0,T]\times \Omega}$ and conclude the proof because
\begin{eqnarray*}
\int_0^T \int_{\Omega} \left|\frac{\nabla P_i(\rho_i)}{\rho_i}+\nabla V_i[\grho]\right|^2 \rho_i \,dxdt &=& \int_0^T \int_{\Omega} \frac{|\nabla P_i(\rho_{i})+\nabla V_i[\grho]\rho_{i}|^2}{\rho_i} \,dxdt\\
&=& \Psi(\rho_i,\nabla P_i(\rho_{i})+\nabla V_i[\grho]\rho_{i}) \\
&\leqslant & C.
\end{eqnarray*}

\end{itemize}
\end{proof}

{\textbf{Acknowledgements:}} The author wants to gratefully thank G. Carlier for his help and advices.


\begin{thebibliography}{999}
\bibitem {A} M. Agueh, {\it Existence of solutions to degenerate parabolic equations via Monge-Kantorovich theory}, Adv. Differential Equations, 10 (2005), pp. 309-360. 
\bibitem {AGS} L. Ambrosio, N. Gigli and G. Savaré, {\it Gradient flows in metric spaces and in the space of probability measures}, Lectures in Math., ETH Zürich, 2005.
\bibitem {BB} J.-D. Benamou, Y. Brenier, {\it A computational fluid mechanics solution to the Monge-Kantorovich mass transfert problem}. Numer. Math., 84 (3)(2000), 375-393.
\bibitem {BBu} G. Bouchitté and G. Buttazzo, {\it New lower semicontinuity results for non-convex functionals defined on measures}. Nonlinear Anal., 15 (7)(1990), 679-692.
\bibitem {BCC} A. Blanchet, V. Calvez, and J. A Carrillo, {\it Convergence of the mass-transport steepest descent scheme for the subcritical patlak-keller-segel model}. SIAM Journal on Numerical Analysis, 46(2):691–721, 2008.
\bibitem {B} Y. Brenier, {\it Polar factorization and monotone rearrangement of vector-valued functions}, Comm. Pure Appl. Math., 4 (1991), pp. 375–417.
\bibitem {BJO} G. Buttazzo, C. Jimenez and E. Oudet, {\it An optimization problem for mass transportation with congested dynamics}, SIAM J. Control Optim., 41(6):1041-1060, 2007.
\bibitem {CL} G. Carlier and M. Laborde, {\it On systems of continuity equations with nonlinear diffusion and nonlocal drifts}, preprint, 2015.
\bibitem {DS} S. Daneri and G. Savaré. {\it Eulerian calculus for the displacement convexity in the Wasserstein distance}. SIAM J. Math. Anal., 40(3):1104-1122, 2008.
\bibitem {DF} D.G. de Figueiredo, {\it Lectures on the Ekeland variational principle with applications and detours}, Tata Institute of Fundamental Research Lectures on Mathematics and Physics, 81. Published for the Tata Institute of Fundamental Research, Bombay; by Springer-Verlag, Berlin, (1989).
\bibitem {JKO} R. Jordan, D. Kinderlehrer and F. Otto, {\it The Variational Formulation of the Fokker-Plank Equation}, SIAM J. of Math. Anal. 29 (1998), 1-17.
\bibitem {DFF} M. Di Francesco and S. Fagioli, {\it Measure solutions for nonlocal interaction PDEs with two species}, 2013 Nonlinearity 26 2777.
\bibitem {DFM} M. Di Francesco and D. Matthes, {\it curves of steepest descent are entropy solutions for a class of degenerate convection-diffusion equations}, 2012 Cal. of Var. and PDEs, Vol. 50, Issue 1-2 , pp 199-230.
\bibitem {MC} R.-J. McCann, {\it A convexity principle for interacting gases}, Adv. Math. 128, 153-179 (1997).
\bibitem {MMCS} D. Matthes, R.-J. McCann and G. Savaré, {\it A family of nonlinear fourth order equations of gradient flow type}. Comm. Partial Differential Equations, 34(10-12):1352-1397, 2009.
\bibitem {O} F. Otto, {\it Doubly degenerate diffusion equations as steepest descent}, Manuscript (1996).
\bibitem {RS} R. Rossi and G. Savaré, {\it tightness, integral equicontinuity and compactness for evolution problem in Banach spaces}, Ann. Sc. Norm. Super. Pisa Cl. Sci. (5), 2(2):395-431, 2003.
\bibitem {S} F. Santambrogio, {\it Optimal Transport for Applied Mathematicians}, Birkäuser Verlag, Basel, 2015.
\bibitem {V1} C. Villani, {\it Topics in optimal transportation}, volume 58 of graduate Studies in Mathematics. American Mathematical Society, Providence, RI, 2003.
\bibitem {V2} C. Villani, {\it Optimal transport: Old and New}, Springer Verlag (Grundlehren der mathematischen Wissenschaften), 2008.
\end{thebibliography}
\end{document}